\documentclass{article}
\usepackage[english]{babel}
\usepackage[utf8]{inputenc}
\usepackage[T1]{fontenc}
\usepackage{amsmath,amssymb,amsfonts,amsopn,amsthm}
\usepackage[a4paper, total={6in, 8in}]{geometry}
\usepackage{mathtools}
\usepackage{algorithm}
\usepackage{algpseudocode}

\usepackage[dvipsnames]{xcolor}
\usepackage{graphicx,epstopdf} 
\usepackage[labelformat=simple]{subcaption}
\usepackage[format=plain,
textfont=it,
size=footnotesize,labelsep=period]{caption}            
\usepackage{tikz}
\usetikzlibrary{spy}
\usepackage{pgfplots}
\usepackage{siunitx,array,booktabs}
\usepackage{ifpdf} 
\usetikzlibrary{patterns}
\pgfdeclarelayer{foreground layer} 
\pgfsetlayers{main,foreground layer}

\usepackage{commath}
\usepackage{bm}
\usepackage{enumitem}
\usepackage[colorinlistoftodos,italian]{todonotes}
\usepackage[colorlinks = true,linkcolor = black,
citecolor = ForestGreen, anchorcolor = black]{hyperref}
\usepackage[capitalise,noabbrev]{cleveref}
\crefname{equation}{}{}
\Crefname{equation}{Equation}{Equations}
\usepackage{autonum}
\usepackage{calligra}
\usepackage{csvsimple}
\usepackage{import}
\usepackage{authblk}



\synctex=1

\pgfplotsset{compat=1.11}

\newcommand{\plotlinewidth}{1.0}

\newcommand{\plotmarksizeu}{2.5}

\newcommand{\plotimscale}{0.98}

\newcommand{\plotimsized}{0.9}

\newcommand{\aspectratio}{0.66}
\newcommand{\subfigsize}{0.49}

\newcommand{\subfigsizesm}{0.32}
\newcommand{\legendfontscale}{1}
\newcommand{\legendmarkscale}{1}

\pgfplotsset{select coords between index/.style 2 args={
		x filter/.code={
			\ifnum\coordindex<#1\fi
			\ifnum\coordindex>#2\fi
		}
}}


\numberwithin{equation}{section}
\newtheorem{theorem}{Theorem}[section]

\newtheorem{lemma}[theorem]{Lemma}
\theoremstyle{definition}

\newtheorem{assumption}[theorem]{Assumption}
\theoremstyle{remark}

\crefname{assumption}{Assumption}{Assumptions}
\crefname{remark}{Remark}{Remarks}
\crefname{example}{Example}{Examples}

\DeclareMathAlphabet{\mathcalligra}{T1}{calligra}{m}{n}
\DeclareFontShape{T1}{calligra}{m}{n}{<->s*[2.2]callig15}{}
\newcommand{\ar}{\mathcalligra{r}}

\newcommand{\Nb}{\mathbb{N}}
\newcommand{\Rb}{\mathbb{R}}
\newcommand{\Pb}{\mathbb{P}}

\newcommand{\Rd}{\mathbb{R}^{d}}


\newcommand{\bx}{\bm{x}}
\newcommand{\by}{\bm{y}}
\newcommand{\bv}{\bm{v}}
\newcommand{\bbeta}{\bm{\beta}}

\newcommand{\Ok}{\Omega_k}



\DeclareMathOperator{\divop}{div}
\newcommand{\OpB}{\mathcal{B}}
\newcommand{\OpI}{\mathcal{I}}

\newcommand{\nld}[1]{\Vert #1\Vert}

\newcommand{\nLd}[1]{\Vert #1 \Vert_{L^2(\Omega)}}
\newcommand{\nLdd}[1]{\Vert #1\Vert_{L^2(\Omega)^d}}
\newcommand{\nLds}[1]{\Vert #1\Vert_{L^2(\sigma)}}
\newcommand{\nLus}[1]{\Vert #1\Vert_{L^1(\sigma)}}
\newcommand{\nLdK}[1]{\Vert #1\Vert_{L^2(K)}}
\newcommand{\nLddK}[1]{\Vert #1\Vert_{L^2(K)^d}}

\newcommand{\nB}[1]{\vert\hspace{-0.25ex}\vert\hspace{-0.25ex}\vert #1\vert\hspace{-0.25ex}\vert\hspace{-0.25ex}\vert}
\newcommand{\nBp}[1]{{\nB{#1}}_{\oplus}}

\NewDocumentCommand\nLdk{mg}{\Vert #1\Vert_{L^2(\Omega_\IfNoValueTF{#2}{k}{#2})}}
\NewDocumentCommand\nLddk{mg}{\Vert #1\Vert_{L^2(\Omega_\IfNoValueTF{#2}{k}{#2})^d}}

\newcommand{\F}{\mathcal{F}}
\newcommand{\Fb}{\mathcal{F}_{b}}
\newcommand{\Fi}{\mathcal{F}_{i}}
\newcommand{\FK}{\mathcal{F}_K}
\newcommand{\Fk}{\mathcal{F}_k}
\newcommand{\Fkb}{\mathcal{F}_{k,b}}
\newcommand{\Fki}{\mathcal{F}_{k,i}}
\newcommand{\hFk}{\widehat{\mathcal{F}}_k}
\newcommand{\hFki}{\widehat{\mathcal{F}}_{k,i}}

\newcommand{\M}{\mathcal{M}}
\newcommand{\Mk}{\mathcal{M}_k}
\newcommand{\VT}{V(\mathfrak{T})}

\newcommand{\jump}[1]{[\![#1]\!]}
\newcommand{\mean}[1]{\{\!\!\{#1\}\!\!\}}

\newcommand{\ns}{\bm{n}_\sigma}
\newcommand{\hMk}{\widehat{\mathcal{M}}_k}

\newcommand{\bt}{\bm{t}}
\newcommand{\bq}{\bm{q}}
\newcommand{\br}{\bm{r}}
\newcommand{\btk}{\bt_k}
\newcommand{\bqk}{\bq_k}
\newcommand{\hbtk}{\hat{\bt}_k}
\newcommand{\hbqk}{\hat{\bq}_k}
\newcommand{\hbtj}{\hat{\bt}_j}
\newcommand{\hbqj}{\hat{\bq}_j}
\newcommand{\hbrk}{\hat{\br}_k}
\newcommand{\huk}{\hat{u}_k}

\newcommand{\email}[1]{\href{#1}{#1}}



\title{A local adaptive discontinuous Galerkin method for convection-diffusion-reaction equations}


\author[1]{Assyr Abdulle}
\author[1]{Giacomo Rosilho de Souza\thanks{Corresponding author. E-mail address: \email{giacomo.rosilhodesouza@epfl.ch}.}}
\affil[1]{ANMC, Institute of Mathematics, École Polytechnique Fédérale de Lausanne, 1015 Lausanne, Switzerland}

\begin{document}

\maketitle


\begin{abstract}
We introduce a local adaptive discontinuous Galerkin method for convection-diffusion-reaction equations. The proposed method is based on a coarse grid and iteratively improves the solution's accuracy by solving local elliptic problems in refined subdomains. For purely diffusion problems, we already proved that this scheme converges under minimal regularity assumptions [A.~Abdulle and G.~{Rosilho de Souza}, {\em ESAIM: M2AN}, 53(4):1269--1303, 2019]. In this paper, we provide an algorithm for the automatic identification of the local elliptic problems' subdomains employing a flux reconstruction strategy. Reliable error estimators are derived for the local adaptive method. Numerical comparisons with a classical nonlocal adaptive algorithm illustrate the efficiency of the method.
\end{abstract}
\textbf{Key words.} elliptic equation, local scheme, discontinuous Galerkin, a posteriori error estimators\\
\textbf{AMS subject classifications.} 65N15, 65N30.

\section{Introduction} \label{sec:intro}
Solutions to partial differential equations that exhibit singularity (e.g. cracks) or high variations in the computational domain are usually approximated by adaptive numerical methods. There is nowadays a large body of literature concerned with the development of reliable a posteriori error estimators aiming for mesh refinement in regions of large errors (see e.g. \cite{BaR78a, BaR78b, BaW85, Ver96a}). However, classical adaptive methods are usually based on iterative processes which rely on recomputing the solution on the whole computational domain for each new mesh obtained after a refinement procedure. 

In this paper we present a scheme which solves local problems defined on refined regions only. Local schemes have been proposed in the past, we mention the Local Defect Correction (LDC) method \cite{Hac84}, the Fast Adaptive Composite (FAC) grid algorithm \cite{McT86} and the Multi-Level Adaptive (MLA) technique \cite{Bra77}. At each iteration, these algorithms solve a problem on a coarse mesh on the whole domain and a local problem on a finer mesh. The coarse solution is used for artificial boundary conditions while the local solution is used to correct the residual in the coarse grid. In \cite{BRL15} the LDC scheme has been coupled with error estimators, which are used to select the local domain. 

In \cite{AbR19} we proposed a Local Discontinuous Galerkin Gradient Discretization (LDGGD) method which decomposes the computational domain in local subdomains encompassing the large gradient regions. This scheme iteratively improves a coarse solution on the full domain by solving local elliptic problems on finer meshes. 
Hence, the full problem is solved only in the first iteration on a coarse mesh while a sequence of solutions on smaller subdomains are subsequently computed. In turn iterations between subdomains are not needed as in the LDC, FAC or MLA schemes and the condition number of the small systems are considerably smaller than the one of large systems (which describe data and mesh variations on the whole domain). The LDGGD method has been shown to converge under minimal regularity assumptions, i.e. when the solution is in $H^1_0(\Omega)$ and the forcing term in $H^{-1}(\Omega)$ \cite{AbR19}. 
However, the marking of the subdomains this scheme did so far rely on the a priori knowledge of the location of high gradient regions. 

The main contribution of this paper is to propose an adaptive local LDGGD method. This adaptive method is based on a posteriori error estimators that automatically identify the subdomains to be refined. 
This is crucial for practical applications of the method. The LDGGD relies on the symmetric weighted interior penalty Galerkin (SWIPG) method \cite{PiE12,ESZ09} and we consider linear advection-diffusion-reaction equations
\begin{equation}\label{eq:elliptic}
	\begin{aligned}
	-\nabla\cdot(A\nabla u)+\bbeta\cdot\nabla u+\mu u &=f\qquad   && \text{in } \Omega, \\
	u&=0 &&  \text{in } \partial \Omega, 
	\end{aligned}
\end{equation}
where $\Omega$ is an open bounded polytopal connected subset of $\Rd$ for $d\geq 2$, $A$ is the diffusion tensor, $\bbeta$ the velocity field, $\mu$ the reaction coefficient and $f$ a forcing term. 
In \cite{ESV10} the authors introduce a posteriori error estimators for the SWIPG scheme based on cutoff functions and conforming flux and potential reconstructions, these estimators are shown to be efficient and robust in singularly perturbed regimes. Following the same strategy, we derive estimators for the local scheme by weakening the regularity requirements on the reconstructed fluxes. The new estimators are as well free of unknown constants and their robustness is verified numerically. Furthermore, they are employed to define the local subdomains and provide error bounds on the numerical solution of the LDGGD method.
We prove that the error estimators are reliable. Because of the local nature of our scheme, we introduce two new estimators that measure the jumps at the boundaries of the local domains. However, these two new terms have lower convergence rate than the other terms and we cannot establish the efficiency of our a posteriori estimators with our current approach.
Nevertheless, the two new terms are useful in our algorithm: whenever the errors are localized these new terms become negligible; in contrast, when these estimators dominate it is an indication that the error is not localized and one can switch to a nonlocal method. Other boundary conditions than those of \cref{eq:elliptic} can be considered, at the cost of modifying the error estimators introduced in \cite{ESV10}. The new estimators introduced here need no changes.

The outline of the paper is as follows. In \cref{sec:ladg} we describe the local scheme, in \cref{sec:err_flux} we introduce the error estimators and state the main a posteriori error analysis results. \Cref{sec:errbound} is dedicated to the definition of the reconstructed fluxes and proofs of the main results. Finally, various numerical examples illustrating the efficiency, versatility and limits of the proposed method are presented in \cref{sec:num}.

\section{Local adaptive discontinuous Galerkin method}\label{sec:ladg}
In this section we introduce the local algorithm based on the discontinuous Galerkin method. 
We start by some assumptions on the data and the domain, before introducing the weak form corresponding to \eqref{eq:elliptic}.
We assume that $\Omega\subset\Rd$ is a polytopal domain with $d\geq 2$, $\bbeta\in W^{1,\infty}(\Omega)^d$, $\mu\in L^\infty(\Omega)$ and $A\in L^\infty(\Omega)^{d\times d}$, with $A(\bx)$ a symmetric piecewise constant matrix with eigenvalues in $[\underline\lambda,\overline \lambda]$, where $\overline\lambda\geq\underline\lambda>0$. Moreover, we assume that $\mu-\frac{1}{2}\nabla\cdot\bbeta\geq 0$ a.e. in $\Omega$. This term $\mu-\frac{1}{2}\nabla\cdot\bbeta$ appears in the symmetric part of the operator $\OpB(\cdot,\cdot)$ defined in \eqref{eq:bform} and hence the assumption $\mu-\frac{1}{2}\nabla\cdot\bbeta\geq 0$ is needed for coercivity. Finally, we set $f\in L^2(\Omega)$.
Under these assumptions, the unique weak solution $u\in H^1_0(\Omega)$ of \eqref{eq:elliptic} satisfies
\begin{equation}\label{eq:weak}
\OpB(u,v)=\int_\Omega fv\dif \bx\qquad \text{for all }v\in H^1_0(\Omega),
\end{equation} 
where
\begin{equation}\label{eq:bform}
\OpB(u,v)= \int_\Omega (A\nabla u\cdot\nabla v+(\bbeta\cdot\nabla u) v+\mu u v)\dif\bx.
\end{equation}

\subsection{Preliminary definitions}\label{sec:prel}
We start by collecting some notations related to the geometry and the mesh of the subdomains, before recalling the definition of the discontinuous Galerkin finite element method.

\subsubsection*{Subdomains and meshes}
Let $M\in\Nb$ and $\{\Omega_k\}_{k=1}^M$ be a sequence of open subdomains of $\Omega$ with $\Omega_1=\Omega$. The domains $\Omega_k$ for $k\geq 2$ can be any polytopal subset of $\Omega$, in practice they will be chosen by the error estimators (see \cref{sec:localg}). We consider $\{\Mk\}_{k=1}^M$ a sequence of simplicial meshes on $\Omega$ and $\Fk=\Fkb\cup\Fki$ is the set of boundary and internal faces of $\Mk$. The assumption below ensures that $\M_{k+1}$ is a refinement of $\Mk$ inside the subdomain $\Omega_{k+1}$.
\begin{assumption}\label{ass:mesh}
	$\phantom{=}$
	\begin{enumerate}
		\item For each $k=1,\ldots,M$, $\overline{\Omega}_{k}=\cup_{K\in\Mk,\,K\subset\Ok} \overline{K}$.
		\item For $k=1,\ldots,M-1$,
		\begin{enumerate}[label=\alph*)]
			\item $\{K\in\mathcal{M}_{k+1}\,:\, K\subset \Omega\setminus\Omega_{k+1}\}=\{K\in\Mk\,:\, K\subset \Omega\setminus\Omega_{k+1}\}$,
			\item if $K,T\in \Mk$ with $K\subset \Omega_{k+1}$, $T\subset\Omega\setminus\Omega_{k+1}$ and $\partial K\cap\partial T\neq\emptyset$ then $K\in \M_{k+1}$,
			\item if $K\in\Mk$ and $K\subset \Omega_{k+1}$, either $K\in \M_{k+1}$ or $K$ is a union of elements in $\M_{k+1}$.
		\end{enumerate}
	\end{enumerate}
\end{assumption}

Let $\widehat{\M}_k=\{K\in\Mk\,:\, K\subset \Ok\}$ and $\widehat{\mathcal{F}}_k=\widehat{\mathcal{F}}_{k,b}\cup \widehat{\mathcal{F}}_{k,i}$ the set of faces of $\widehat{\M}_k$, with $\widehat{\mathcal{F}}_{k,b}$ and $\widehat{\mathcal{F}}_{k,i}$ the boundary and internal faces, respectively. Condition 1 in \cref{ass:mesh} ensures that $\hMk$ is a simplicial mesh on $\Ok$. Condition 2 guarantees that in $\Omega\setminus\Omega_{k+1}$ and in the neighborhood of $\partial\Omega_{k+1}\setminus\partial\Omega$ the meshes $\Mk$ and $\M_{k+1}$ are equal and that $\M_{k+1}$ is a refinement of $\Mk$ in $\Omega_{k+1}$. An example of domains and meshes satisfying \cref{ass:mesh} is illustrated in \cref{fig:illustrationmesh}.

\begin{figure}
	\begin{center}
		\begin{tikzpicture}[xscale=1]
		\draw[step=1.0,black, thin] (0,0) grid (12,4);
		\foreach \x in {0,1,2,3,4,5,6,7,8}
			\draw[thin] (\x,0)--(\x+4,4);
		\draw[thin] (0,3)--(1,4);
		\draw[thin] (0,2)--(2,4);
		\draw[thin] (0,1)--(3,4);
		\draw[thin] (9,0)--(12,3);
		\draw[thin] (10,0)--(12,2);
		\draw[thin] (11,0)--(12,1);
		\draw[ultra thick] (0,0)--(12,0)--(12,4)--(0,4)--(0,0);
		\draw[ultra thick] (2,1)--(10,1)--(10,4)--(2,4)--(2,1);
		\draw[ultra thick] (4,2)--(8,2)--(8,4)--(4,4)--(4,2);
		
		\draw[step=0.5,black, thin] (3,2) grid (9,4);
		\draw[thin] (3,3.5)--(3.5,4);
		\draw[thin] (3,2.5)--(4.5,4);
		\draw[thin] (4.5,2.5)--(6,4);
		\draw[thin] (4.5,2.0)--(6.5,4);
		\draw[thin] (5.5,2.5)--(7,4);
		\draw[thin] (5.5,2.0)--(7.5,4);
		\draw[thin] (6.5,2.5)--(7.5,3.5);
		\draw[thin] (3.5,2.0)--(5.5,4);
		\draw[thin] (4.5,3.5)--(5.0,4);
		\draw[thin] (6.5,2.0)--(8.5,4.0);
		\draw[thin] (7.5,2.0)--(9,3.5);
		\draw[thin] (8.5,2.0)--(9,2.5);
		
		\foreach \x in {4.5,5,...,7}
		\foreach \y in {3,3.5,4}
		\draw[thin] (\x,\y)--(\x+0.5,\y-0.5);

		\node[align=right,fill=white] at (12.5,1.5) {\Large{$\Omega_1$}};
		\node[align=right,fill=white] at (10.5,2.5) {\Large{$\Omega_2$}};
		\node[align=right,fill=white] at (7.5,1.5) {\Large{$\Omega_3$}};
		\end{tikzpicture}
	\end{center}
	\caption{Example of possible meshes for three embedded domains $\Omega_1$, $\Omega_2$, $\Omega_3$.}
	\label{fig:illustrationmesh}
\end{figure}
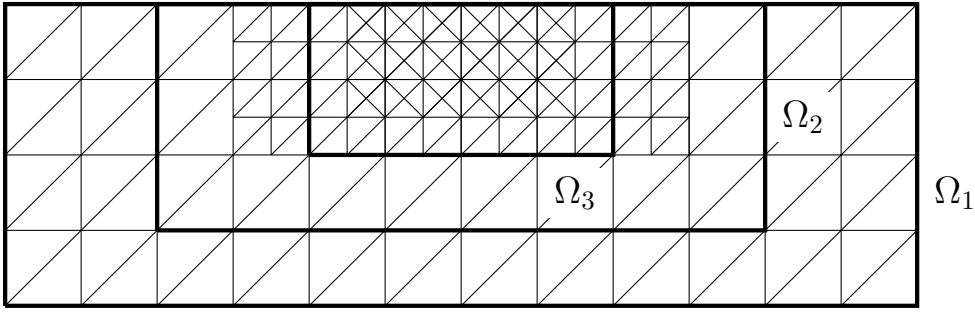

\subsubsection*{Discontinuous Galerkin finite element method}
The local adaptive discontinuous Galerkin method will solve local elliptic problems in $\Ok$ by using a discontinuous Galerkin scheme introduced in \cite{ESZ09}, which we recall here. 
In what follows, $\mathfrak{T}=(D,\M,\mathcal{F})$ denotes a tuple defined by a domain $D$, a simplicial mesh $\M$ on $D$ and its set of faces $\mathcal{F}=\mathcal{F}_b\cup\mathcal{F}_i$. In practice we will consider $\mathfrak{T}_k=(\Omega,\Mk,\mathcal{F}_k)$ or $\widehat{\mathfrak{T}}_k=(\Ok,\hMk,\hFk)$. For $\mathfrak{T}=(D,\M,\mathcal{F})$ we define 
\begin{equation}\label{eq:defVT}
V(\mathfrak{T}) = \{v\in L^2(D)\,:\, v|_K\in \Pb_\ell(K),\,\forall K\in\M\},
\end{equation}
where $\Pb_\ell(K)$ is the set of polynomials in $K$ of total degree $\ell$. As usual for such discontinuous Galerkin methods we need to define appropriate averages, jumps, weights and penalization parameters. For $K\in\M$ we denote $\bm{n}_K$ the unit normal outward to $K$ and $\FK=\{\sigma\in\F\,:\,\sigma\subset\partial K\}$. Let $\sigma\in\Fi$ and $K,T\in\M$ with $\sigma=\partial K\cap\partial T$, then $\ns=\bm{n}_K$ and
\begin{equation}
\delta_{K,\sigma}=\ns^\top A|_K\ns, \qquad\qquad \delta_{T,\sigma}=\ns^\top A|_T\ns.
\end{equation}
The weights are defined by
\begin{equation}
\omega_{K,\sigma}=\frac{\delta_{T,\sigma}}{\delta_{K,\sigma}+\delta_{T,\sigma}}, \qquad\qquad \omega_{T,\sigma}=\frac{\delta_{K,\sigma}}{\delta_{K,\sigma}+\delta_{T,\sigma}}
\end{equation}
and the penalization parameters by
\begin{equation}
\gamma_\sigma=2\frac{\delta_{K,\sigma}\delta_{T,\sigma}}{\delta_{K,\sigma}+\delta_{T,\sigma}}, \qquad\qquad \nu_\sigma=\frac{1}{2}|\bbeta\cdot \bm{n}_\sigma|.
\end{equation}
If $\sigma\in\Fb$ and $K\in\M$ with $\sigma=\partial K\cap \partial D$ then $\ns$ is $\bm{n}_D$ the unit outward normal to $\partial D$ and
\begin{equation}
\delta_{K,\sigma}=\ns^\top A|_K\ns, \qquad \omega_{K,\sigma}=1, \qquad \gamma_\sigma=\delta_{K,\sigma}, \qquad \nu_\sigma=\frac{1}{2}|\bbeta\cdot \bm{n}_\sigma|.
\end{equation}
Let $g\in L^2(\partial D)$, we define the averages and jumps of $v\in\VT$ as follows.
For $\sigma\in\Fb$ with $\sigma=\partial K\cap\partial D$ we set
\begin{equation}
\mean{v}_{\omega,\sigma}=v|_K, \qquad\qquad \mean{v}_{g,\sigma}=\frac{1}{2}(v|_K+g), \qquad\qquad \jump{v}_{g,\sigma}=v|_K-g
\end{equation}
and for $\sigma\in\Fi$ with $\sigma=\partial K\cap\partial T$
\begin{equation}
\mean{v}_{\omega,\sigma}=\omega_{K,\sigma}v|_K+\omega_{T,\sigma}v|_T, \qquad\qquad
\mean{v}_{g,\sigma}=\frac{1}{2}(v|_K+v|_T ), \qquad\qquad
\jump{v}_{g,\sigma} = v|_K-v|_T.
\end{equation}
We define $\jump{\cdot}_{\sigma}= \jump{\cdot}_{0,\sigma}$ and $\mean{\cdot}_{\sigma}= \mean{\cdot}_{0,\sigma}$. A similar notation holds for vector valued functions and whenever no confusion can arise the subscript $\sigma$ is omitted. Let $h_\sigma$ be the diameter of $\sigma$ and $\eta_\sigma>0$ a user parameter, for $u,v\in\VT$ we define the bilinear form
\begin{align}\label{eq:opB}
\begin{split}
\OpB(u,v,\mathfrak{T},g)&=
\int_{D} (A\nabla u\cdot \nabla v +(\mu-\nabla\cdot \bbeta)u v-u\bbeta\cdot \nabla v)\dif\bx\\
&\quad-\sum_{\sigma\in\F}\int_\sigma(\jump{v}\mean{A\nabla u}_{\omega}\cdot \ns+\jump{u}_{g}\mean{A\nabla v}_{\omega}\cdot \ns)\dif\by\\
&\quad+\sum_{\sigma\in\F}\int_\sigma ((\eta_\sigma\frac{\gamma_\sigma}{h_\sigma}+\nu_\sigma)\jump{u}_{g}\jump{v}+\bbeta\cdot\ns\mean{u}_{g}\jump{v})\dif\by,
\end{split}
\end{align}
where the gradients are taken element wise. The bilinear form $\OpB(\cdot,\cdot,\mathfrak{T},g)$ will be used to approximate elliptic problems in $D$ with Dirichlet boundary condition $g$. This scheme is known as the Symmetric Weighted Interior Penalty (SWIP) scheme \cite{ESZ09}. The SWIP method is an improvement of the Interior Penalty scheme (IP) \cite{Arn82}, where the weights are defined as $\omega_{K,\sigma}=\omega_{T,\sigma}=1/2$. The use of diffusivity-dependent averages increases the robustness of the method for problems with strong diffusion discontinuities. The bilinear form defined in \cref{eq:opB} is mathematically equivalent to other formulations where $v\bbeta\cdot\nabla u$ or $\nabla\cdot(\bbeta u)v$ appear instead of $u\bbeta\cdot\nabla v$ (see \cite{ESZ09} and \cite[Section 4.6.2]{PiE12}). Our choice of formulation is convenient to express local conservation laws (see \cite[Section 2.2.3]{PiE12}).

\subsection{Local method algorithm}\label{sec:localg}
In this section we present the local scheme. In order to facilitate the comprehension of the method, we start with an informal description and then provide a pseudo-code for the algorithm. We denote $u_k$ the global solutions on $\Omega$ and $\huk$ the local solutions on $\Ok$, which are used to correct the global solutions.

Given a discretization $\mathfrak{T}_1=(\Omega,\M_1,\F_1)$ on $\Omega$ the local scheme computes a first approximate solution $u_1\in V(\mathfrak{T}_1)$ to \eqref{eq:weak}. The algorithm then performs the following steps for $k=2,\ldots,M$.
\begin{enumerate}[label=\roman*)]
	\item Given the current solution $u_{k-1}$, identify the region $\Omega_k$ where the error is large and define a new refined mesh $\Mk$ satisfying \cref{ass:mesh} by iterating the following steps.
	\begin{enumerate}[label=\alph*)]
		\item For each element $K\in\mathcal{M}_{k-1}$ compute an error indicator $\eta_{M,K}$ (defined in \eqref{eq:marketa}) and mark the local domain $\Omega_{k}$ using the fixed energy fraction marking strategy \cite[Section 4.2]{Dor96}. Hence, $\Omega_{k}$ is defined as the union of the elements with largest error indicator $\eta_{M,K}$ and it is such that the error committed inside of $\Omega_{k}$ is at least a prescribed fraction of the total error.
		\item Define the new mesh ${\M}_{k}$ by refining the elements $K\in\M_{k-1}$ with $K\subset\Omega_{k}$.
		\item Enlarge the local domain $\Omega_{k}$ defined at step a) by adding a one element wide boundary layer (i.e. in order to satisfy item 2b of \cref{ass:mesh}).
		\item Define the local mesh $\widehat{\mathcal{M}}_{k}$ by the elements of $\mathcal{M}_{k}$ inside of $\Omega_{k}$.
	\end{enumerate}
	\item Solve a local elliptic problem in $\Omega_k$ on the refined mesh $\hMk$ using $u_{k-1}$ as artificial Dirichlet boundary condition on $\partial\Ok\setminus\partial\Omega$. The solution is denoted $\huk\in V(\widehat{\mathfrak{T}}_k)$, where $\widehat{\mathfrak{T}}_k=(\Ok,\hMk,\hFk)$.
	\item The local solution $\huk$ is used to correct the previous solution $u_{k-1}$ inside of $\Ok$ and obtain the new global solution $u_k$.
\end{enumerate}
The pseudo-code of the local scheme is given in \cref{alg:local}, where $\chi_{\Omega\setminus\Ok}$ is the indicator function of $\Omega\setminus\Ok$ and $(\cdot,\cdot)_k$ is the inner product in $L^2(\Omega_k)$. The function $\text{LocalDomain}(u_k,\mathfrak{T}_k)$ used in \cref{alg:local} performs steps a)-d) of i). For purely diffusive problems, it is shown in \cite[Theorem 8.2]{Ros20} that \cref{alg:local} is equivalent to the LDGGD introduced in \cite{AbR19}, hence the scheme convergences for exact solutions $u\in H^1_0(\Omega)$.

\begin{algorithm}
	\caption{LocalScheme($\mathfrak{T}_1$)}
	\label{alg:local}
	\begin{algorithmic}
		\State Find $u_1\in V(\mathfrak{T}_1)$ solution to $\OpB(u_1,v_1,\mathfrak{T}_1,0)=(f,v_1)_1$ for all $v_1\in V(\mathfrak{T}_1)$.
		\For{$k=2,\ldots,M$}
		\State $(\mathfrak{T}_k,\widehat{\mathfrak{T}}_{k}) = \text{LocalDomain}(u_{k-1},\mathfrak{T}_{k-1})$.
		\State $g_k=u_{k-1}\chi_{\Omega\setminus\Ok}\in V(\mathfrak{T}_k)$.
		\State Find $\huk\in V(\widehat{\mathfrak{T}}_k)$ solution to $\OpB(\huk,v_k,\widehat{\mathfrak{T}}_k,g_k)=(f,v_k)_k$ for all $v_k\in V(\widehat{\mathfrak{T}}_k)$.
		\State $u_k=g_k+\huk\in V(\mathfrak{T}_k)$.
		\EndFor
	\end{algorithmic}
\end{algorithm}

\section{Error estimators via flux and potential reconstructions}\label{sec:err_flux}
The error estimators used to mark the local domains $\Omega_k$ and to provide error bounds on the numerical solution $u_k$ are introduced here.

In the framework of selfadjoint elliptic problems, the equilibrated fluxes method \cite{AiO93,BaW85} is a technique largely used to derive a posteriori error estimators free of undetermined constants and is based on the definition of local fluxes which satisfy a local conservation property. Since local fluxes and conservation properties are intrinsic to the discontinuous Galerkin formulation, this discretization is well suited for the equilibrated fluxes method \cite{Ain05,CoN08}. In \cite{ENV07,Kim07} the Raviart-Thomas-Nédélec space is used to build an $H_{\divop}(\Omega)$ conforming reconstruction $\bt_h$ of the discrete diffusive flux $-A\nabla u_h$. A diffusive flux $\bt_h$ with optimal divergence, in the sense that it coincides with the orthogonal projection of the right-hand side $f$ onto the discontinuous Galerkin space, is obtained. In \cite{ESV10} the authors extend this approach to convection-diffusion-reaction equations by defining an $H_{\divop}(\Omega)$ conforming convective flux $\bq_h$ approximating $\bbeta u_h$ and satisfying a conservation property.

We follow a similar strategy and define in the next section error estimators in function of diffusive and convective fluxes reconstructions $\btk,\bqk$ for the local scheme, as well as an $H^1_0(\Omega)$ conforming potential reconstruction $s_k$ of the solution $u_k$.

\subsection{Definition of the error estimators}\label{sec:errest}
The error estimators in function of the potential reconstruction $s_k$ approximating the solution $u_k$, the diffusive and convective fluxes $\btk$ and $\bqk$ approximating $-A\nabla u_k$ and $\bbeta u_k$, respectively, are defined in this section.

Following the iterative and local nature of our scheme, we define the diffusive and convective fluxes reconstructions as
\begin{equation}\label{eq:defflux}
\btk=\bt_{k-1}\chi_{\Omega\setminus\Ok}+\hbtk,\qquad \bqk=\bq_{k-1}\chi_{\Omega\setminus\Ok}+\hbqk,
\end{equation}
where $\bt_0=\bq_0=0$ and $\hbtk$, $\hbqk$ are $H_{\divop}(\Ok)$ conforming fluxes reconstructions of $-A\nabla \huk$, $\bbeta \huk$, respectively, and where $\huk$ is the local solution. To avoid any abuse of notation in \cref{eq:defflux}, we extended $\hbtk$, $\hbqk$ to zero outside of $\Ok$.
The fluxes reconstructions $\hbtk$, $\hbqk$ satisfy a local conservation property and are defined in \cref{sec:potflux}. We readily see that \cref{eq:defflux} allows for flux jumps at the subdomains boundaries, while giving enough freedom to define $\hbtk,\hbqk$ in a way that a conservation property is satisfied. The fluxes reconstructions are used to measure the non conformity of the numerical fluxes. In the same spirit we define a potential reconstruction $s_k\in H^1_0(\Omega)$ used to measure the non conformity of the numerical solution. It is defined recursively as
\begin{equation}\label{eq:defpot}
s_k = s_{k-1}\chi_{\Omega\setminus\Ok}+\hat s_k,
\end{equation}
where $s_0=0$ and $\hat s_k\in H^1(\Ok)$ is such that $s_k\in H^1_0(\Omega)$; similarly, we extend $\hat s_k$ to zero outside of $\Ok$. More details about the definitions of $\hbtk$, $\hbqk$ and $\hat s_k$ will be given in \cref{sec:potflux}, for the time being we will define the error estimators.

Let $K\in\Mk$, $v\in H^1(K)$, 
\begin{equation}\label{eq:defnBK}
\nB{v}_K^2=\nLddK{A^{1/2}\nabla v}^2+\nLdK{(\mu-\frac{1}{2}\nabla\cdot\bbeta)^{1/2}v}^2,
\end{equation}
where $\nLdK{{\cdot}}$ is the $L^2$-norm for scalar-valued functions in $K$ and $\nLddK{{\cdot}}$ the $L^2$-norm for vector-valued functions in $K$. The non conformity of the numerical solution $u_k$ is measured by the estimator
\begin{subequations}
	\begin{equation}\label{eq:etaNC}
	\eta_{NC,K}= \nB{u_k-s_k}_K.
	\end{equation}
	In the following, $m_K$, $\tilde m_K$, $m_\sigma$, $D_{t,K,\sigma}$, $c_{\bbeta,\mu,K}>0$ are some known constants which will be defined in \cref{sec:ctedef}.
	The residual estimator is
	\begin{equation}\label{eq:etaR}
	\eta_{R,K}= m_K \nLdK{f-\nabla\cdot\btk-\nabla\cdot\bqk-(\mu-\nabla\cdot\bbeta)u_k},
	\end{equation}
	which can be seen as the residual of \eqref{eq:weak} where we first replace $u$ by $u_k$, then $-A\nabla u_k$ by $\btk$, $\bbeta u_k$ by $\bqk$ and finally use the Green theorem. The error estimators defined in \cref{eq:etaDF,eq:etaC1,eq:etaC2,eq:etaU,eq:etaG1,eq:etaG2,eq:etatC1,eq:etatU} measure the error introduced by these substitutions and the error introduced when applying the Green theorem to $\btk,\bqk$, which are not in $H_{\divop}(\Omega)$.
	
	The diffusive flux estimator measures the difference between  $-A\nabla u_k$ and $\btk$. It is given by $\eta_{DF,K}=\min\lbrace \eta_{DF,K}^1,\eta_{DF,K}^2\rbrace$, where
	\begin{equation}\label{eq:etaDF}
	\begin{aligned}
	\eta_{DF,K}^1 &= \nLddK{A^{1/2}\nabla u_k+A^{-1/2}\btk},\\
	\eta_{DF,K}^2 &= m_K\nLdK{(\OpI-\pi_0)(\nabla\cdot(A\nabla u_k+\btk))}\\
	&\quad +\tilde{m}_K^{1/2}\sum_{\sigma\in \mathcal{F}_K}C_{t,K,\sigma}^{1/2}\nLds{(A\nabla u_k+\btk)\cdot\bm{n}_\sigma},
	\end{aligned}
	\end{equation}
	$\pi_0$ is the $L^2$-orthogonal projector onto $\mathbb{P}_0(K)$ and $\OpI$ is the identity operator. Let $\sigma\in\mathcal{F}_k$ and $\pi_{0,\sigma}$ be the $L^2$-orthogonal projector onto $\mathbb{P}_0(\sigma)$. The convection and upwinding estimators, that measure the difference between $\bbeta u_k$, $\bbeta s_k$ and $\bqk$, are defined by
	\begin{align}\label{eq:etaC1}
	\eta_{C,1,K}&= m_K\nLdK{(\OpI-\pi_0)(\nabla\cdot(\bqk-\bbeta s_k))},\\ \label{eq:etaC2}
	\eta_{C,2,K}&= \frac{1}{2}c_{\bbeta,\mu,K}^{-1/2}\nLdK{(\nabla\cdot\bbeta)(u_k-s_k))},\\ \label{eq:etatC1}
	\tilde \eta_{C,1,K}&= m_K\nLdK{(\OpI-\pi_0)(\nabla\cdot (\bqk-\bbeta u_k))},\\ \label{eq:etaU}
	\eta_{U,K} &= \sum_{\sigma\in\mathcal{F}_K}\chi_\sigma m_\sigma\nLds{\pi_{0,\sigma}\mean{\bqk-\bbeta s_k}\cdot \bm{n}_\sigma},\\ \label{eq:etatU}
	\tilde\eta_{U,K}&= \sum_{\sigma\in\mathcal{F}_K}\chi_\sigma m_\sigma\nLds{\pi_{0,\sigma}\mean{\bqk-\bbeta u_k}\cdot \bm{n}_\sigma},
	\end{align}
	where $\chi_\sigma=2$ if $\sigma\in\Fkb$ and $\chi_\sigma=1$ if $\sigma\in\Fki$.
	Finally, we introduce the jump estimators coming from the application of the Green theorem to $\btk$ and $\bqk$ (see \cref{lemma:boundBBA}). Those are defined by 
	\begin{align}\label{eq:etaG1}
	\eta_{\Gamma,1,K} &= \frac{1}{2}(|K|c_{\bbeta,\mu,K})^{-1/2}\sum_{\sigma\in\mathcal{F}_K\cap\Fki}\nLus{\pi_{0,\sigma}\jump{\bqk}\cdot\bm{n}_\sigma},\\ \label{eq:etaG2}
	\eta_{\Gamma,2,K} &= \frac{1}{2}\sum_{\sigma\in\mathcal{F}_K\cap\Fki} D_{t,K,\sigma}\nLds{\jump{\btk}\cdot \bm{n}_\sigma}.
	\end{align}
\end{subequations}
We end the section defining the marking error estimator $\eta_{M,K}$ used to mark $\Ok$ in the LocalDomain routine of \cref{alg:local}, let
\begin{equation}\label{eq:marketa}
\begin{aligned}
\eta_{M,K}&= \eta_{NC,K}+\eta_{R,K}+\eta_{DF,K}+\eta_{C,1,K}+\eta_{C,2,K}+\eta_{U,K}\\
&\quad +\eta_{\Gamma,1,K}+\eta_{\Gamma,2,K}+\tilde\eta_{C,1,K}+\tilde\eta_{U,K}.
\end{aligned}
\end{equation}

\subsection{Main results}\label{sec:thms}
We state here our main results related to the a posteriori analysis of the local scheme, in particular we will provide reliable error bounds on the numerical solution $u_k$ which are free of undetermined constants. We will also comment as to why we cannot prove the efficiency of the new estimator.

We start defining the norms for which we provide the error bounds, the same norms are used in \cite{ESV10}. The operator $\OpB$ defined in \eqref{eq:bform} can be written $\OpB=\OpB_S+\OpB_A$, where $\OpB_S$ and $\OpB_A$ are symmetric and skew-symmetric operators defined by
\begin{equation}\label{eq:bsba}
\begin{aligned}
\OpB_S(u,v)&= \int_\Omega (A\nabla u\cdot\nabla v+(\mu-\frac{1}{2}\nabla\cdot\bbeta)u v)\dif\bx,\\
\OpB_A(u,v)&=\int_{\Omega}(\bbeta\cdot\nabla u+\frac{1}{2}(\nabla\cdot\bbeta)u)v\dif\bx,
\end{aligned}
\end{equation}
for $u,v\in  H^1(\mathcal{M}_k)$. The energy norm is defined by the symmetric operator as
\begin{equation}
\nB{v}^2 = \OpB_S(v,v) = \nLdd{A^{1/2}\nabla v}^2+\nLd{(\mu-\frac{1}{2}\nabla\cdot\bbeta)^{1/2}v}^2,
\end{equation}
observe that $\nB{v}^2=\sum_{K\in\Mk}\nB{v}_K^2$, with $\nB{{\cdot}}_K$ as in \eqref{eq:defnBK}. Since the norm $\nB{{\cdot}}$ is defined by the symmetric operator, it is well suited to study problems with dominant diffusion or reaction. On the other hand, it is inappropriate for convection dominated problems since it lacks a term measuring the error along the velocity direction. For this kind of problems we use the augmented norm
\begin{equation}\label{eq:augnorm}
\nBp{v}=\nB{v}+\sup_{\substack{w\in H^1_0(\Omega)\\ \nB{w}=1}}(\OpB_A(v,w)+\OpB_J(v,w)),
\end{equation}
where
\begin{equation}
\OpB_J(v,w)=-\sum_{\sigma\in\Fki}\int_\sigma \jump{\bbeta v}\cdot\bm{n}_\sigma \mean{\pi_0 w}\dif\by
\end{equation}
is a term needed to sharpen the error bounds. The next two theorems give a bound on the error of the local scheme, measured in the energy or the augmented norm.
\begin{theorem}\label{thm:energynormbound}
	Let $u\in H^1_0(\Omega)$ be the solution to \eqref{eq:weak}, $u_k\in V(\mathfrak{T}_k)$ given by \cref{alg:local}, $s_k\in V(\mathfrak{T}_k)\cap H^1_0(\Omega)$ from \cref{eq:defpot,eq:defhsk} and $\btk,\bqk\in \mathbf{RTN}_\ar(\Mk)$ be defined by \cref{eq:defflux,eq:deflocflux}. Then, the error measured in the energy norm is bounded as
	\begin{equation}
	\nB{u-u_k}\leq \eta =  \left(\sum_{K\in\Mk}\eta_{NC,K}^2\right)^{1/2}+\left(\sum_{K\in\Mk}\eta_{1,K}^2\right)^{1/2},
	\end{equation}
	where $\eta_{1,K}=\eta_{R,K}+\eta_{DF,K}+\eta_{C,1,K}+\eta_{C,2,K}+\eta_{U,K}+\eta_{\Gamma,1,K}+\eta_{\Gamma,2,K}$.
\end{theorem}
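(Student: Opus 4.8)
The plan is to adapt the equilibrated-flux a posteriori framework of \cite{ESV10} to the present setting, the essential new difficulty being that the reconstructed fluxes $\btk,\bqk$ of \eqref{eq:defflux} are only piecewise $H_{\divop}$-conforming and jump across the internal faces lying on $\partial\Ok$. First I would split the error with the conforming potential reconstruction $s_k\in H^1_0(\Omega)$ by the triangle inequality, $\nB{u-u_k}\le\nB{u-s_k}+\nB{u_k-s_k}$. Since the energy norm is broken, $\nB{v}^2=\sum_{K\in\Mk}\nB{v}_K^2$, the second term equals $\big(\sum_{K\in\Mk}\eta_{NC,K}^2\big)^{1/2}$ directly from the definition \eqref{eq:etaNC} of $\eta_{NC,K}=\nB{u_k-s_k}_K$. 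This produces the first summand of $\eta$, so it remains to bound the conforming contribution $\nB{u-s_k}$ by $\big(\sum_K\eta_{1,K}^2\big)^{1/2}$.

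For the conforming term both arguments lie in $H^1_0(\Omega)$, so I would exploit that the energy norm is induced by the symmetric part $\OpB_S$ and that the skew-symmetric part vanishes on the diagonal, $\OpB_A(w,w)=0$ for $w\in H^1_0(\Omega)$. Writing $w=u-s_k$ and using the weak formulation \eqref{eq:weak} gives the identity
\[
\nB{u-s_k}^2=\OpB_S(w,w)=\OpB(w,w)=(f,w)-\OpB(s_k,w),
\]
so that, after dividing by $\nB{u-s_k}$, the task reduces to estimating the residual functional $\mathcal R(\varphi):=(f,\varphi)-\OpB(s_k,\varphi)$ uniformly over $\varphi\in H^1_0(\Omega)$ with $\nB{\varphi}=1$. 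This route, which discards $\OpB_A$ at no cost, is what keeps a single copy of the nonconformity estimator in the final bound.

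The core computation is then to rewrite $\mathcal R(\varphi)$ by inserting the fluxes. Expanding $\OpB(s_k,\varphi)$, rewriting the convective term through $\nabla\cdot(\bbeta s_k)$, adding and subtracting $\btk$ and $\bqk$, and applying Green's theorem \emph{element by element}, I would split $\mathcal R(\varphi)$ into: a volume residual $f-\nabla\cdot\btk-\nabla\cdot\bqk-(\mu-\nabla\cdot\bbeta)u_k$ tested against $\varphi$; the mismatch between $\btk$ and the discrete diffusive flux $-A\nabla u_k$; the convective mismatches $\nabla\cdot(\bqk-\bbeta s_k)$ and the face averages $\mean{\bqk-\bbeta s_k}\cdot\ns$; and the discrepancy from reconciling $u_k$ with $s_k$ in the zeroth-order terms. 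Because the local reconstructions $\hbtk,\hbqk$ (constructed in \cref{sec:potflux}) satisfy a local conservation property, the volume residual has vanishing mean on each element, so testing against $\varphi-\pi_0\varphi$ and invoking the weighted Poincaré–Friedrichs constant $m_K$ yields $\eta_{R,K}$; the same $(\OpI-\pi_0)$ and Poincaré argument yields $\eta_{C,1,K}$, a Cauchy–Schwarz at the flux level yields $\eta_{DF,K}$ (its second form following from one further integration by parts, producing the face traces weighted by $C_{t,K,\sigma}$), the face averages yield $\eta_{U,K}$, and the reaction discrepancy yields $\eta_{C,2,K}$. An element-wise Cauchy–Schwarz then assembles these into $\big(\sum_K\eta_{1,K}^2\big)^{1/2}$.

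I expect the genuinely new and most delicate step — and the one where this analysis departs from \cite{ESV10} — to be the treatment of the face terms that survive Green's theorem. Since $\btk=\bt_{k-1}\chi_{\Omega\setminus\Ok}+\hbtk$ and its convective analogue are \emph{not} in $H_{\divop}(\Omega)$, the element-wise integration by parts does not telescope to zero across interior faces: the contributions $\sum_\sigma\int_\sigma\jump{\btk}\cdot\ns\,\varphi$ and $\sum_\sigma\int_\sigma\jump{\bqk}\cdot\ns\,\varphi$ remain and must be controlled by $\nB{\varphi}=1$ alone, without any global $H_{\divop}(\Omega)$-conformity. I would isolate this in a dedicated lemma (cf.\ \cref{lemma:boundBBA}) and, after projecting the jumps onto $\Pb_0(\sigma)$, bound these terms by $\eta_{\Gamma,2,K}$ (through the constants $D_{t,K,\sigma}$) and $\eta_{\Gamma,1,K}$ (through $(|K|c_{\bbeta,\mu,K})^{-1/2}$) using trace and Poincaré-type inequalities on the faces. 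Collecting the two resulting sums and applying the triangle inequality gives the stated bound $\nB{u-u_k}\le\eta$.
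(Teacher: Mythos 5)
Your plan for the core estimates (using the local conservation property of \cref{lemma:cons} to test the volume residual against $\varphi-\pi_0\varphi$, the $(\OpI-\pi_0)$/Poincar\'e arguments, and the dedicated treatment of the flux jumps across subdomain boundaries via $D_{t,K,\sigma}$ and $(|K|c_{\bbeta,\mu,K})^{-1/2}$) matches what the paper proves in \cref{lemma:boundBBA}. The gap is in your error-splitting step, and it is genuine. After the plain triangle inequality $\nB{u-u_k}\le\nB{u_k-s_k}+\nB{u-s_k}$ and the identity $\nB{u-s_k}^2=\OpB(u-s_k,u-s_k)$, the functional you must estimate is $\mathcal{R}(\varphi)=\OpB(u-s_k,\varphi)$, whereas the estimators $\eta_{1,K}$ are tailored to the \emph{different} functional $\OpB(u-u_k,\varphi)+\OpB_A(u_k-s_k,\varphi)$, which is what \cref{lemma:boundBBA} bounds. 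The two differ by exactly the symmetric nonconformity term:
\begin{equation}
\OpB(u-s_k,\varphi)=\OpB(u-u_k,\varphi)+\OpB_A(u_k-s_k,\varphi)+\OpB_S(u_k-s_k,\varphi).
\end{equation}
Concretely, your claim that expanding $\mathcal{R}(\varphi)$ produces the volume residual $f-\nabla\cdot\btk-\nabla\cdot\bqk-(\mu-\nabla\cdot\bbeta)u_k$ and the diffusive mismatch $A\nabla u_k+\btk$ requires adding and subtracting $u_k$, and the leftover is not only a ``zeroth-order discrepancy'': it contains the first-order term $\int_K A\nabla(u_k-s_k)\cdot\nabla\varphi\dif\bx$ together with $\int_K(\mu-\tfrac12\nabla\cdot\bbeta)(u_k-s_k)\varphi\dif\bx$, i.e.\ all of $\OpB_S(u_k-s_k,\varphi)$. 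No constituent of $\eta_{1,K}$ controls this ($\eta_{C,2,K}$ only covers $\tfrac12(\nabla\cdot\bbeta)(u_k-s_k)$); the only available bound is Cauchy--Schwarz, $|\OpB_S(u_k-s_k,\varphi)|\le\bigl(\sum_{K\in\Mk}\eta_{NC,K}^2\bigr)^{1/2}\nB{\varphi}$. Your route therefore yields
\begin{equation}
\nB{u-u_k}\le 2\left(\sum_{K\in\Mk}\eta_{NC,K}^2\right)^{1/2}+\left(\sum_{K\in\Mk}\eta_{1,K}^2\right)^{1/2},
\end{equation}
which is weaker than the stated bound; your assertion that this route ``keeps a single copy of the nonconformity estimator'' is precisely what fails.

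To obtain the theorem as stated you need the sharper abstract estimate the paper invokes, \cite[Lemma 3.1]{Ern08}: for $u,s\in H^1_0(\Omega)$ and any $u_k\in V(\mathfrak{T}_k)$,
\begin{equation}
\nB{u-u_k}\le\nB{u_k-s}+|\OpB(u-u_k,v)+\OpB_A(u_k-s,v)|,\qquad v=(u-s)/\nB{u-s}.
\end{equation}
This bounds $\nB{u-u_k}$ directly, not through $\nB{u-s_k}$, and its dual-norm term is exactly the combined functional in which $\OpB_S(u_k-s_k,\cdot)$ has been cancelled, so that $\eta_{NC}$ enters exactly once; \cref{lemma:boundBBA} then concludes. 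The repair is not cosmetic: recovering the constant $1$ from your splitting amounts to reproving that lemma (it requires keeping $\OpB_S(u_k-s_k,u-s_k)$ intact and running a quadratic-inequality argument, rather than bounding the supremum of the residual of $s_k$ and adding the triangle inequality).
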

\begin{theorem}\label{thm:augmentednormbound}
	Under the same assumptions of \cref{thm:energynormbound}, the error measured in the augmented norm is bounded as
	\begin{equation}
	\nBp{u-u_k}\leq \tilde\eta =  2\eta +\left(\sum_{K\in\Mk}\eta_{2,K}^2\right)^{1/2},
	\end{equation}
	with $\eta$ from \cref{thm:energynormbound} and $\eta_{2,K}=\eta_{R,K}+\eta_{DF,K}+\tilde\eta_{C,1,K}+\tilde\eta_{U,K}+\eta_{\Gamma,1,K}+\eta_{\Gamma,2,K}$.
\end{theorem}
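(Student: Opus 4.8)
The plan is to exploit the additive structure of the augmented norm \eqref{eq:augnorm}, writing $\nBp{u-u_k}=\nB{u-u_k}+\sup_{w}\,(\OpB_A(u-u_k,w)+\OpB_J(u-u_k,w))$, where the supremum runs over $w\in H^1_0(\Omega)$ with $\nB{w}=1$. The first summand is already controlled by \cref{thm:energynormbound}, giving $\nB{u-u_k}\le\eta$ and accounting for one of the two copies of $\eta$ in $\tilde\eta$. Everything then reduces to bounding the convective supremum by $\eta+\bigl(\sum_{K\in\Mk}\eta_{2,K}^2\bigr)^{1/2}$ for an arbitrary but fixed admissible $w$.

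Fix such a $w$. Since $u\in H^1_0(\Omega)$, the jumps $\jump{\bbeta u}$ across interior faces vanish, hence $\OpB_J(u,w)=0$ and the supremand equals $\OpB_A(u-u_k,w)-\OpB_J(u_k,w)$. I would then invoke the weak formulation \eqref{eq:weak} together with $\OpB=\OpB_S+\OpB_A$ to replace $\OpB_A(u,w)=\int_\Omega fw\,\dif\bx-\OpB_S(u,w)$, and split $\OpB_S(u,w)=\OpB_S(u-u_k,w)+\OpB_S(u_k,w)$. Since $\OpB_S$ is the (semi-)inner product inducing $\nB{\cdot}$ under the assumption $\mu-\tfrac12\nabla\cdot\bbeta\ge0$, the Cauchy--Schwarz inequality and \cref{thm:energynormbound} give $|\OpB_S(u-u_k,w)|\le\nB{u-u_k}\nB{w}\le\eta$, which produces the second $\eta$. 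What remains is the residual functional
\[
R(w)=\int_\Omega fw\,\dif\bx-\OpB_S(u_k,w)-\OpB_A(u_k,w)-\OpB_J(u_k,w),
\]
evaluated with element-wise gradients, which must be bounded by $\bigl(\sum_{K\in\Mk}\eta_{2,K}^2\bigr)^{1/2}$.

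The core step is to insert the reconstructed fluxes $\btk\approx-A\nabla u_k$ and $\bqk\approx\bbeta u_k$ into $R(w)$ and integrate by parts element by element. Because $\btk,\bqk$ lie in $H_{\divop}(\Ok)$ but only jump across the subdomain boundaries, the element-wise Green theorem leaves interior-face contributions $\jump{\btk}\cdot\ns$ and $\jump{\bqk}\cdot\ns$, which I would control exactly by $\eta_{\Gamma,2,K}$ and $\eta_{\Gamma,1,K}$ via \cref{lemma:boundBBA}. The volume part then separates into the residual $f-\nabla\cdot\btk-\nabla\cdot\bqk-(\mu-\nabla\cdot\bbeta)u_k$, bounded against $w-\pi_0 w$ by a Poincaré argument (constant $m_K$) to give $\eta_{R,K}$; the diffusive mismatch $(A\nabla u_k+\btk)\cdot\nabla w$, bounded by $\eta_{DF,K}$; and the convective mismatch $\bqk-\bbeta u_k$ appearing both through its divergence and through face means, bounded by $\tilde\eta_{C,1,K}$ and $\tilde\eta_{U,K}$ after inserting the projectors $\pi_0,\pi_{0,\sigma}$ for free by orthogonality. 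Crucially, the term $-\OpB_J(u_k,w)$ is precisely what converts the interior-face convective jump $\jump{\bbeta u_k}$ into the $u_k$-based mean $\mean{\bqk-\bbeta u_k}\cdot\ns$; this explains why the augmented-norm estimators are built from $u_k$ rather than from $s_k$ and why the correction $\eta_{C,2,K}$ of the energy-norm bound does not reappear. A discrete Cauchy--Schwarz over $K\in\Mk$ then yields the factor $\bigl(\sum_K\eta_{2,K}^2\bigr)^{1/2}$, completing $\sup_w\,(\cdot)\le\eta+\bigl(\sum_K\eta_{2,K}^2\bigr)^{1/2}$.

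The main obstacle is the bookkeeping of the element-wise integration by parts with the non-$H_{\divop}(\Omega)$ fluxes: one must verify that every boundary contribution is absorbed by exactly one jump estimator, that each volume and face term can be tested against $w-\pi_0 w$ or $\mean{\pi_0 w}$ so that the projectors in $\eta_{R,K},\tilde\eta_{C,1,K},\tilde\eta_{U,K}$ may be inserted without loss, and that every multiplicative factor matches the prescribed constants $m_K,\tilde m_K,m_\sigma,D_{t,K,\sigma},c_{\bbeta,\mu,K}$ so that the final bound is genuinely free of unknown constants. Since this residual-functional machinery is shared with \cref{thm:energynormbound}, I expect most of it to be reusable, the only genuinely new ingredient being the identification of $\OpB_J(u_k,w)$ with the convective face terms.
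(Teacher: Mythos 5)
Your proposal is correct and takes essentially the same approach as the paper: your detour through the weak form, isolating $\OpB_S(u-u_k,w)$ (bounded by $\eta$ via Cauchy--Schwarz) and leaving the residual $\OpB(u-u_k,w)-\OpB_J(u_k,w)$, is algebraically identical to the paper's one-line inequality $\OpB_A\leq\OpB+|\OpB_S|$, which yields $\nBp{u-u_k}\leq 2\nB{u-u_k}+\sup_w(\OpB(u-u_k,w)-\OpB_J(u_k,w))$. Your treatment of that residual also matches the paper's, which simply reruns \cref{lemma:boundBBA} with $u_k$ in place of $s_k$ (hence $\tilde\eta_{C,1,K}$, $\tilde\eta_{U,K}$ and no $\eta_{C,2,K}$) and uses $-\OpB_J(u_k,w)$ to cancel the $\mean{\pi_0 w}\jump{\bbeta u_k}$ face contribution, leaving exactly $\tilde\eta_{U,K}$ and $\eta_{\Gamma,1,K}$.
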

The error estimators of \cref{thm:energynormbound,thm:augmentednormbound} are free of undetermined constants, indeed they depend on the numerical solution, the smallest eigenvalues of the diffusion tensor, on the essential minimum of $\mu-\frac{1}{2}\nabla\cdot\bbeta$, the mesh size and known geometric constants. In contrast, the error estimators are not efficient. The reason is that, compared to the true errors $\nB{u-u_k}$ and $\nBp{u-u_k}$, the error estimators $\eta_{\Gamma,1,K},\eta_{\Gamma,2,K}$ have a lower order of convergence. We illustrate this numerically in \cref{exp:conv}.
However, $\eta_{\Gamma,1,K},\eta_{\Gamma,2,K}$ are useful in practice: whenever they are small, then the error estimators are efficient. When they become large then they indicate that the error is not localized and one should switch to a nonlocal method. This is also illustrated numerically in \cref{exp:conv}.

\section{Potential and fluxes reconstructions, proofs of the main results}\label{sec:errbound}
In this section, we will define the potential, diffusion and advection reconstructions, define the geometric constants appearing in the error estimators defined in \cref{eq:etaNC,eq:etaR,eq:etaDF,eq:etaC1,eq:etaC2,eq:etaU,eq:etaG1,eq:etaG2,eq:etatC1,eq:etatU} and finally prove \cref{thm:energynormbound,thm:augmentednormbound}.

\subsection{Potential and fluxes reconstruction via the equilibrated flux method}\label{sec:potflux}
We define here the fluxes reconstructions $\hbtk$, $\hbqk$ of \eqref{eq:defflux} and the potential reconstruction $\hat s_k$ of \eqref{eq:defpot}. In what follows we assume that $\Mk$ does not have hanging nodes, i.e. we consider matching meshes, since it simplifies the analysis; however, in practice nonmatching meshes possessing hanging nodes can be employed (as in \cref{sec:num}). Roughly speaking, the next results are extended to nonmatching meshes by building matching submeshes and computing the error estimators on those submeshes, we refer to \cite[Appendix]{ESV10} for the details.

We start defining some broken Sobolev spaces and then the potential and fluxes reconstructions. For $k=1,\ldots,M$ let $\mathcal{G}_k=\{G_j\,|\, j=1,\ldots,k\}$, where $G_k=\Omega_k$ and 
\begin{equation}
G_j =\Omega_j\setminus\cup_{i=j+1}^{k}\overline{\Omega}_{i} \qquad \text{for }j=1,\ldots,k-1.
\end{equation}
In \cref{fig:Omegak,fig:Dk} we give an example of a sequence of domains $\Omega_k$ and the corresponding set $\mathcal{G}_k$.
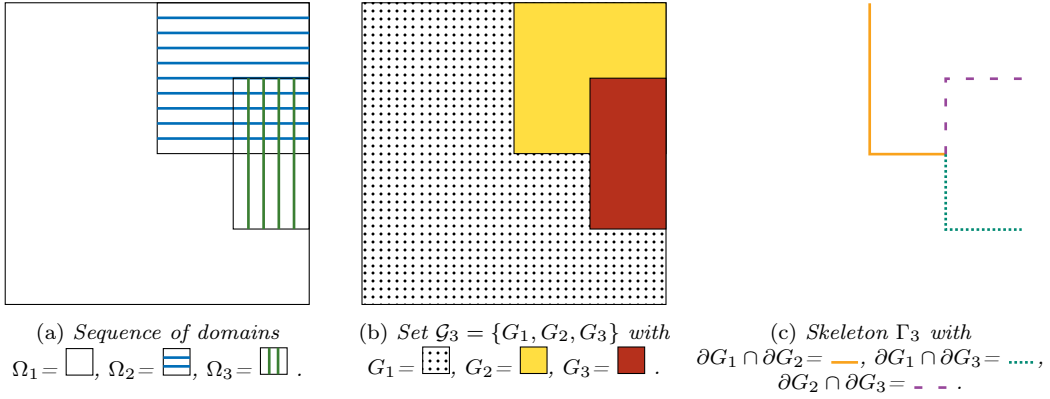
\begin{figure}
	\begin{center}
		\begin{subfigure}[t]{0.3\textwidth}
			\centering
			\captionsetup{justification=centering}
			\begin{tikzpicture}[scale=1]
			\draw (0,0) rectangle (4,4);
			\draw (2,2) rectangle (4,4);
			\draw[color=NavyBlue,line width=1pt] (2,2.2)--(4,2.2);
			\draw[color=NavyBlue,line width=1pt] (2,2.4)--(4,2.4);
			\draw[color=NavyBlue,line width=1pt] (2,2.6)--(4,2.6);
			\draw[color=NavyBlue,line width=1pt] (2,2.8)--(4,2.8);
			\draw[color=NavyBlue,line width=1pt] (2,3)--(4,3);
			\draw[color=NavyBlue,line width=1pt] (2,3.2)--(4,3.2);
			\draw[color=NavyBlue,line width=1pt] (2,3.4)--(4,3.4);
			\draw[color=NavyBlue,line width=1pt] (2,3.6)--(4,3.6);
			\draw[color=NavyBlue,line width=1pt] (2,3.8)--(4,3.8);
			\draw (3,1) rectangle (4,3);
			\draw[color=OliveGreen,line width=1pt] (3.2,1)--(3.2,3);
			\draw[color=OliveGreen,line width=1pt] (3.4,1)--(3.4,3);
			\draw[color=OliveGreen,line width=1pt] (3.6,1)--(3.6,3);
			\draw[color=OliveGreen,line width=1pt] (3.8,1)--(3.8,3);
			\end{tikzpicture}
			\caption{Sequence of domains\\$\Omega_1$= \tikz \draw (0,0) rectangle (10pt,10pt);, $\Omega_2$= \tikz{\draw(0,0) rectangle (10pt,10pt);\draw[color=NavyBlue,line width=1pt] (0,6.6pt)--(10pt,6.6pt);\draw[color=NavyBlue,line width=1pt] (0,3.3pt)--(10pt,3.3pt);}, $\Omega_3$= \tikz{\draw(0,0) rectangle (10pt,10pt);\draw[color=OliveGreen,line width=1pt] (6.6pt,0pt)--(6.6pt,10pt);\draw[color=OliveGreen,line width=1pt] (3.3pt,0pt)--(3.3pt,10pt);} .}
			\label{fig:Omegak}
		\end{subfigure}
		\begin{subfigure}[t]{0.3\textwidth}
			\centering
			\captionsetup{justification=centering}
			\begin{tikzpicture}[scale=1]
			\draw[pattern=dots] (0,0) rectangle (4,4);
			\draw[fill=Goldenrod] (2,2) rectangle (4,4);
			\draw[fill=BrickRed] (3,1) rectangle (4,3);
			\end{tikzpicture}
			\caption{Set $\mathcal{G}_3=\{G_1,G_2,G_3\}$ with\\ $G_1$= \tikz \draw[pattern=dots] (0,0) rectangle (10pt,10pt);, $G_2$= \tikz \draw[fill=Goldenrod] (0,0) rectangle (10pt,10pt);, $G_3$= \tikz \draw[fill = BrickRed] (0,0) rectangle (10pt,10pt); .}
			\label{fig:Dk}
		\end{subfigure}
		\begin{subfigure}[t]{0.3\textwidth}
			\centering
			\captionsetup{justification=centering}
			\begin{tikzpicture}[scale=1]
			\draw[draw=none] (0,0) rectangle (4,4);
			\draw[color=YellowOrange, line width=1pt, solid] (2,4)--(2,2)--(3,2);
			\draw[color=PineGreen, line width=1pt, densely dotted] (3,2)--(3,1)--(4,1);
			\draw[color=Purple, line width=1pt, loosely dashed] (3,2)--(3,3)--(4,3);
			\end{tikzpicture}
			\caption{Skeleton $\Gamma_3$ with \\$\partial G_1\cap\partial G_2$= \tikz\draw[color=YellowOrange, line width=1pt, solid] (0,0)--(10pt,0pt);, $\partial G_1\cap\partial G_3$= \tikz \draw[color=PineGreen, line width=1pt, densely dotted] (0,0) -- (10pt,0pt);,\\$\partial G_2\cap\partial G_3$= \tikz \draw[color=Purple, line width=1pt, loosely dashed] (0,0) -- (15pt,0pt);.}
			\label{fig:Gk}
		\end{subfigure}
	\end{center}
	\caption{Example of sequence of domains $\Omega_1,\Omega_2,\Omega_3$, set $\mathcal{G}_3$ and skeleton $\Gamma_3$.}
	\label{fig:illustrationDk}
\end{figure}
We define the broken spaces
\begin{align}
H_{\divop}(\mathcal{G}_k) &= \{\bv\in L^2(\Omega)^d\,:\, \bv|_G\in H_{\divop}(G)\text{ for all }G\in \mathcal{G}_k\},\\
H^1({\mathcal{M}}_k)&=\{v\in L^2(\Omega)\,:\,v|_K\in H^1(K)\text{ for all }K\in\mathcal{M}_k\},
\end{align}
the divergence and gradient operators in $H_{\divop}(\mathcal{G}_k)$ and $H^1(\Mk)$ are taken element wise.
We extend the jump operator $\jump{\cdot}_\sigma$ to the broken space $H^1(\Mk)$. We call $\Gamma_k$ the internal skeleton of $\mathcal{G}_k$, that is
\begin{equation}
\Gamma_k=\{\partial G_i\cap\partial G_j\,|\, G_i,G_j\in\mathcal{G}_k,\, i\neq j\},
\end{equation}
an example of $\Gamma_k$ is given in \cref{fig:Gk}.
For each $\gamma\in\Gamma_k$ we define $\mathcal{F}_\gamma = \{\sigma\in\Fki\,|\,\sigma\subset \gamma\}$ and set $\bm{n}_\gamma$, the normal to $\gamma$, as $\bm{n}_\gamma|_\sigma=\bm{n}_\sigma$. The jump $\jump{\cdot}_\gamma$ on $\gamma$ is defined by $\jump{\cdot}_\gamma|_\sigma=\jump{\cdot}_\sigma$.

In \cite{ESV10} the reconstructed fluxes live in $H_{\divop}(\Omega)$. For the local algorithm we need to build such fluxes using the recursive relation \eqref{eq:defflux}. This leads to fluxes having jumps across the boundaries of the subdomains, i.e. $\gamma\in\Gamma_k$, hence they lie in the broken space $H_{\divop}(\mathcal{G}_k)$. In the rest of this section we explain how to build fluxes which are in an approximation space of $H_{\divop}(\mathcal{G}_k)$ and satisfy a local conservation property. 
We start by introducing a broken version of the usual Raviart-Thomas-Nédélec spaces \cite{Ned80,RaT77}, which we define as
\begin{equation}\label{eq:RTN}
\mathbf{RTN}_\ar(\Mk):=\{\bm{v}_k\in H_{\divop}(\mathcal{G}_k)\,:\, \bm{v}_k|_K\in\mathbf{RTN}_\ar(K)\text{ for all }K\in\Mk\},
\end{equation}
where $\ar\in\{\ell-1,\ell\}$ and $\mathbf{RTN}_\ar(K)=\mathbb{P}_\ar(K)^d+\bx \mathbb{P}_{\ar}(K)$. In order to build functions in $\mathbf{RTN}_\ar(\Mk)$ we need a characterization of this space. 
Let $\bm{v}_k\in L^2(\Omega)^d$ such that $\bm{v}_k|_K\in\mathbf{RTN}_\ar(K)$ for each $K\in\Mk$, it is known that $\bm{v}_k\in H_{\divop}(\Omega)$ if and only if $\jump{\bm{v}_k}_\sigma\cdot\bm{n}_\sigma=0$ for all $\sigma\in\Fki$ (see \cite[Lemma 1.24]{PiE12}). Since we search for fluxes $\bm{v}_k$ in $H_{\divop}(\mathcal{G}_k)$, we relax this condition and allow $\jump{\bm{v}_k}_\gamma\cdot\bm{n}_\gamma\neq 0$ for $\gamma\in\Gamma_k$.

\begin{lemma}
	Let $\bm{v}_k\in L^2(\Omega)^d$ be such that $\bm{v}_k|_K\in\mathbf{RTN}_\ar(K)$ for each $K\in\Mk$, then $\bm{v}_k\in \mathbf{RTN}_\ar(\Mk)$ if and only if $\jump{\bm{v}_k}_\sigma\cdot\bm{n}_\sigma=0$ for all $\sigma\notin \cup_{\gamma\in\Gamma_k}\mathcal{F}_\gamma$.
\end{lemma}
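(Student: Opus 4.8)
The plan is to reduce the claim to the classical $H_{\divop}$ characterization recalled just above, namely \cite[Lemma 1.24]{PiE12}, applied separately on each piece $G\in\mathcal{G}_k$. By definition, $\bm{v}_k\in\mathbf{RTN}_\ar(\Mk)$ is equivalent to the two requirements $\bm{v}_k|_K\in\mathbf{RTN}_\ar(K)$ for every $K\in\Mk$ — which is already part of the hypothesis — together with $\bm{v}_k\in H_{\divop}(\mathcal{G}_k)$, i.e. $\bm{v}_k|_G\in H_{\divop}(G)$ for every $G\in\mathcal{G}_k$. Hence it suffices to characterize this last membership through normal jumps.

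First I would verify that, up to a set of measure zero, $\mathcal{G}_k$ is a partition of $\Omega$ into pairwise disjoint open subdomains, each a union of elements of $\Mk$. Disjointness is immediate from $G_j=\Omega_j\setminus\cup_{i>j}\overline{\Omega}_i$: for $j<j'$ one has $G_{j'}\subset\overline{\Omega}_{j'}$, which is removed from $G_j$. Coverage follows by assigning a point $x$ to the largest index $j$ with $x\in\Omega_j$. That each $G$ is a union of mesh elements is a consequence of condition 1 of \cref{ass:mesh}, which forces every $K\in\Mk$ to be either contained in $\Omega_i$ or disjoint from it up to measure zero; thus no element straddles an interface $\gamma\in\Gamma_k$, and $\{K\in\Mk\,:\,K\subset G\}$ is a conforming simplicial mesh of $G$.

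Next I would identify the internal faces of this restricted mesh. An internal face $\sigma\in\Fki$ shared by $K,T\in\Mk$ is interior to some $G$ precisely when $K$ and $T$ belong to the same piece, which by the definition of the skeleton occurs exactly when $\sigma\notin\cup_{\gamma\in\Gamma_k}\mathcal{F}_\gamma$; the remaining internal faces sit on some $\gamma\in\Gamma_k$ and become boundary faces of the two adjacent pieces, whereas faces on $\partial\Omega$ are never internal. Applying \cite[Lemma 1.24]{PiE12} on each $G$ gives that $\bm{v}_k|_G\in H_{\divop}(G)$ if and only if $\jump{\bm{v}_k}_\sigma\cdot\bm{n}_\sigma=0$ for all internal faces $\sigma$ of the mesh on $G$. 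Taking the conjunction over all $G\in\mathcal{G}_k$, and using that every $\sigma\in\Fki$ off the skeleton is interior to exactly one piece, yields the stated equivalence.

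The main difficulty is bookkeeping rather than analysis: one must check that each interface $\gamma\in\Gamma_k$ is resolved by $\Mk$ — so that $\mathcal{F}_\gamma$ is well defined and the restricted meshes are conforming — and that the orientation $\bm{n}_\gamma$ induced from $\bm{n}_\sigma$ is consistent on the two sides of $\gamma$. Both facts follow from \cref{ass:mesh}; granting them, the argument is simply the localized version of the standard characterization.
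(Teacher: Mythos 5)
Your proof is correct and takes essentially the same route as the paper, whose entire proof of this lemma reads ``Following the lines of \cite[Lemma 1.24]{PiE12}'' --- i.e.\ precisely the reduction to the classical normal-jump characterization of $H_{\divop}$ that you carry out by applying that lemma piecewise on each $G\in\mathcal{G}_k$. Your write-up additionally supplies the bookkeeping the paper leaves implicit (disjointness of the pieces, identification of the skeleton faces); the only minor imprecision is that the fact that each $G$ is resolved by $\Mk$ does not follow from condition 1 of \cref{ass:mesh} alone but needs condition 2 as well, propagated inductively through the successive refinements, which is harmless for the argument.
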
 
\begin{proof}
	Following the lines of \cite[Lemma 1.24]{PiE12}.
\end{proof}
The diffusive and convective fluxes $\btk,\bqk\in \mathbf{RTN}_\ar(\Mk)$ are defined recursively as in \eqref{eq:defflux}, where $\hbtk,\hbqk\in \mathbf{RTN}_\ar(\hMk)$, with
\begin{equation}
\mathbf{RTN}_\ar(\hMk):=\{\bm{v}_k\in H_{\divop}(\Omega_k)\,:\, \bm{v}_k\in\mathbf{RTN}_\ar(K)\text{ for all }K\in\hMk\},
\end{equation}
are given by the relations
\begin{subequations}\label{eq:deflocflux}
	\begin{equation}\label{eq:deflocflux1}
	\begin{aligned}
	\int_\sigma \hbtk\cdot\bm{n}_\sigma p_k\dif\by&= \int_\sigma (-\mean{A\nabla \huk}_\omega\cdot\bm{n}_\sigma+\eta_\sigma\frac{\gamma_\sigma}{h_\sigma}\jump{\huk}_{g_k})p_k\dif\by,\\
	\int_\sigma \hbqk\cdot\bm{n}_\sigma p_k\dif\by &= \int_\sigma (\bbeta\cdot\bm{n}_\sigma\mean{\huk}_{g_k}+\nu_\sigma\jump{\huk}_{g_k})p_k\dif\by
	\end{aligned}
	\end{equation}
	for all $\sigma\in\hFk$ and $p_k\in \mathbb{P}_\ar(\sigma)$ and
	\begin{equation}\label{eq:deflocflux2}
	\begin{aligned}
	\int_K \hbtk \cdot\hbrk\dif\bx &= -\int_K A\nabla\huk\cdot\hbrk\dif\bx+\sum_{\sigma\in\mathcal{F}_K}\int_\sigma\omega_{K,\sigma}\jump{\huk}_{g_k} A|_K\hbrk\cdot\bm{n}_\sigma\dif\by,\\
	\int_K\hbqk\cdot\hbrk\dif\bx &= \int_K \huk\bbeta\cdot\hbrk\dif\bx
	\end{aligned}
	\end{equation}
\end{subequations}
for all $K\in\hMk$ and $\hbrk\in\mathbb{P}_{\ar-1}(K)^d$. Since $\hbtk|_K\cdot\bm{n}_\sigma$, $\hbqk|_K\cdot\bm{n}_\sigma\in\mathbb{P}_\ar(\sigma)$ (see \cite[Proposition 3.2]{BrF91}) then \eqref{eq:deflocflux1} defines $\hbtk|_K\cdot\bm{n}_\sigma$, $\hbqk|_K\cdot\bm{n}_\sigma$ on $\sigma$. The remaining degrees of freedom are fixed by \eqref{eq:deflocflux2} \cite[Proposition 3.3]{BrF91}.
Thanks to \eqref{eq:deflocflux1} we have $\jump{\hbtk}\cdot\bm{n}_\sigma=0$ and $\jump{\hbqk}\cdot\bm{n}_\sigma=0$ for $\sigma\in\hFki$ and hence $\hbtk,\hbqk\in \mathbf{RTN}_\ar(\hMk)$. By construction it follows $\btk,\bqk\in \mathbf{RTN}_\ar(\Mk)$.

Let $K\in\Mk$ and $\pi_\ar$ be the $L^2$-orthogonal projector onto $\mathbb{P}_\ar(K)$, the following lemma states a local conservation property of the reconstructed fluxes. The proof follows the lines of \cite[Lemma 2.1]{ESV10}
\begin{lemma}\label{lemma:cons}
	Let $u_k\in V(\mathfrak{T}_k)$ be given by \cref{alg:local} and $\btk,\bqk\in H_{\divop}(\mathcal{G}_k)$ defined by \cref{eq:defflux,eq:deflocflux}. For all $K\in\Mk$ it holds
	\begin{equation}\label{eq:cons}
	(\nabla\cdot \btk+\nabla\cdot\bqk+\pi_\ar((\mu-\nabla\cdot\bbeta)u_k))|_K = \pi_\ar f|_K.
	\end{equation}
\end{lemma}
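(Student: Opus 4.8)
The plan is to prove the identity element by element, by induction on $k$, exploiting the discontinuous nature of the finite element space to localize the discrete equation to a single element. Since $\btk,\bqk\in\mathbf{RTN}_\ar(\Mk)$ their element-wise divergences lie in $\mathbb{P}_\ar(K)$, and both $\pi_\ar((\mu-\nabla\cdot\bbeta)u_k)$ and $\pi_\ar f$ live in $\mathbb{P}_\ar(K)$, so \cref{eq:cons} is an identity between polynomials of degree $\ar$ on $K$. Using that $\pi_\ar$ is self-adjoint and that $p_k\in\mathbb{P}_\ar(K)$, it therefore suffices to verify that $\int_K(\nabla\cdot\btk+\nabla\cdot\bqk+(\mu-\nabla\cdot\bbeta)u_k)p_k\dif\bx=\int_K fp_k\dif\bx$ for every $p_k\in\mathbb{P}_\ar(K)$.

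First I would split the elements according to whether $K\subset\Omega\setminus\Ok$ or $K\subset\Ok$. For $K\subset\Omega\setminus\Ok$, \cref{ass:mesh} gives $K\in\mathcal{M}_{k-1}$, and the recursive definitions \cref{eq:defflux} together with $u_k=g_k+\huk$ and the zero extension of $\hbtk,\hbqk,\huk$ outside $\Ok$ yield $\btk|_K=\bt_{k-1}|_K$, $\bqk|_K=\bq_{k-1}|_K$ and $u_k|_K=u_{k-1}|_K$; the claim on such $K$ then follows directly from the induction hypothesis at level $k-1$. The base case $k=1$ is covered by the second case below, since $\Omega_1=\Omega$, $g_1=0$ and $\bt_0=\bq_0=0$.

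The core of the argument is the case $K\subset\Ok$, where $\btk|_K=\hbtk|_K$, $\bqk|_K=\hbqk|_K$ and $u_k|_K=\huk|_K$. Applying Green's theorem to the two divergence terms gives $\int_K\nabla\cdot\hbtk\,p_k\dif\bx=-\int_K\hbtk\cdot\nabla p_k\dif\bx+\sum_{\sigma\in\FK}\int_\sigma(\hbtk\cdot\bm{n}_K)p_k\dif\by$, and similarly for $\hbqk$. Into the volume integrals I substitute the interior flux relations \cref{eq:deflocflux2} with $\hbrk=\nabla p_k\in\mathbb{P}_{\ar-1}(K)^d$, and into the face integrals the normal-trace relations \cref{eq:deflocflux1} with the test polynomial $p_k|_\sigma\in\mathbb{P}_\ar(\sigma)$. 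After these substitutions the volume part reproduces exactly $\int_K(A\nabla\huk\cdot\nabla p_k-\huk\bbeta\cdot\nabla p_k)\dif\bx$, while the face part reproduces the consistency, symmetry, penalty and upwind face terms of the bilinear form.

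The final step is to recognize the assembled expression as $\OpB(\huk,v_k,\widehat{\mathfrak{T}}_k,g_k)-\int_K(\mu-\nabla\cdot\bbeta)\huk\,p_k\dif\bx$, where $v_k\in V(\widehat{\mathfrak{T}}_k)$ is the discontinuous test function equal to $p_k$ on $K$ and $0$ on all other elements; invoking the discrete equation $\OpB(\huk,v_k,\widehat{\mathfrak{T}}_k,g_k)=(f,v_k)_k=\int_K fp_k\dif\bx$ of \cref{alg:local} then yields the claimed identity. I expect the main obstacle to be the careful bookkeeping in this matching: one must track the orientation $\bm{n}_K\cdot\bm{n}_\sigma$ relating $\bm{n}_K$ to the fixed face normal $\bm{n}_\sigma$, distinguish interior faces in $\hFki$ from boundary faces in $\hFkb$ (where the weights, the averages $\mean{\cdot}_\omega$, $\mean{\cdot}_{g_k}$ and the jumps $\jump{\cdot}_{g_k}$ differ and encode the boundary data $g_k$), and verify that for $v_k$ as above the one-sided traces $\mean{A\nabla v_k}_\omega=\omega_{K,\sigma}A\nabla p_k|_K$ and $\jump{v_k}=\pm p_k|_K$ match precisely the terms in \cref{eq:deflocflux1,eq:deflocflux2}. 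These are exactly the choices for which the flux reconstructions were designed, so the corresponding contributions cancel, but the sign and face-type accounting is where all the care is required.
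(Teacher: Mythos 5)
Your proposal is correct and is in substance the paper's own proof: the core step---testing the local discrete equation against the extension by zero of $p_k\in\mathbb{P}_\ar(K)$, applying Green's theorem, and cancelling the volume and face terms via the flux definitions \cref{eq:deflocflux1,eq:deflocflux2}---is exactly the computation in the paper, as is the final reduction using $\nabla\cdot\btk|_K,\nabla\cdot\bqk|_K\in\mathbb{P}_\ar(K)$. The only difference is how the recursion in \cref{eq:defflux} is unwound: you use induction on $k$ with the case split $K\subset\Ok$ versus $K\subset\Omega\setminus\Ok$, while the paper jumps directly to the last level $j=\max\{i\,:\,K\subset\Omega_i\}$ at which $K$ lies in a refined subdomain and runs the same computation there; the two organizations are equivalent.
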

\begin{proof}
	Let $K\in\Mk$ and $j=\max\{i=1,\ldots,k\,:\, K\subset\Omega_j\}$, then $K\in\widehat{\mathcal{M}}_j$, $\btk|_K=\hbtj|_K$, $\bqk|_K=\hbqj|_K$ and $u_k|_K=\hat u_j|_K$. Let $v_j\in \mathbb{P}_{\ar}(K)$, with $v_j=0$ outside of $K$, by the Green theorem we have
	\begin{equation}\label{eq:greentq}
	\int_K (\nabla\cdot \hbtj+\nabla\cdot\hbqj)v_j\dif \bx = -\int_K (\hbtj+\hbqj)\cdot\nabla v_j\dif \bx +\sum_{\sigma\in\mathcal{F}_K}\int_\sigma v_j(\hbtj+\hbqj)\cdot\bm{n}_K\dif \by
	\end{equation}
	and using $\OpB(\hat u_j,v_j,\widehat{\mathfrak{T}}_j,g_j)=(f,v_j)_j$ it follows
	\begin{equation}
	\begin{aligned}
	\int_K f v_j \dif \bx &= \int_K (A\nabla \hat u_j\cdot \nabla v_j+(\mu-\nabla\cdot \bbeta)\hat u_j v_j-\hat{u}_j\bbeta\cdot \nabla v_j)\dif \bx\\
	&\quad -\sum_{\sigma\in\mathcal{F}_K}\int_\sigma(\jump{v_j}\mean{A\nabla \hat{u}_j}_{\omega}\cdot \bm{n}_\sigma+\jump{\hat{u}_j}_{g_j}\mean{A\nabla v_j}_{\omega}\cdot \bm{n}_\sigma)\dif \by\\
	&\quad +\sum_{\sigma\in\mathcal{F}_K}\int_\sigma ((\eta_\sigma\frac{\gamma_\sigma}{h_\sigma}+\nu_\sigma)\jump{\hat{u}_j}_{g_j}\jump{v_j}+\bbeta\cdot\bm{n}_\sigma\mean{\hat{u}_j}_{g_j}\jump{v_j})\dif \by.
	\end{aligned}
	\end{equation}
	Since $\mean{A\nabla v_j}_\omega =\omega_{K,\sigma}A|_K\nabla v_j$ and $\jump{v_j}\bm{n}_\sigma=v_j|_K\bm{n}_K$, using \cref{eq:deflocflux,eq:greentq}, we obtain
	\begin{equation}\label{eq:precons}
	\int_K f v_j \dif \bx = \int_K (\nabla\cdot \hbtj+\nabla\cdot\hbqj+(\mu-\nabla\cdot\bbeta)\hat u_j)v_j\dif \bx
	\end{equation}
	and the result follows from $\nabla\cdot\hbtj,\nabla\cdot\hbqj\in\mathbb{P}_\ar(K)$, $\btk|_K=\hat{\bt}_j|_K$, $\bqk|_K=\hat{\bq}_j|_K$ and $u_k|_K=\hat{u}_j|_K$. 
\end{proof}

In order to define the $H^1_0(\Omega)$ conforming approximation $s_k$ of $u_k$ we will need the so-called Oswald operator already considered in \cite{KaP03} for a posteriori estimates. Let $\mathfrak{T}=(D,\M,\F)$, $g\in C^0(\partial D)$ and consider $\mathcal{O}_{\mathfrak{T},g}:V(\mathfrak{T})\rightarrow V(\mathfrak{T})\cap H^1(D)$, for a function $v\in V(\mathfrak{T})$ the value of $\mathcal{O}_{\mathfrak{T},g} v$ is prescribed at the Lagrange interpolation nodes $p$ of the conforming finite element space $V(\mathfrak{T})\cap H^1(D)$. Let $p\in \overline{D}$ be a Lagrange node, if $p\notin \partial D$ we set
\begin{equation}
\mathcal{O}_{\mathfrak{T},g}v(p)=\frac{1}{\# \mathcal M_p}\sum_{K\in\mathcal{M}p}v|_K(p),
\end{equation}
where $\mathcal M_{p}=\{K\in\M\,:\,p\in\overline{K}\}$. If instead $p\in\partial D$ then $\mathcal{O}_{\mathfrak{T},g}v(p)=g(p)$, where $g$ is the Dirichlet condition on $\partial D$. The reconstructed potential $s_k\in V(\mathfrak{T}_k)\cap H^1_0(\Omega)$ is built as in \eqref{eq:defpot}, where
\begin{equation}\label{eq:defhsk}
\hat s_k = \mathcal{O}_{\widehat{\mathfrak{T}}_k,s_{k-1}} \huk.
\end{equation}

\subsection{Constants definition and preliminary results}\label{sec:ctedef}
Here we define the constants appearing in \cref{eq:etaNC,eq:etaR,eq:etaDF,eq:etaC1,eq:etaC2,eq:etaU,eq:etaG1,eq:etaG2,eq:etatC1,eq:etatU} and derive preliminary results needed to prove \cref{thm:energynormbound,thm:augmentednormbound}.

Let $K\in\Mk$ and $\sigma\in\mathcal{F}_K$, we recall that $|K|$ is the measure of $K$ and $|\sigma|$ the $d-1$ dimensional measure of $\sigma$. We denote by $c_{A,K}$ the minimal eigenvalue of $A|_K$. Next, we denote by $c_{\bbeta,\mu,K}$ the essential minimum of $\mu-\frac{1}{2}\nabla\cdot\bbeta\geq 0$ on $K$. 
In what follows we will assume that $\mu-\frac{1}{2}\nabla\cdot\bbeta>0$ a.e. in $\Omega$, hence $c_{\bbeta,\mu,K}>0$ for all $K\in\Mk$, and provide error estimators under this assumption. We explain in \cref{sec:altbounds} how to overcome this limitation slightly modifying the proofs and error estimators.

The cutoff functions $m_K,\tilde m_K$ and $m_\sigma$ are defined by 
\begin{subequations} \label{eq:cutoff}
	\begin{align} \label{eq:mK}
	m_K =& \min\{ C_p^{1/2}h_K c_{A,K}^{-1/2},c_{\bbeta,\mu,K}^{-1/2}\},\\ \label{eq:tmK}
	\tilde m_K=& \min\{ (C_p+C_p^{1/2})h_Kc_{A,K}^{-1}, h_K^{-1}c_{\bbeta,\mu,K}^{-1}+c_{\bbeta,\mu,K}^{-1/2}c_{A,K}^{-1/2}/2\},\\ \label{eq:ms}
	m_\sigma^2=& \min\lbrace \max_{K\in\mathcal{M}_\sigma}\{3d|\sigma|h_K^2|K|^{-1}c_{A,K}^{-1}\},\max_{K\in\mathcal{M}_\sigma}\{|\sigma||K|^{-1}c_{\bbeta,\mu,K}^{-1}\}\rbrace,
	\end{align}
\end{subequations}
where $C_p=1/\pi^2$ is an optimal Poincaré constant for convex domains \cite{PaW60}. Let $v\in H^1(\mathcal{M}_k)$, it holds
\begin{subequations}\label{eq:bounds}
	\begin{align} \label{eq:bounds1}
	\nLdK{v-\pi_0 v}&\leq m_K \nB{v}_K & \text{for all }& K\in\Mk,\\ \label{eq:bounds2}
	\nLds{v-\pi_0 v|_K}&\leq C_{t,K,\sigma}^{1/2}\tilde{m}_K^{1/2}\nB{v}_K & \text{for all }& \sigma\in \Fk \text{ and } K\in\mathcal{M}_\sigma,\\ \label{eq:bounds3}
	\nLds{\jump{\pi_0 v}}&\leq m_\sigma\sum_{K\in\mathcal{M}_\sigma}\nB{v}_K & \text{for all }& \sigma\in\Fk,
	\end{align}
\end{subequations}
where $\mathcal{M}_\sigma = \{K\in\Mk\,:\, \sigma\subset\partial K\}$ and $C_{t,K,\sigma}$ is the constant of the trace inequality
\begin{equation}\label{eq:trace}
\nLds{v|_K}^2\leq C_{t,K,\sigma}(h_K^{-1}\nLdK{v}^2+\nLdK{v}\nLddK{\nabla v}).
\end{equation}
It has been proved in \cite[Lemma 3.12]{Ste07} that for a simplex it holds $C_{t,K,\sigma}=|\sigma|h_K/|K|$. 

Let us briefly explain the role of constants \eqref{eq:cutoff} and how the bounds \eqref{eq:bounds} are obtained. We observe that for each bound in \eqref{eq:bounds} the cut off functions take the minimum between two possible values, allowing for robust error estimation in singularly perturbed regimes. For \eqref{eq:bounds1}, using the Poincaré inequality \cite[equation 3.2]{PaW60} we have
\begin{subequations}
	\begin{equation}\label{eq:bounds1a}
	\begin{aligned}
	\nLdK{v-\pi_0 v}&\leq C_p^{1/2} h_K \nLddK{\nabla v}\\
	& \leq C_p^{1/2}h_Kc_{A,K}^{-1/2}\nLddK{A^{1/2}\nabla v}\leq C_p^{1/2}h_Kc_{A,K}^{-1/2}\nB{v}_K.
	\end{aligned}
	\end{equation}
	Denoting $(\cdot,\cdot)_K$ the $L^2(K)$ inner product, it holds
	\begin{equation}
	\nLdK{v-\pi_0 v}^2=(v-\pi_0 v,v-\pi_0 v)_K=(v-\pi_0 v,v)_K\leq \nLdK{v-\pi_0 v}\nLdK{v},
	\end{equation}
	hence
	\begin{equation}\label{eq:bounds1b}
	\nLdK{v-\pi_0 v}\leq \nLdK{v} \leq c_{\bbeta,\mu,K}^{-1/2}\nLdK{(\mu-\frac{1}{2}\nabla\cdot\bbeta)^{1/2}v}\leq c_{\bbeta,\mu,K}^{-1/2}\nB{v}_K
	\end{equation}
\end{subequations}
and \eqref{eq:bounds1} follows. The choice between bounds \cref{eq:bounds1a,eq:bounds1b} depends on whether the problem is singularly perturbed or not. Bounds \eqref{eq:bounds2} and \eqref{eq:bounds3} are obtained similarly, see \cite[Lemma 4.2]{CFP09} and \cite[Lemma 4.5]{Voh08}. Finally, for $K\in\Mk$ and $\sigma\in \mathcal{F}_K$ we define
\begin{equation}\label{eq:Dk}
D_{t,K,\sigma}=\left(\frac{C_{t,K,\sigma}}{2 h_K c_{\bbeta,\mu,K}}\left(1+\sqrt{1+h_K^2\frac{c_{\bbeta,\mu,K}}{c_{A,K}}}\right)\right)^{1/2},
\end{equation}
which is used to bound $\nLds{v|_K}$ in terms of $\nB{v}_K$ in the next lemma.
\begin{lemma}\label{lemma:boundsigma}
	Let $v_k\in H^1(\Mk)$, for each $K\in\Mk$ and $\sigma\in \mathcal{F}_K$ it holds
	\begin{equation}
	\nLds{v_k|_K}\leq D_{t,K,\sigma} \nB{v_k}_K.
	\end{equation}
\end{lemma}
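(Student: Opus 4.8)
The plan is to start from the trace inequality \eqref{eq:trace} applied to $v=v_k|_K$, which already controls $\nLds{v_k|_K}^2$ by the two quantities $\nLdK{v_k}^2$ and $\nLdK{v_k}\nLddK{\nabla v_k}$, and then to convert these into the two ingredients of the energy seminorm $\nB{v_k}_K$ from \eqref{eq:defnBK}. Writing $P=\nLdK{(\mu-\frac{1}{2}\nabla\cdot\bbeta)^{1/2}v_k}$ and $G=\nLddK{A^{1/2}\nabla v_k}$, so that $\nB{v_k}_K^2=P^2+G^2$, the definitions of $c_{\bbeta,\mu,K}$ as the essential minimum of $\mu-\frac{1}{2}\nabla\cdot\bbeta$ on $K$ and of $c_{A,K}$ as the smallest eigenvalue of $A|_K$ yield the comparisons $\nLdK{v_k}\leq c_{\bbeta,\mu,K}^{-1/2}P$ and $\nLddK{\nabla v_k}\leq c_{A,K}^{-1/2}G$. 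Substituting both into \eqref{eq:trace} reduces the claim to the scalar inequality
\[
C_{t,K,\sigma}\bigl(h_K^{-1}c_{\bbeta,\mu,K}^{-1}P^2+c_{\bbeta,\mu,K}^{-1/2}c_{A,K}^{-1/2}PG\bigr)\leq D_{t,K,\sigma}^2\,(P^2+G^2),
\]
required to hold for all $P,G\geq 0$.

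The decisive step is then to check that $D_{t,K,\sigma}^2$ is exactly the sharp constant for this scalar inequality. By homogeneity this amounts to maximizing $\psi(s)=\frac{A_1 s^2+A_2 s}{1+s^2}$ over $s=P/G\geq 0$, where $A_1=C_{t,K,\sigma}h_K^{-1}c_{\bbeta,\mu,K}^{-1}$ and $A_2=C_{t,K,\sigma}c_{\bbeta,\mu,K}^{-1/2}c_{A,K}^{-1/2}$. Imposing that the equation $(A_1-\lambda)s^2+A_2 s-\lambda=0$ coming from $\psi(s)=\lambda$ admit a real (indeed nonnegative) root forces the discriminant to be nonnegative, whence the maximal value is $\lambda_{\max}=\frac{1}{2}\bigl(A_1+\sqrt{A_1^2+A_2^2}\bigr)$. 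A direct computation gives $A_2^2/A_1^2=h_K^2\,c_{\bbeta,\mu,K}/c_{A,K}$, so that $A_1^2+A_2^2=A_1^2\bigl(1+h_K^2 c_{\bbeta,\mu,K}/c_{A,K}\bigr)$ and therefore $\lambda_{\max}=\frac{C_{t,K,\sigma}}{2h_K c_{\bbeta,\mu,K}}\bigl(1+\sqrt{1+h_K^2 c_{\bbeta,\mu,K}/c_{A,K}}\bigr)=D_{t,K,\sigma}^2$, exactly as in \eqref{eq:Dk}. This closes the argument.

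Equivalently, and perhaps more transparently for a write-up, one applies Young's inequality $PG\leq\frac{1}{2}(\varepsilon P^2+\varepsilon^{-1}G^2)$ to the cross term and then selects $\varepsilon$ so as to equalize the resulting coefficients of $P^2$ and $G^2$; this balanced choice of $\varepsilon$ reproduces precisely the constant in \eqref{eq:Dk}. I expect the only genuine obstacle to be this sharp optimization: a careless use of Young's inequality gives a valid but strictly larger constant, and recovering the exact closed form with the square root in \eqref{eq:Dk} requires either the variational characterization of the quadratic-form maximum above or the optimally balanced Young parameter. Everything else, namely the trace inequality \eqref{eq:trace} and the two eigenvalue/coercivity comparisons, is routine.
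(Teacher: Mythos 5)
Your proof is correct, and it follows the same backbone as the paper's: trace inequality \eqref{eq:trace}, the two coercivity comparisons $\nLdK{v_k}\leq c_{\bbeta,\mu,K}^{-1/2}\nLdK{(\mu-\frac{1}{2}\nabla\cdot\bbeta)^{1/2}v_k}$ and $\nLddK{\nabla v_k}\leq c_{A,K}^{-1/2}\nLddK{A^{1/2}\nabla v_k}$, and then a sharp optimization producing exactly \cref{eq:Dk}. The only divergence is in how that optimization is executed. The paper first splits the cross term with a Young inequality carrying a free parameter $\epsilon$, compares coefficients to get $D_{t,K,\sigma}^2=\max\{C_{t,K,\sigma}(h_K^{-1}+\frac{1}{2\epsilon})c_{\bbeta,\mu,K}^{-1},\,C_{t,K,\sigma}\frac{\epsilon}{2}c_{A,K}^{-1}\}$, and then chooses $\epsilon$ to minimize this maximum; that balanced choice is precisely the ``equivalent alternative'' you sketch at the end, so your fallback route \emph{is} the paper's proof. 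Your primary route instead maximizes the Rayleigh-type quotient $\psi(s)=(A_1s^2+A_2s)/(1+s^2)$ directly through the discriminant condition, which yields the closed form $\frac{1}{2}\bigl(A_1+\sqrt{A_1^2+A_2^2}\bigr)$ in one stroke and makes it transparent that \cref{eq:Dk} is the \emph{sharp} constant for the reduced scalar inequality (a fact the paper's min--max formulation leaves slightly implicit). Both computations agree, as your identity $A_2^2/A_1^2=h_K^2c_{\bbeta,\mu,K}/c_{A,K}$ confirms; note only that both arguments use $c_{\bbeta,\mu,K}>0$, the standing assumption of \cref{sec:ctedef}.
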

\begin{proof}
	Let $v_k\in H^1(\Mk)$ and $\epsilon>0$. Applying Hölder inequality to the trace inequality \cref{eq:trace} we get
	\begin{equation}
	\nLds{v_k|_K}^2 \leq C_{t,K,\sigma}((h_K^{-1}+\frac{1}{2\epsilon})\nLdK{v_k}^2+\frac{\epsilon}{2}\nLddK{\nabla v_k}^2).
	\end{equation}
	Hence, if there exists $D_{t,K,\sigma}>0$ independent of $v_k$ such that
	\begin{equation}\label{eq:Dkeps}
	\begin{aligned}
	C_{t,K,\sigma}((h_K^{-1}+\frac{1}{2\epsilon})\nLdK{v_k}^2+&\frac{\epsilon}{2}\nLddK{\nabla v_k}^2)\\
	& \leq D_{t,K,\sigma}^2 (c_{A,K}\nLddK{\nabla v_k}^2+c_{\bbeta,\mu,K}\nLdK{v_k}^2) 
	\end{aligned}
	\end{equation}
	then $\nLds{v_k|_K}^2\leq D_{t,K,\sigma}^2 \nB{v_k}^2_K$ and the result holds. Relation \eqref{eq:Dkeps} holds if
	\begin{equation}
	C_{t,K,\sigma}(h_K^{-1}+\frac{1}{2\epsilon})\leq D_{t,K,\sigma}^2c_{\bbeta,\mu,K}, \qquad\qquad C_{t,K,\sigma}\frac{\epsilon}{2} \leq D_{t,K,\sigma}^2c_{A,K}
	\end{equation}
	and hence $D_{t,K,\sigma}^2=\max\{C_{t,K,\sigma}(h_K^{-1}+\frac{1}{2\epsilon})c_{\bbeta,\mu,K}^{-1},C_{t,K,\sigma}\frac{\epsilon}{2}c_{A,K}^{-1}\}$.
	Taking $\epsilon$ such that the maximum is minimized we get $D_{t,K,\sigma}$ as in \cref{eq:Dk}.
\end{proof}
The proof of the following Lemma is inspired from \cite[Theorem 3.1]{ESV10}, the main difference is that we take into account the weaker regularity of the reconstructed fluxes. 
\begin{lemma}\label{lemma:boundBBA}
	Let $u\in H^1_0(\Omega)$ be the solution to \eqref{eq:weak}, $u_k\in V(\mathfrak{T}_k)$ given by \cref{alg:local}, $s_k\in H^1_0(\Omega)$ from \cref{eq:defpot,eq:defhsk}, $\btk,\bqk\in H_{\divop}(\mathcal{G}_k)$ defined by \cref{eq:defflux,eq:deflocflux} and $v\in H^1_0(\Omega)$. Then
	\begin{equation}
	|\OpB(u -u_k ,v)+\OpB_A(u_k-s_k,v)| \leq  \left(\sum_{K\in\Mk}\eta_{1,K}^2\right)^{1/2}\nB{v},
	\end{equation}
	with $\eta_{1,K}=\eta_{R,K}+\eta_{DF,K}+\eta_{C,1,K}+\eta_{C,2,K}+\eta_{U,K}+\eta_{\Gamma,1,K}+\eta_{\Gamma,2,K}$.
\end{lemma}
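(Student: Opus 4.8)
The plan is to bound the linear functional $v\mapsto \OpB(u-u_k,v)+\OpB_A(u_k-s_k,v)$ for an arbitrary test function $v\in H^1_0(\Omega)$, extracting exactly one estimator of $\eta_{1,K}$ from each contribution. First I would use that $u$ solves \eqref{eq:weak}, i.e. $\OpB(u,v)=\int_\Omega fv\dif\bx$, together with the splitting $\OpB=\OpB_S+\OpB_A$ of \eqref{eq:bsba}, to rewrite the left-hand side as $\int_\Omega fv\dif\bx-\OpB_S(u_k,v)-\OpB_A(s_k,v)$; the skew-symmetric contributions of $u_k$ cancel, which is precisely why the correction $\OpB_A(u_k-s_k,v)$ is built into the statement. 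Since $v$ is continuous and vanishes on $\partial\Omega$, all genuine jumps $\jump{v}$ disappear, but the elementwise means $\pi_0 v$ still jump across faces, and this is what the face estimators will detect.

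Next I would introduce the reconstructed fluxes. For the diffusive part I would write $-A\nabla u_k\cdot\nabla v=\btk\cdot\nabla v-(A\nabla u_k+\btk)\cdot\nabla v$ and integrate $\int_\Omega\btk\cdot\nabla v\dif\bx$ by parts over the subdomains $\mathcal{G}_k$, on each of which $\btk\in H_{\divop}$; because $v$ is continuous and zero on $\partial\Omega$ this produces the volume term $-\int_\Omega(\nabla\cdot\btk)v\dif\bx$ together with a skeleton contribution $\sum_{\gamma\in\Gamma_k}\int_\gamma\jump{\btk}\cdot\bm{n}_\gamma\,v\dif\by$. For the convective part I would use $\bbeta\cdot\nabla s_k=\nabla\cdot(\bbeta s_k)-(\nabla\cdot\bbeta)s_k$ and replace $\nabla\cdot(\bbeta s_k)$ by $\nabla\cdot\bqk$ plus the correction $\nabla\cdot(\bqk-\bbeta s_k)$. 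Collecting the volume divergence terms yields the elementwise residual $f-\nabla\cdot\btk-\nabla\cdot\bqk-(\mu-\nabla\cdot\bbeta)u_k$, which by the conservation property of \cref{lemma:cons} has vanishing $\pi_0$-mean on each $K$; pairing it with $v-\pi_0 v$ and using \eqref{eq:bounds1} gives $\sum_K\eta_{R,K}\nB{v}_K$. The leftover reaction term $-\tfrac12\int_\Omega(\nabla\cdot\bbeta)(u_k-s_k)v\dif\bx$ is bounded by $\sum_K\eta_{C,2,K}\nB{v}_K$ via $\nLdK{v}\le c_{\bbeta,\mu,K}^{-1/2}\nB{v}_K$.

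Then I would dispatch the remaining three groups. The diffusive flux term $-\int_\Omega(A\nabla u_k+\btk)\cdot\nabla v\dif\bx$ is estimated in two ways: directly by the Cauchy–Schwarz inequality with $\nLddK{A^{1/2}\nabla v}\le\nB{v}_K$ (giving $\eta_{DF,K}^1$), and, after integrating by parts elementwise against $v-\pi_0 v$ and invoking \eqref{eq:bounds1} and \eqref{eq:bounds2} (giving $\eta_{DF,K}^2$); taking the minimum produces $\eta_{DF,K}$. For the convective correction $\int_\Omega\nabla\cdot(\bqk-\bbeta s_k)v\dif\bx$ I would split the broken divergence into its $(\OpI-\pi_0)$ part, which pairs with $v-\pi_0 v$ to give $\eta_{C,1,K}$, and its $\pi_0$ part, which I rewrite through the divergence theorem on each element into face integrals $(\pi_0 v)|_K\int_{\partial K}(\bqk-\bbeta s_k)\cdot\bm{n}_K\dif\by$. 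Reorganizing over interior faces with the algebraic identity $a_Kb_K-a_Tb_T=\mean{a}\jump{b}+\jump{a}\mean{b}$, and using that $\bbeta s_k$ is continuous so that $\jump{\bqk-\bbeta s_k}=\jump{\bqk}$, splits this into a mean-flux part controlled by $\eta_{U,K}$ through the jump bound \eqref{eq:bounds3} on $\jump{\pi_0 v}$, and a jump-flux part controlled by $\eta_{\Gamma,1,K}$ using the elementwise bound on $|\mean{\pi_0 v}|$ (which supplies the $(|K|c_{\bbeta,\mu,K})^{-1/2}$ factor). Finally the diffusive skeleton term $\sum_{\gamma\in\Gamma_k}\int_\gamma\jump{\btk}\cdot\bm{n}_\gamma\,v\dif\by$ is bounded by $\eta_{\Gamma,2,K}$ using $\nLds{v}\le D_{t,K,\sigma}\nB{v}_K$ from \cref{lemma:boundsigma}. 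Summing all contributions, each of the form (estimator)$\times\nB{v}_K$, and applying the discrete Cauchy–Schwarz inequality $\sum_K\eta_{1,K}\nB{v}_K\le(\sum_K\eta_{1,K}^2)^{1/2}\nB{v}$ together with $\nB{v}^2=\sum_K\nB{v}_K^2$ yields the claim.

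The hard part will be the face bookkeeping in the $\pi_0$-group, namely tracking how the weaker $H_{\divop}(\mathcal{G}_k)$ regularity manifests itself: by the characterization of $\mathbf{RTN}_\ar(\Mk)$ the normal jumps $\jump{\btk}\cdot\bm{n}_\sigma$ and $\jump{\bqk}\cdot\bm{n}_\sigma$ vanish off the skeleton $\Gamma_k$, so $\eta_{\Gamma,1,K}$ and $\eta_{\Gamma,2,K}$ only collect skeleton faces, whereas in \cite{ESV10} they are identically zero. Distributing each interior-face contribution to its two adjacent elements with the correct $\tfrac12$ and $\chi_\sigma$ weights, and treating the boundary faces through the convention $v|_{\partial\Omega}=0$, is where the proof departs from \cite{ESV10} and demands the most care.
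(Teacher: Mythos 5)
Your proposal is correct and takes essentially the same approach as the paper's own proof: you derive the same error representation (weak form plus the $\OpB_S/\OpB_A$ splitting, broken divergence theorem over $\mathcal{G}_k$ producing the skeleton jump terms), invoke \cref{lemma:cons} to pair the residual with $v-\pi_0 v$, and extract each estimator exactly as the paper does, including \cref{lemma:boundsigma} for $\eta_{\Gamma,2,K}$, the elementwise bound on $|\pi_0 v|$ for $\eta_{\Gamma,1,K}$, and the mean/jump face reorganization yielding $\eta_{U,K}$, before concluding with Cauchy--Schwarz. No gaps.
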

\begin{proof}
	Since $u$ satisfies \eqref{eq:weak}, using the definition of $\OpB$ and $\OpB_A$
	\begin{align}
	\OpB(u-u_k,v)+\OpB_A(u_k-s_k,v) 
	&= \int_\Omega (f-(\mu-\nabla\cdot\bbeta)u_k)v\dif \bx -\int_\Omega A\nabla u_k\cdot \nabla v\dif \bx\\
	&\quad -\int_\Omega \frac{1}{2}(\nabla\cdot\bbeta)(u_k-s_k)v\dif \bx -\int_\Omega \nabla\cdot(\bbeta s_k)v\dif \bx.
	\end{align}
	Using $v \btk\in H_{\divop}(\mathcal{G}_k)$, from the divergence theorem we have
	\begin{align}
	\int_\Omega (v\nabla\cdot \btk +\nabla v\cdot\btk)\dif \bx &= \sum_{G\in\mathcal{G}_k}\int_{G}\nabla\cdot(v\btk)\dif \bx =\sum_{G\in\mathcal{G}_k}\int_{\partial G} v\btk\cdot\bm{n}_{\partial G}\dif \by \\
	&=\sum_{\gamma\in\Gamma_k}\int_\gamma \jump{v \btk}\cdot \bm{n}_\gamma\dif \by = \sum_{\gamma\in\Gamma_k}\int_\gamma \jump{\btk}\cdot \bm{n}_\gamma v\dif \by
	\end{align}
	and hence
	\begin{equation}\label{eq:integrBBA}
	\begin{aligned}
	\OpB(u-u_k,v)+\OpB_A(u_k- s_k ,v)&=\int_\Omega (f-\nabla\cdot\btk-\nabla\cdot\bqk-(\mu-\nabla\cdot\bbeta)u_k)v\dif \bx \\
	&\quad -\int_\Omega \frac{1}{2}(\nabla\cdot\bbeta)(u_k-s_k)v\dif \bx +\int_\Omega \nabla\cdot(\bqk-\bbeta s_k)v\dif \bx\\
	&\quad -\int_\Omega (A\nabla u_k+\btk)\cdot \nabla v\dif \bx +\sum_{\gamma\in\Gamma_k}\int_\gamma \jump{\btk }\cdot\bm{n}_\gamma v\dif \by.
	\end{aligned}
	\end{equation}
	From \cref{lemma:cons} we deduce
	\begin{subequations}\label{eq:boundsBBAterms}
		\begin{equation}\label{eq:boundsBBAterm0}
		\begin{aligned}
		&\left|\int_\Omega (f-\nabla\cdot\btk-\nabla\cdot\bqk-(\mu-\nabla\cdot\bbeta)u_k)v\dif \bx\right| \\
		&\qquad\qquad\qquad\qquad = \left|\int_\Omega (f-\nabla\cdot\btk-\nabla\cdot\bqk-(\mu-\nabla\cdot\bbeta)u_k)(v-\pi_0 v)\dif \bx\right| \\
		&\qquad\qquad\qquad\qquad \leq \sum_{K\in\Mk} \eta_{R,K}\nB{v}_K.
		\end{aligned}
		\end{equation}
		Similarly, we get
		\begin{equation}\label{eq:boundsBBAterms1}
		\begin{aligned}
		\left| \int_\Omega (A\nabla u_k+\btk)\cdot \nabla v\dif \bx\right|&\leq  \sum_{K\in\Mk}\eta_{DF,K}\nB{v}_K,\\
		\left| \int_\Omega \frac{1}{2}(\nabla\cdot\bbeta)(u_k-s_k)v\dif \bx\right|&\leq  \sum_{K\in\Mk} \eta_{C,2,K}\nB{v}_K.
		\end{aligned}
		\end{equation}
		Since $\jump{\btk}_\sigma=0$ for $\sigma\in \Fki\setminus\cup_{\gamma\in\Gamma_k}\mathcal{F}_\gamma$, it holds
		\begin{equation}
		\sum_{\gamma\in\Gamma_k}\int_\gamma \jump{\btk}\cdot\bm{n}_\gamma v\dif \by = \sum_{\sigma\in\Fki}\int_\sigma\jump{\btk}\cdot\bm{n}_\sigma v\dif \by = \frac{1}{2}\sum_{K\in\Mk}\sum_{\sigma\in\mathcal{F}_K\cap\Fki}\int_\sigma \jump{\btk}\cdot\bm{n}_\sigma v\dif \by.
		\end{equation}
		Using \cref{lemma:boundsigma} we obtain
		\begin{equation}\label{eq:boundsBBAterms2}
		\begin{aligned}
		\left|\sum_{\gamma\in\Gamma_k}\int_\gamma \jump{\btk}\cdot\bm{n}_\gamma v\dif \by \right| &\leq  \frac{1}{2}\sum_{K\in\Mk}\sum_{\sigma\in\mathcal{F}_K\cap\Fki}\nLds{\jump{\btk}\cdot\bm{n}_\sigma}\nLds{v} \\
		&\leq   \sum_{K\in\Mk}\eta_{\Gamma,2,K}\nB{v}_K.
		\end{aligned}
		\end{equation}
		It remains to estimate $\int_\Omega \nabla\cdot(\bqk-\bbeta s_k)v\dif \bx$. For that, we use
		\begin{align}
		\int_\Omega \nabla\cdot(\bqk-\bbeta s_k)v\dif \bx 
		=& \sum_{K\in\Mk}\int_K (\OpI-\pi_0)\nabla\cdot(\bqk-\bbeta s_k)(v-\pi_0 v)\dif \bx \\
		&+\sum_{K\in\Mk} \sum_{\sigma\in\mathcal{F}_K}\int_\sigma (\bqk-\bbeta s_k)\cdot \bm{n}_K \pi_0 v\dif \by
		\end{align}
		and from \cref{eq:bounds1} we get
		\begin{align}\label{eq:boundsBBAterms3}
		\left|\sum_{K\in\Mk}\int_K (\OpI-\pi_0)\nabla\cdot(\bqk-\bbeta s_k)(v-\pi_0 v)\dif \bx \right|\leq \sum_{K\in\Mk}\eta_{C,1,K}\nB{v}_K.
		\end{align}
		For the second term we write
		\begin{align}
		&\sum_{K\in\Mk} \sum_{\sigma\in\mathcal{F}_K}\int_\sigma (\bqk-\bbeta s_k)\cdot \bm{n}_K \pi_0 v\dif \by= \sum_{\sigma\in\mathcal{F}_k}\int_\sigma \jump{\pi_{0,\sigma}(\bqk-\bbeta s_k)\pi_0 v}\cdot \bm{n}_\sigma\dif \by\\
		&=\sum_{\sigma\in\Fki}\int_\sigma \mean{\pi_0 v}\jump{\pi_{0,\sigma}(\bqk-\bbeta s_k)}\cdot \bm{n}_\sigma+\jump{\pi_0 v}\mean{\pi_{0,\sigma}(\bqk-\bbeta s_k)}\cdot \bm{n}_\sigma\dif \by\\
		&\quad +\sum_{\sigma\in \Fkb}\int_\sigma\pi_0 v\, \pi_{0,\sigma}(\bqk-\bbeta s_k)\cdot\bm{n}_\sigma \dif \by = \operatorname{I}+\operatorname{II}+\operatorname{III}
		\end{align}
		and we easily obtain, since $\jump{\bbeta s_k}=0$,
		\begin{equation}
		\operatorname{I} = \frac{1}{2}\sum_{K\in\Mk}\sum_{\sigma\in\mathcal{F}_K\cap\Fki}\int_\sigma \pi_0 v|_K \jump{\pi_{0,\sigma}\bqk}\cdot\bm{n}_\sigma\dif \by.
		\end{equation}
		Using $|\pi_0 v|_K| = |K|^{-1/2}\nLdK{\pi_0 v}\leq |K|^{-1/2}\nLdK{v}\leq (|K|c_{\bbeta,\mu,K})^{-1/2}\nB{v}_K$ we get
		\begin{equation}\label{eq:boundsBBAterms4}
		\operatorname{I} \leq \frac{1}{2}\sum_{K\in\Mk}\sum_{\sigma\in\mathcal{F}_K\cap\Fki}(|K|c_{\bbeta,\mu,K})^{-1/2}\nLus{\jump{\pi_{0,\sigma}\bqk}\cdot\bm{n}_\sigma}\nB{v}_K= \sum_{K\in\Mk}\eta_{\Gamma,1,K}\nB{v}_K.
		\end{equation}
		Let $\mathcal{M}_\sigma=\{K\in\Mk\,:\, \sigma\subset\partial K\}$, using \eqref{eq:bounds3} for the second term we have
		\begin{align}
		\operatorname{II} & \leq \sum_{\sigma\in\Fki} m_\sigma\nLds{\pi_{0,\sigma}\mean{\bqk-\bbeta s_k}\cdot\bm{n}_\sigma}\sum_{K\in \mathcal{M}_\sigma}\nB{v}_{K}\\
		&= \sum_{K\in\Mk} \sum_{\sigma\in\mathcal{F}_K\cap\Fki} m_\sigma\nLds{\pi_{0,\sigma}\mean{\bqk-\bbeta s_k}\cdot\bm{n}_\sigma}\nB{v}_K.
		\end{align}
		For the last term we similarly obtain
		\begin{equation}
		\operatorname{III} \leq \sum_{K\in\Mk} \sum_{\sigma\in\mathcal{F}_K\cap\Fkb} m_\sigma\nLds{\pi_{0,\sigma}(\bqk-\bbeta s_k)\cdot \bm{n}_\sigma}\nB{v}_K
		\end{equation}
		and hence
		\begin{equation}\label{eq:boundsBBAterms5}
		\operatorname{II}+\operatorname{III} \leq \sum_{K\in\Mk}\sum_{\sigma\in\mathcal{F}_K}\chi_\sigma m_\sigma\nLds{\pi_{0,\sigma}\mean{\bqk-\bbeta s_k}\cdot \bm{n}_\sigma} \nB{v}_K= \sum_{K\in\Mk}\eta_{U,K}\nB{v}_K,
		\end{equation}
	\end{subequations}
	where $\chi_\sigma=2$ if $\sigma\in\Fkb$ and $\chi_\sigma=1$ if $\sigma\in\Fki$. Plugging relations \cref{eq:boundsBBAterm0,eq:boundsBBAterms1,eq:boundsBBAterms2,eq:boundsBBAterms3,eq:boundsBBAterms4,eq:boundsBBAterms5} into \eqref{eq:integrBBA} we get the result.
\end{proof}

In \cref{lemma:boundBBA} we use \cref{lemma:cons} to deduce that
\begin{equation}\label{eq:weakcons}
\int_K (\nabla\cdot \btk+\nabla\cdot\bqk+(\mu-\nabla\cdot\bbeta)u_k) \dif \bx = \int_K f \dif \bx
\end{equation}
and hence \eqref{eq:boundsBBAterm0}. However, when the mesh has hanging nodes inside of the local domains \cref{lemma:cons} is not valid. Indeed, if $\hMk$ has hanging nodes, the fluxes $\hbtk,\hbqk$ must be constructed on a matching (free of hanging nodes) submesh $\overline{\mathcal{M}}_k$ of $\hMk$, otherwise they may fail to be in $H_{\divop}(\Omega_k)$. The constructed fluxes will satisfy relation \cref{eq:precons}, but since $\nabla\cdot\hbtk,\nabla\cdot\hbqk\in \mathbb{P}_\ar(K')$ for $K'\in\overline{\mathcal{M}}_k$ and $\overline{\mathcal{M}}_k$ is finer than $\hMk$, then we cannot conclude as we did in \cref{lemma:cons}. Nonetheless, \cref{eq:precons} still implies \cref{eq:weakcons}, which is enough to prove \cref{lemma:boundBBA}.

\subsection{Proof of the theorems}\label{sec:proofs}
Here we prove \cref{thm:energynormbound,thm:augmentednormbound}. We will consider $\OpB:H^1_0(\Omega)\times H^1_0(\Omega)\rightarrow\Rb$ defined in \eqref{eq:bform} for functions in $ H^1(\Mk)$.
\begin{proof}[Proof of \cref{thm:energynormbound}]
	It has been proved in \cite[Lemma 3.1]{Ern08} that for any $u_k\in V(\mathfrak{T}_k)$ and $u,s\in H^1_0(\Omega)$ it holds
	\begin{equation}
	\nB{u-u_k}\leq \nB{u_k-s}+|\OpB(u-u_k,v)+\OpB_A(u_k-s,v)|,
	\end{equation}
	with $v=(u-s)/\nB{u-s}$. Choosing $u$ as the exact solution to \cref{eq:weak}, $u_k$ given by \cref{alg:local}, $s=s_k$ from \cref{eq:defpot} and using \cref{lemma:boundBBA} gives the result.
\end{proof}

\begin{proof}[Proof of \cref{thm:augmentednormbound}]
	Since $u\in H^1_0(\Omega)$ it holds $\OpB_J(u,w)=0$ for all $w\in H^1_0(\Omega)$, using $\OpB_A\leq\OpB+|\OpB_S|$ we get
	\begin{equation}
	\nBp{u-u_k}\leq 2\nB{u-u_k}+\sup_{\substack{w\in H^1_0(\Omega)\\ \nB{w}=1}}(\OpB(u-u_k,w)-\OpB_J(u_k,w)).
	\end{equation}
	To conclude the proof we show that
	\begin{equation}\label{eq:supBBD}
	\sup_{\substack{w\in H^1_0(\Omega)\\ \nB{w}=1}}(\OpB(u-u_k,w)-\OpB_J(u_k,w))\leq \left(\sum_{K\in\Mk}\eta_{2,K}^2\right)^{1/2}.
	\end{equation}
	Following \cref{lemma:boundBBA}, we easily get
	\begin{multline}
	\OpB(u-u_k,w)-\OpB_J(u_k,w) \leq  \sum_{K\in\Mk}(\eta_{R,K}+\eta_{DF,K}+\tilde\eta_{C,1,K}+\eta_{\Gamma,2,K})\nB{w}_K\\
	+\sum_{K\in\Mk}\sum_{\sigma\in\mathcal{F}_K}\int_\sigma\pi_0 w (\bqk-\bbeta u_k)\cdot\bm{n}_K\dif \by-\OpB_J(u_k,w).
	\end{multline}
	The two last terms satisfy
	\begin{align}
	&\sum_{\sigma\in\Fk}\int_\sigma\jump{\pi_0 w(\bqk-\bbeta u_k)}\cdot \bm{n}_\sigma\dif \by-\OpB_J(u_k,w) \\
	&= \sum_{\sigma\in\Fk}\chi_\sigma\int_\sigma \jump{\pi_0 w}\pi_{0,\sigma}\mean{\bqk-\bbeta u_k}\cdot\bm{n}_\sigma\dif \by +\sum_{\sigma\in\Fki}\int_\sigma \mean{\pi_0 w}\jump{\pi_{0,\sigma}\bqk}\cdot\ns\dif \by \\
	&\leq\sum_{K\in\Mk}(\tilde\eta_{U,K}+\eta_{\Gamma,1,K})\nB{w}_K,
	\end{align}
	where in the last step we followed again \cref{lemma:boundBBA}.
\end{proof}

\subsection{Alternative error bounds}\label{sec:altbounds}
Our aim here is to explain how to avoid the assumption $c_{\bbeta,\mu,K}>0$ for all $K\in\Mk$ made in \cref{sec:errest,sec:ctedef}. This assumption is needed to define $\eta_{\Gamma,1,K}$, $\eta_{\Gamma,2,K}$ but can be avoided if \cref{eq:boundsBBAterms2,eq:boundsBBAterms4} are estimated differently. For \cref{eq:boundsBBAterms2}, using the trace inequality \cref{eq:trace} we get
\begin{equation}
\begin{aligned}
\left|\sum_{\gamma\in\Gamma_k}\int_\gamma \jump{\btk}\cdot\bm{n}_\gamma v\dif \by \right| &\leq  \frac{1}{2}\sum_{K\in\Mk}\sum_{\sigma\in\mathcal{F}_K\cap\Fki}\nLds{\jump{\btk}\cdot\bm{n}_\sigma}\nLds{v|_K} \\
&\leq \sum_{K\in\Mk}\tilde\eta_{\Gamma,2,K}(\nLdK{v}^2+h_K\nLdK{v}\nLddK{\nabla v})^{1/2},
\end{aligned}
\end{equation}
where
\begin{equation}
\tilde \eta_{\Gamma,2,K} = \frac{1}{2}\sum_{\sigma\in\mathcal{F}_K\cap\Fki}h_K^{-1/2}C_{t,K,\sigma}^{1/2}\nLds{\jump{\btk}\cdot\bm{n}_\sigma}.
\end{equation}
Setting $\tilde\eta_{\Gamma,2}^2=\sum_{K\in\Mk}\tilde \eta_{\Gamma,2,K}^2$, it yields
\begin{align}
\left|\sum_{\gamma\in\Gamma_k}\int_\gamma \jump{\btk}\cdot\bm{n}_\gamma v\dif \by \right| &\leq \tilde \eta_{\Gamma,2}\left(\sum_{K\in\Mk} \nLdK{v}^2+h_K\nLdK{v}\nLddK{\nabla v}\right)^{1/2}\\
&\leq \tilde \eta_{\Gamma,2} \left(\nLd{v}^2+h_{\Mk}\nLd{v}\nLdd{\nabla v}\right)^{1/2}.
\end{align}
Using the Poincaré inequality $\nLd{v}\leq d_\Omega\nLdd{\nabla v}$, where $d_\Omega$ is the diameter of $\Omega$, we get
\begin{equation}
\left|\sum_{\gamma\in\Gamma_k}\int_\gamma \jump{\btk}\cdot\bm{n}_\gamma v\dif \by \right| \leq \tilde \eta_{\Gamma,2} \left(d_\Omega^2+h_{\Mk}d_\Omega\right)^{1/2}\nLdd{\nabla v}\leq \tilde \eta_{\Gamma,2} c_A^{-1/2} \left(d_\Omega^2+h_{\Mk}d_\Omega\right)^{1/2}\nB{v},
\end{equation}
where $c_A$ is the minimal eigenvalue of $A(\bx)$ over $\Omega$. The same procedure can be used to replace \cref{eq:boundsBBAterms4} by a relation avoiding the term $c_{\bbeta,\mu,K}^{-1/2}$. The new bounds can be used to modify the results of \cref{thm:energynormbound,thm:augmentednormbound} and obtain error estimators when $\mu-\frac{1}{2}\nabla\cdot\bbeta>0$ is not satisfied.

\section{Numerical Experiments}\label{sec:num}
In order to study the properties and illustrate the performance of the local scheme we consider here several numerical examples.
First, in \cref{exp:conv}, we look at the convergence rates of the error estimators, focusing on the errors introduced by solving only local problems. Considering a local and a nonlocal problem, we also compare the size of the new error estimators $\eta_{\Gamma,1}$ and $\eta_{\Gamma,2}$ against the classical terms. We emphasize that we do not use the automatic subdomains' identification algorithm for this example, as the subdomains are fixed beforehand.
We also perform in \cref{exp:corner} an experiment for a smooth problem, where the errors are not localized, illustrating the role of $\eta_{\Gamma,1}$ and $\eta_{\Gamma,2}$. To do so, we also compare the local scheme against a classical adaptive method, where after each mesh refinement the problem is solved again on the whole domain. The classical method we refer to is given by \cref{alg:classical}.
Second, we investigate the efficiency of the new local algorithm for non smooth problems in \cref{exp:bndlayer_sym,exp:bndlayer_notsym}. For such examples, that are the target of our method, the local scheme performs better than the classical one. We conclude in \cref{exp:nonlin} with a nonlinear problem, where \cref{thm:energynormbound,thm:augmentednormbound} do not apply but \cref{alg:local} can nevertheless be employed in conjunction with a Newton scheme.

\begin{algorithm}[!tbhp]
	\caption{ClassicalScheme($\mathfrak{T}_1$)}
	\label{alg:classical}
	\begin{algorithmic}
		\State Find $\overline{u}_1\in V(\mathfrak{T}_1)$ solution to $\OpB(\overline{u}_1,v_1,\mathfrak{T}_1,0)=(f,v_1)_1$ for all $v_1\in V(\mathfrak{T}_1)$.
		\For{$k=2,\ldots,M$}
		\State $(\mathfrak{T}_k,\widehat{\mathfrak{T}}_{k}) = \text{LocalDomain}(\overline{u}_{k-1},\mathfrak{T}_{k-1})$.
		\State Find $\overline{u}_k\in V(\mathfrak{T}_k)$ solution to $\OpB(\overline{u}_k,v_k,\mathfrak{T}_k,0)=(f,v_k)_1$ for all $v_k\in V(\mathfrak{T}_k)$.
		\EndFor
	\end{algorithmic}
\end{algorithm}

In all the experiments we use $\mathbb P_1$ elements ($\ell=1$ in \eqref{eq:defVT}) on a simplicial mesh with penalization parameter $\eta_\sigma=10$, the diffusive and convective fluxes $\btk,\bqk$ are computed with $\ar=0$ (see \eqref{eq:RTN}). Furthermore, $\bbeta$ is always such that $\nabla\cdot\bbeta=0$. These choices give $\eta_{C,1,K}=\eta_{C,2,K}=\tilde\eta_{C,1,K}=0$. For an estimator $\eta_{*,K}$ we define $\eta_{*}^2=\sum_{K\in\M_k}\eta_{*,K}^2$.
Similarly to \cite{ESV10}, if $A=\varepsilon I_2$ and $\bbeta$ is constant then for $v_k\in H^1(\M_k)$ the augmented norm is well estimated by 
\begin{align}
\nBp{v_k}\leq \nB{v_k}_{\oplus'}&= \nB{v_k}+\varepsilon^{-1/2}\Vert\bbeta\Vert_2\nLd{v_k}\\
&\quad +\frac{1}{2}\left(\sum_{K\in\M_k}\left(\sum_{\sigma\in\mathcal{F}_K\cap\Fki}\tilde m_K^{1/2} C_{t,K,\sigma}^{1/2}\nLds{\jump{v_k}\bbeta\cdot\ns}\right)^2\right)^{1/2}.
\end{align}
Hence, in the numerical experiments we consider the computable norm $\nB{\cdot}_{\oplus'}$. The effectivity indexes of the error estimators $\eta$ and $\tilde \eta$ from \cref{thm:energynormbound,thm:augmentednormbound} are defined as
\begin{equation}\label{eq:effind}
\frac{\eta}{\nB{u-u_k}} \qquad\text{and}\qquad  \frac{\tilde\eta}{\nB{u-u_k}_{\oplus'}},
\end{equation}
respectively. For the solution $\overline u_k$ of the classical algorithm we use the error estimators $\eta$ and $\tilde \eta$ from \cite{ESV10}. They are equivalent to the estimators presented in this paper except that for $\overline u_k$ we have $\eta_{\Gamma,1,K}=\eta_{\Gamma,2,K}=0$, as in this case the reconstructed fluxes are in $H_{\divop}(\Omega)$. The effectivity indexes for $\overline u_k$ are as in \eqref{eq:effind} but with $u_k$ replaced by $\overline u_k$. The numerical experiments have been performed with the help of the C++ library \texttt{libMesh} \cite{KPS06}.

\subsection{Problem shifting from localized to nonlocalized errors}\label{exp:conv}
We investigate an example in two different locality regimes. First, the errors are confined in a small region and then they are distributed in the whole domain. We will study the effects of this transition on the size of the new error estimators $\eta_{\Gamma,1}$ and $\eta_{\Gamma,2}$.

We solve \eqref{eq:elliptic} in $\Omega=[0,1]\times [0,1]$ with $A=I_2$, $\bbeta=-(1,1)^\top$ and $\mu=1$. The force term $f$ is chosen so that the exact solution reads
\begin{equation}\label{eq:solsmooth}
	u(\bx)=e^{-\kappa ||\bx||_2}\left( x_1-\frac{1-e^{-\kappa x_1}}{1-e^{-\kappa}}\right)\left(x_2-\frac{1-e^{-\kappa x_2}}{1-e^{-\kappa}} \right),
\end{equation}
with $\kappa=100$ or $\kappa=10$. When $\kappa=100$ the solution has a narrow peak and the errors are localized around that region, when $\kappa=10$ the solution is smoother and the errors are distributed in the whole domain. See \cref{fig:sol_conv_100,fig:sol_conv_10}.

First, we investigate the convergence rate of the error estimators and then we comment on the size of the new error estimators $\eta_{\Gamma,1}$, $\eta_{\Gamma,2}$ when the errors are localized or not, i.e. when $\kappa=100$ or $\kappa=10$.
We define two domains $\Omega_1,\Omega_2$ as follows: $\Omega_1=\Omega$ and $\bx\in\Omega_2$ if $\Vert\bx\Vert_\infty\leq 1/2$, see \cref{fig:domains_priori}. 
\begin{figure}
	\begin{center}
		\begin{subfigure}[t]{\subfigsizesm\textwidth}
			\centering
			\includegraphics[trim=4cm 3cm 2.3cm 6.2cm, clip, width=\textwidth]{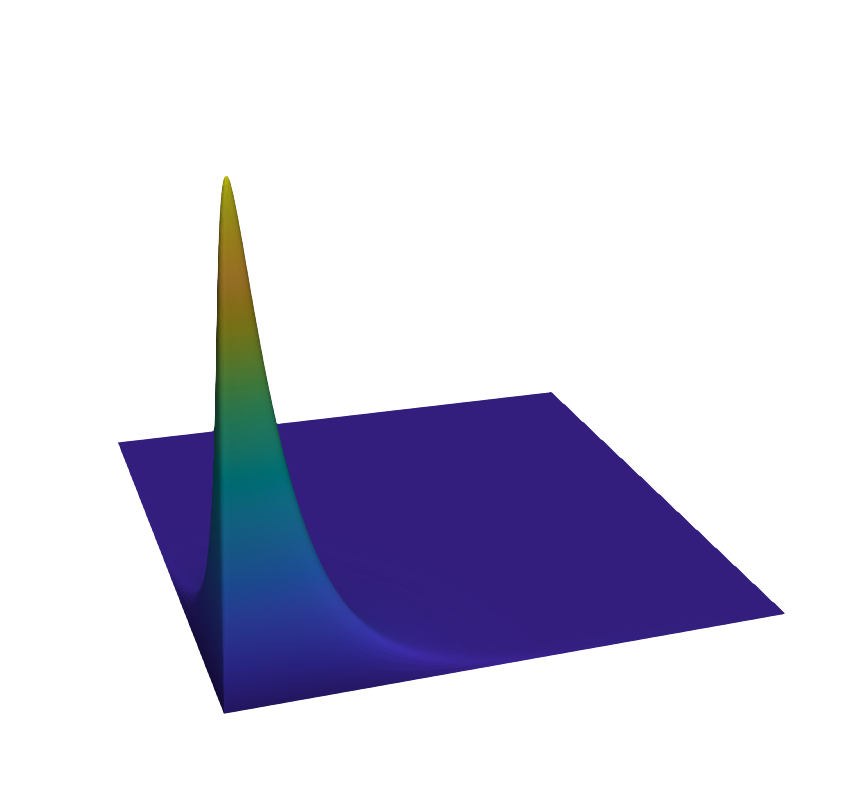}
			\caption{$u(\bx)$ for $\kappa=100$.}
			\label{fig:sol_conv_100}
		\end{subfigure}
		\begin{subfigure}[t]{\subfigsizesm\textwidth}
			\centering
			\includegraphics[trim=4cm 3cm 2.3cm 6.2cm, clip, width=\textwidth]{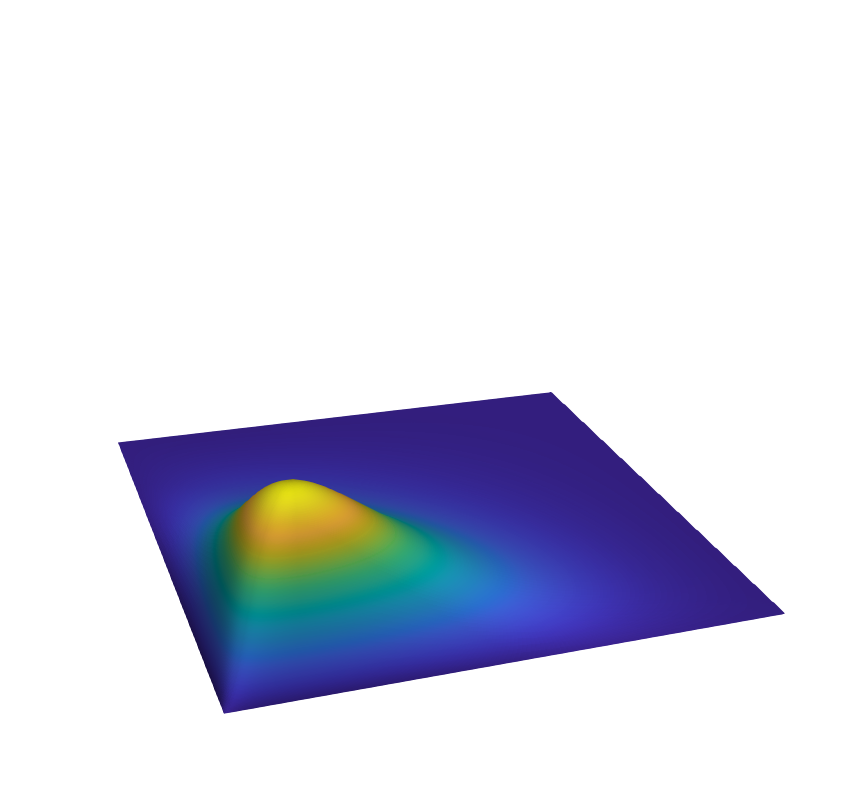}
			\caption{$u(\bx)$ for $\kappa=10$.}
			\label{fig:sol_conv_10}
		\end{subfigure}
	\begin{subfigure}[t]{\subfigsizesm\textwidth}
		\centering
		\begin{tikzpicture}
			\node at (0,0) {\includegraphics[trim=4cm 3cm 2.3cm 6.2cm, clip, width=\textwidth]{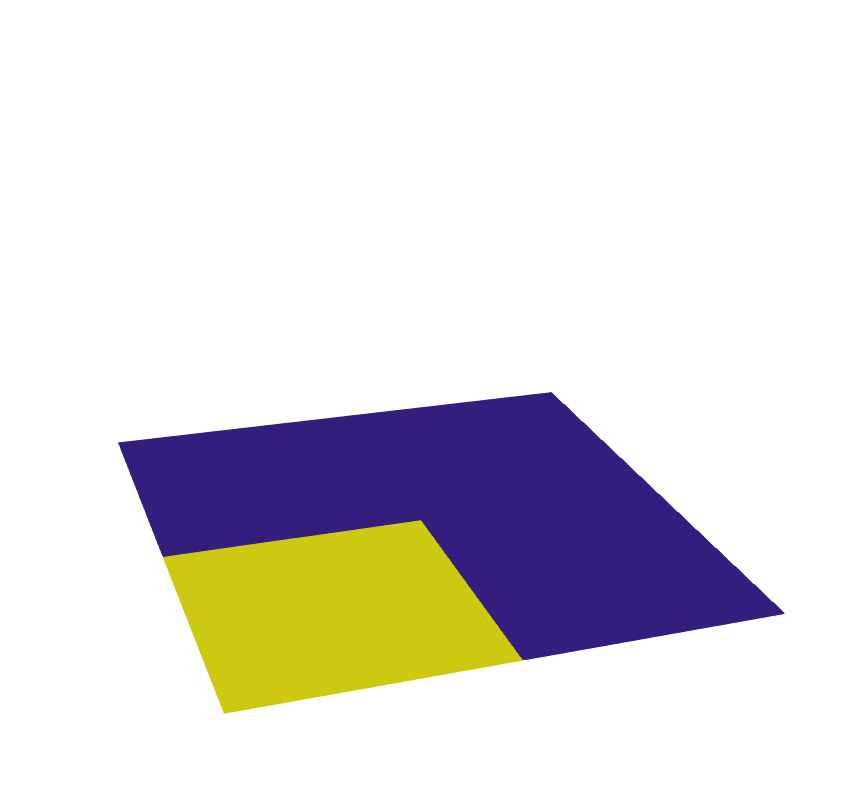}};
		\end{tikzpicture}
		\caption{Domains $\Omega_2\subset\Omega_1$.}
		\label{fig:domains_priori}
	\end{subfigure}
	\end{center}
	\caption{Solution $u(\bx)$ in \cref{eq:solsmooth} for two values of $\kappa$ and local domains $\Omega_1$, $\Omega_2$.}
	\label{fig:sol_dom_conv}
\end{figure}
Let $h$ be the grid size of $\widehat{\mathcal M}_1$, then the grid size of $\widehat{\mathcal{M}}_2$ is $h/2$. 
For different choices of $h$ we run \cref{alg:local} without calling LocalDomain, since the local domains and meshes are chosen beforehand. After the second iteration we compute the exact energy error and the error estimators. The results are reported in \cref{tab:conv_a,tab:conv_b} for $\kappa=100$ and $\kappa=10$, respectively. We recall that $\eta_{NC}$ measures the non conformity of $u_k$, $\eta_{R}$ measures the error in the energy conservation, $\eta_{DF}$ the difference between $-A\nabla u_k$ and the reconstructed diffusive flux $\btk$, $\eta_U,\tilde\eta_{U}$ are upwind errors and $\eta_{\Gamma,1},\eta_{\Gamma,2}$ measure the jumps of $\btk,\bqk$ across subdomains boundaries.

\begin{table}
	\csvreader[
	before reading=\small\centering\sisetup{table-number-alignment=left,table-parse-only,zero-decimal-to-integer,round-mode=figures,round-precision=2,output-exponent-marker = \text{e},fixed-exponent=0},
	tabular={lSSSSSSSS},head to column names,
	table head=\toprule $h$ & \text{$\nB{u-u_k}$} &$\eta_{NC}$ & $\eta_{R}$ & $\eta_{DF}$ & $\eta_{U}$ & \text{$\tilde{\eta}_{U}$} & $\eta_{\Gamma,1}$ & $\eta_{\Gamma,2}$ \\\midrule,
	late after last line=\\\toprule Order & $1$ & $1$ & $2$ & $1$ & $2$ & $2$ & \text{$0.5$} & \text{$0.5$} \\\midrule]
	{data/corner/local_sing_1e-2_diff_1e0_a_posteriori_data.csv}{}
	{$2^{-{\the\numexpr\thecsvrow+5\relax}}$ & \erren & \etaNC & \etaR & \etaDF & \etaU & \etatU & \etaGu & \etaGd}
	\caption{Convergence rate of error estimators for $\kappa=100$.}
	\label{tab:conv_a}
\end{table}
\begin{table}
	\csvreader[
	before reading=\small\centering\sisetup{table-number-alignment=left,table-parse-only,zero-decimal-to-integer,round-mode=figures,round-precision=2,output-exponent-marker = \text{e},fixed-exponent=0},
	tabular={lSSSSSSSS},
	head to column names,
	table head=\toprule $h$ & \text{$\nB{u-u_k}$} & $\eta_{NC}$ & $\eta_{R}$ & $\eta_{DF}$ & $\eta_{U}$ & \text{$\tilde{\eta}_{U}$} & $\eta_{\Gamma,1}$ & $\eta_{\Gamma,2}$ \\\midrule,
	late after last line=\\\toprule Order & $1$ & $1$ & $2$ & $1$ & $1.5$ & $1.5$ & \text{$0.5$} & \text{$0.5$} \\\midrule]
	{data/corner/local_sing_1e-1_diff_1e0_a_posteriori_data.csv}{}
	{$2^{-{\the\numexpr\thecsvrow+5\relax}}$ & \erren & \etaNC & \etaR & \etaDF & \etaU & \etatU & \etaGu & \etaGd}
	\caption{Convergence rate of error estimators for $\kappa=10$.}
	\label{tab:conv_b}
\end{table}

We see that the energy error converges with order one, as predicted by the a priori error analysis of \cite{AbR19}. We also observe that the error estimators $\eta_{\Gamma,1}$ and $\eta_{\Gamma,2}$ measuring the reconstructed fluxes' jumps across subdomains' boundaries have a lower rate of convergence. Therefore, the error estimators are not efficient, in the sense that they cannot be bounded from above by the energy error multiplied by a mesh-size independent constant.
However, the relative size of $\eta_{\Gamma,1}$, $\eta_{\Gamma,2}$ compared to the other estimators gives an information on the suitability of the local scheme:
\begin{itemize}
	\item if $\eta_{\Gamma,1}$, $\eta_{\Gamma,2}$ are comparable to the other estimators one should use the local scheme. The typical situation is when the errors are localized, with local regions covering the large error regions (see \cref{fig:sol_conv_100,fig:domains_priori} and \cref{tab:conv_a});
	\item if the relative size of $\eta_{\Gamma,1}$, $\eta_{\Gamma,2}$ is larger than the other estimators, this is an indication that one should switch from local to classical method. The typical situation is when the errors are not (or less) localized (see \cref{fig:sol_conv_10,fig:domains_priori} and \cref{tab:conv_b}). On purpose we did choose a local domain that is too small to cover the error region.
\end{itemize}

In the next experiments we let the scheme select the local subdomains on the fly, using the fixed energy fraction marking strategy \cite[Section 4.2]{Dor96} implemented in the $\text{LocalDomain}(u_k,\mathfrak{T}_k)$ routine of \cref{alg:local}. First, we revisit the example of \cref{exp:conv}. Second, we consider two examples where the errors are localized, illustrating the efficiency of the algorithm.

\subsection{A nonlocal smooth problem}\label{exp:corner}
Considering the same problem as in \cref{exp:conv} with $\kappa=10$, we run the local and classical schemes for $k=1,\ldots,15$ starting with a uniform mesh of 128 elements. Here, we employ the automatic subdomains' identification algorithm and the goal is to show when one should switch from local to nonlocal methods.
As the error is distributed in the whole domain, it is not possible to chose the subdomains $\Omega_{k}$ so that the errors at their boundaries are negligible. Consequently, the error estimators $\eta_{\Gamma,1}$, $\eta_{\Gamma,2}$ will dominate.
Indeed, we see in \cref{tab:dom} that the error estimators $\eta_{\Gamma,1}$, $\eta_{\Gamma,2}$ measuring the reconstructed fluxes' jumps dominate the other estimators.
\begin{table}
	\csvreader[
	before reading=\small\centering\sisetup{table-number-alignment=left,table-parse-only,zero-decimal-to-integer,round-mode=figures,round-precision=2,output-exponent-marker = \text{e},fixed-exponent=0},
	tabular={lSSSSSSSS},
	head to column names,
	table head=\toprule $k$ & \text{$\nB{u-u_k}$} & $\eta_{NC}$ & $\eta_{R}$ & $\eta_{DF}$ & $\eta_{U}$ & \text{$\tilde{\eta}_{U}$} & $\eta_{\Gamma,1}$ & $\eta_{\Gamma,2}$ \\\midrule,
	]
	{data/corner/SPA2FFM_sing_1_diff_0_b_1_nref_3_lay_21_a_posteriori_data_first_5_levels.csv}{}
	{\level & \erren & \etaNC & \etaR & \etaDF & \etaU & \etatU & \etaGu & \etaGd}
	\caption{\Cref{exp:corner}, nonlocal smooth problem. Dominance of $\eta_{\Gamma,1}$ and $\eta_{\Gamma,2}$ over the other error estimators. Only the results of the first five iterations are shown, i.e. $k\leq 5$.}
	\label{tab:dom}
\end{table}
This phenomenon brings two issues into the algorithm. First, the effectivity index of the local scheme is significantly larger than the index for the classical scheme, as we illustrate in \cref{fig:corner_effind_eta}. Second, the marking error estimator $\eta_{M,K}$ \cref{eq:marketa} will be larger at the boundaries of the local domains than in the large error regions; indeed, we see in \cref{fig:corner_doms} that the local domain $\Omega_4$ chosen by the algorithm do not correspond to a large error region but is in a neighborhood of the boundary of $\Omega_3$, where $\eta_{\Gamma,1}$, $\eta_{\Gamma,2}$ are large. For this reason the algorithm in unable to detect the high error regions and we see in \cref{fig:corner_effenerr}, where we show the computational cost in function of the energy errors, that the error of the local method stagnates.
\begin{figure}
	\begin{center}
		\begin{subfigure}[t]{\subfigsize\textwidth}
			\centering
			\begin{tikzpicture}[scale=\plotimscale]
				\begin{semilogyaxis}[height=\aspectratio*\plotimsized\textwidth,width=\plotimsized\textwidth,legend style={at={(0,1)},anchor=north west},xlabel={Iteration $k$}, ylabel={Effectivity index of $\eta$},label style={font=\normalsize},tick label style={font=\normalsize},legend image post style={scale=\legendmarkscale},legend style={nodes={scale=\legendfontscale, transform shape},draw=none}, log basis y=10,ymin=1,ymax=400]
					\addplot+[color=OrangeRed,mark=o,line width=\plotlinewidth pt,mark size=\plotmarksizeu pt] table [x=level,y=eff,col sep=comma] 
					{data/corner/SPA2FFM_sing_1_diff_0_b_1_nref_3_lay_21_a_posteriori_data.csv};\addlegendentry{Local}
					\addplot+[color=ForestGreen,mark=star,line width=\plotlinewidth pt,mark size=\plotmarksizeu pt] table[x=level,y=eff,col sep=comma] 
					{data/corner/SPA1_sing_1_diff_0_b_1_nref_3_lay_21_a_posteriori_data.csv};\addlegendentry{Classical}
				\end{semilogyaxis}
			\end{tikzpicture}
			\caption{Effectivity index of $\eta$.}
			\label{fig:corner_effind_eta}
		\end{subfigure}\hfill
		\begin{subfigure}[t]{\subfigsize\textwidth}
			\centering
			\begin{tikzpicture}[scale=\plotimscale]
				\begin{loglogaxis}[height=\aspectratio*\plotimsized\textwidth,width=\plotimsized\textwidth, x dir=reverse,legend style={at={(0,1)},anchor=north west},
					xlabel={Energy norm error.}, ylabel={GMRES cost [sec.]},log basis x={2},label style={font=\normalsize},tick label style={font=\normalsize},legend image post style={scale=\legendmarkscale},legend style={nodes={scale=\legendfontscale, transform shape},draw=none}]
					\addplot+[color=OrangeRed,mark=o,line width=\plotlinewidth pt,mark size=\plotmarksizeu pt] table [x=erren,y=linsolvertot,col sep=comma,select coords between index={0}{14}] 
					{data/corner/SPA2FFM_sing_1_diff_0_b_1_nref_3_lay_21_a_posteriori_data.csv};\addlegendentry{Local}
					\addplot+[color=ForestGreen,mark=star,line width=\plotlinewidth pt,mark size=\plotmarksizeu pt] table[x=erren,y=linsolvertot,col sep=comma,select coords between index={0}{14}] 
					{data/corner/SPA1_sing_1_diff_0_b_1_nref_3_lay_21_a_posteriori_data.csv};\addlegendentry{Classical}
				\end{loglogaxis}
			\end{tikzpicture}
			\caption{GMRES cost versus energy norm error.}
			\label{fig:corner_effenerr}
		\end{subfigure}
	\end{center}
	\caption{\Cref{exp:corner}, nonlocal smooth problem. Effectivity indexes in function of the iteration number.}
\end{figure}
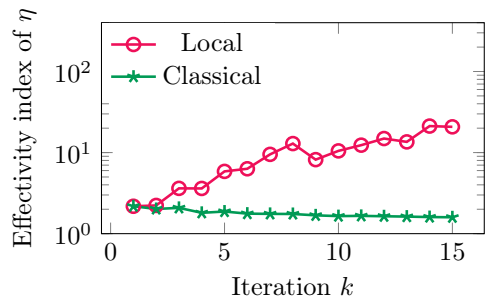
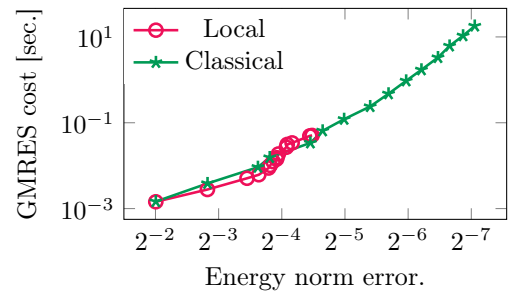
\begin{figure}
	\begin{center}
		\begin{tikzpicture}
			\node at (0,0) {\includegraphics[trim=4cm 3cm 2.3cm 6.2cm, clip, width=0.22\textheight]{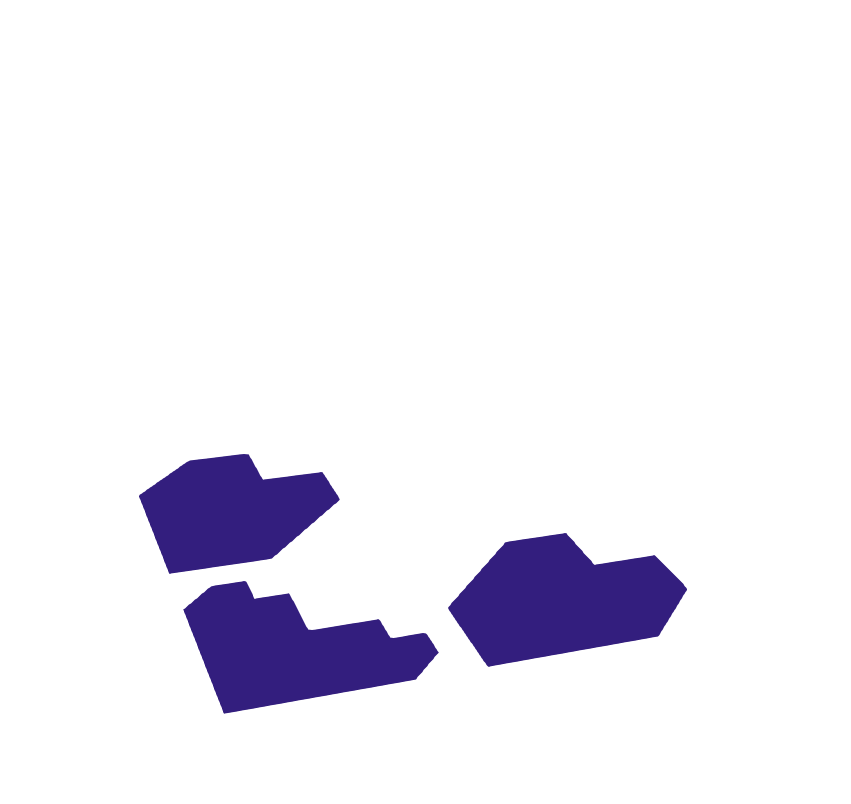}};
			\node[opacity=0.7] at (0,0) {\includegraphics[trim=4cm 3cm 2.3cm 6.2cm, clip, width=0.22\textheight]{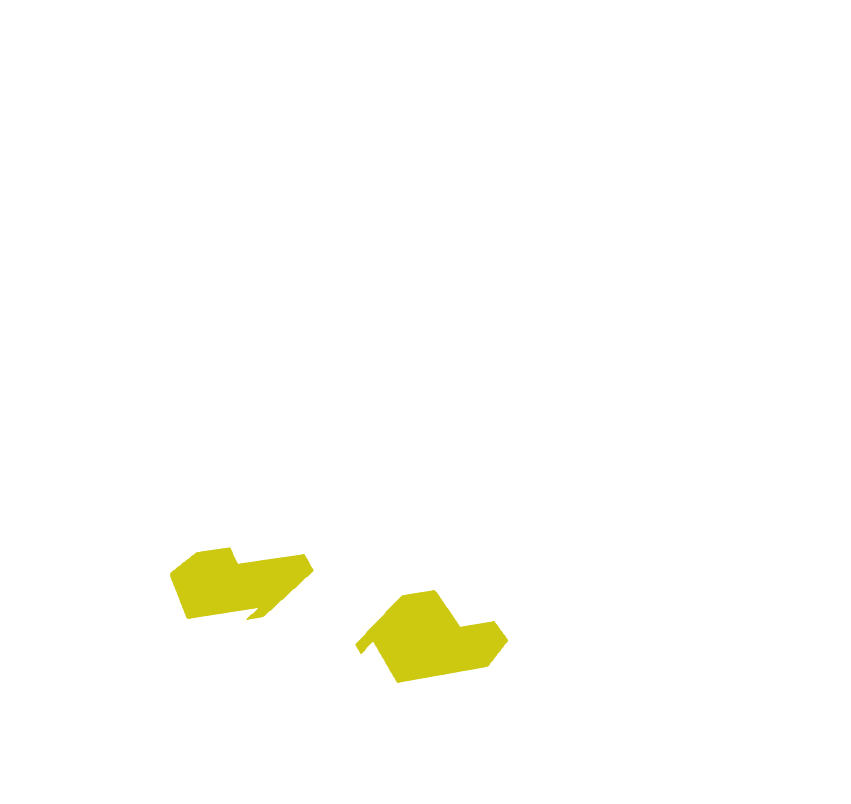}};
		\end{tikzpicture}
		\caption{Local domains $\Omega_3$ (darker) and $\Omega_4$ (brighter).}
		\label{fig:corner_doms}
	\end{center}
\end{figure}

This example shows that if the errors are not localized then the estimators $\eta_{\Gamma,1}$, $\eta_{\Gamma,2}$ dominate, the local scheme becomes inefficient and a classical \emph{global} method should be preferred over a local method. However, our algorithm allows to monitor the size of the error estimators $\eta_{\Gamma,1}$ and $\eta_{\Gamma,2}$ and when these error estimators start to dominate the other error indicators (as seen in \cref{tab:dom}) it provides a switching criteria.

\subsection{Reaction dominated problem}\label{exp:bndlayer_sym}
In our next example we consider a symmetric problem and want to compare the local and classical schemes (\cref{alg:local,alg:classical}) in a singularly perturbed regime. We investigate the efficiency measured as the computational cost and analyze their effectivity indexes. The setting is as follows: we solve \eqref{eq:elliptic} in $\Omega=[0,1]\times [0,1]$ with $\varepsilon=10^{-6}$, $A=\varepsilon I_2$, $\bbeta=(0,0)^\top$, $\mu=1$ and we choose $f$ such that the exact solution is given by
\begin{equation}\label{eq:bndlayer}
u(\bx)=e^{x_1+x_2}\left( x_1-\frac{1-e^{-\zeta x_1}}{1-e^{-\zeta}}\right)\left(x_2-\frac{1-e^{-\zeta x_2}}{1-e^{-\zeta}} \right),
\end{equation}
where $\zeta=10^{4}$. The solution is illustrated in \cref{fig:bndlayer_sol}. 

\begin{figure}
	\begin{center}
		\begin{subfigure}[t]{\subfigsize\textwidth}
			\centering
			\includegraphics[trim=0cm 0cm 0cm 0cm, clip, width=0.22\textheight]{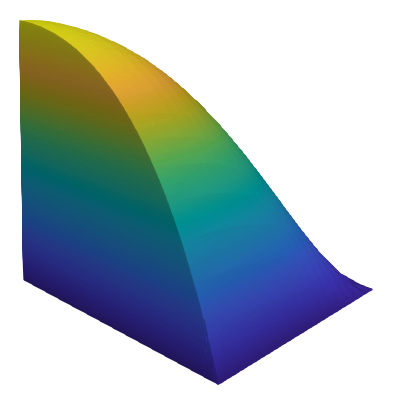}
			\caption{Solution $u(\bx)$.}
			\label{fig:bndlayer_sol}
		\end{subfigure}
		\begin{subfigure}[t]{\subfigsize\textwidth}
			\centering
			\begin{tikzpicture}[spy using outlines= {circle, connect spies,every spy on node/.append style={thick}}]
				\coordinate (spypoint) at (-0.1,0.15);
				\coordinate (magnifyglass) at (1.5,0.5);
				\coordinate (spypoint_bis) at (-0.03,-0.55);
				\coordinate (magnifyglass_bis) at (1.5,-1.2);
				\node at (0,0) {\includegraphics[trim=0cm 0cm 0cm 0cm, clip, width=0.22\textheight]{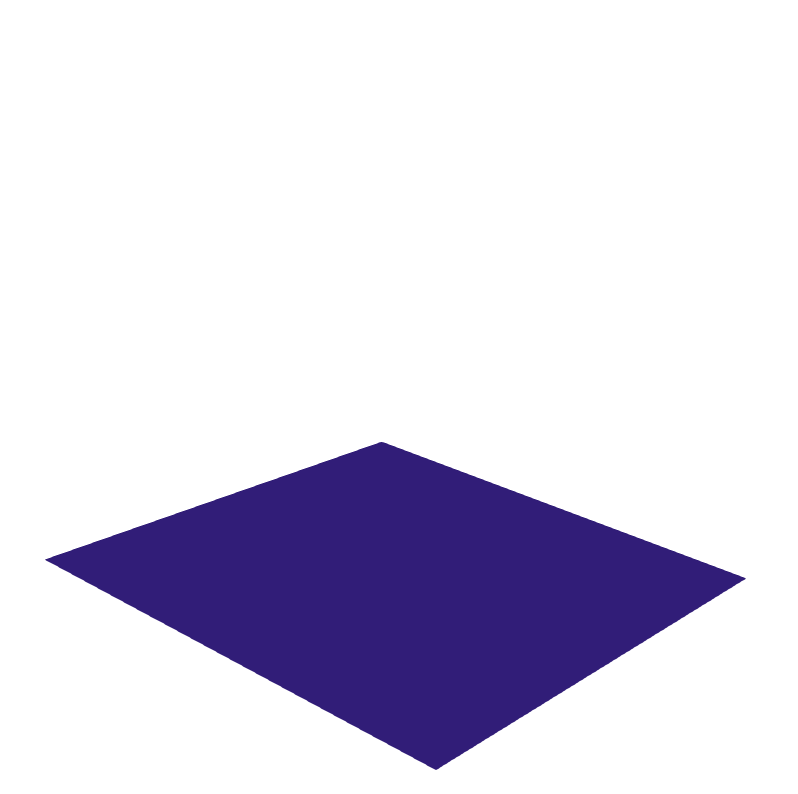}};
				\node at (0,0) {\includegraphics[trim=0cm 0cm 0cm 0cm, clip, width=0.22\textheight]{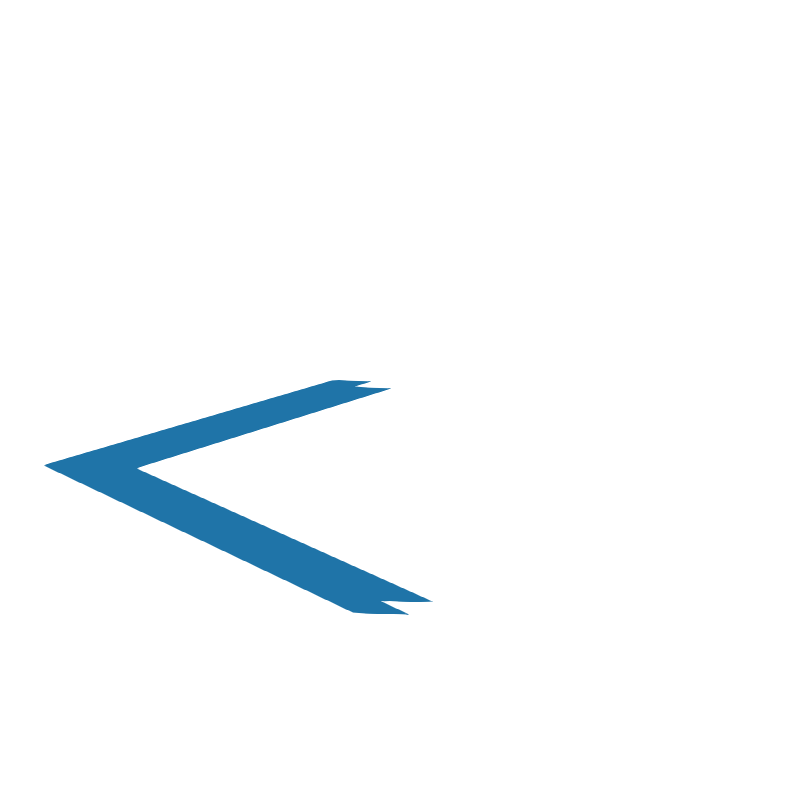}};
				\node at (0,0) {\includegraphics[trim=0cm 0cm 0cm 0cm, clip, width=0.22\textheight]{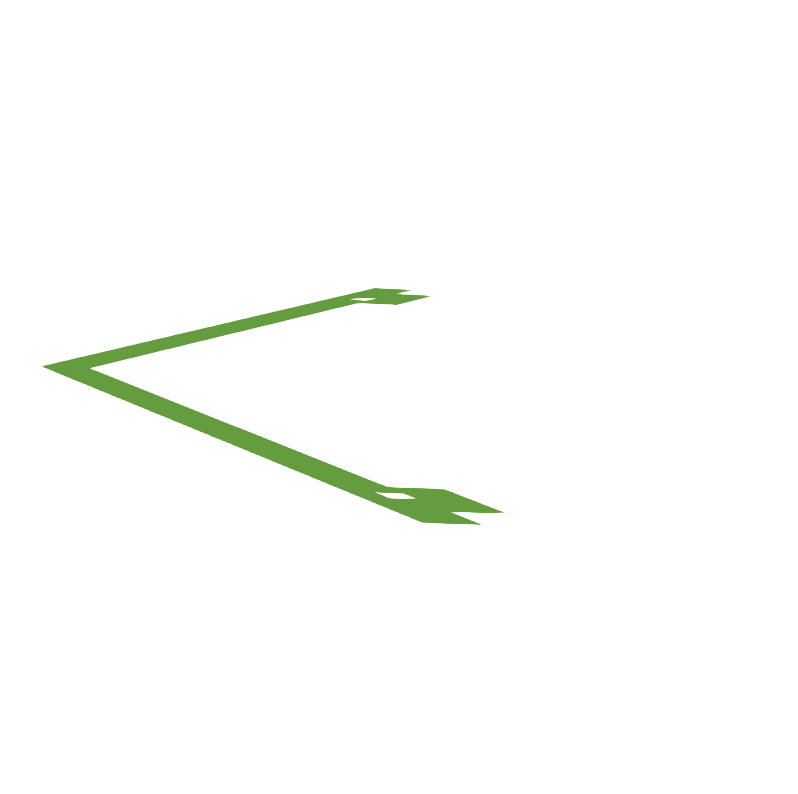}};
				\node at (0,0) {\includegraphics[trim=0cm 0cm 0cm 0cm, clip, width=0.22\textheight]{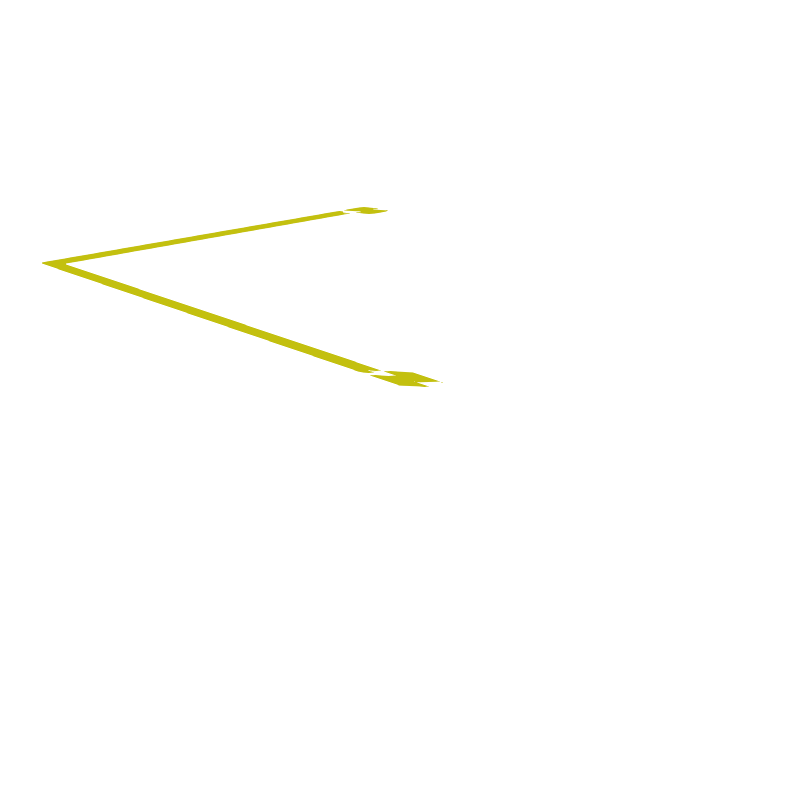}};
				\spy [WildStrawberry, size=1.3cm, magnification=4] on (spypoint) in node[fill=white] at (magnifyglass);
				\spy [WildStrawberry, size=1.3cm, magnification=4] on (spypoint_bis) in node[fill=white] at (magnifyglass_bis);
			\end{tikzpicture}
			\caption{First local domains $\Omega_k$, $k=1,\ldots,4$.}
			\label{fig:bndlayer_doms}
		\end{subfigure}
	\end{center}
	\caption{Solution $u(\bx)$ in \eqref{eq:bndlayer} of the reaction dominated problem and first local domains chosen by the error estimators.}
\end{figure}

Since the problem is symmetric we have $\nB{{\cdot}}=\nBp{{\cdot}}$, but their related error estimators $\eta$ and $\tilde\eta$, respectively, satisfy $\tilde\eta>\eta$ and hence the effectivity index of $\eta$ will be lower (see  \cref{thm:energynormbound,thm:augmentednormbound}). 

Starting from a coarse mesh (128 elements), we let the two algorithms run for $k=1,\ldots,20$. In \cref{fig:bndlayer_doms} we show the first four subdomains $\Omega_k$ chosen by the local scheme. Note that the local domain $\Omega_4$ chosen by the algorithm is disconnected, while subdomain $\Omega_3$ has an hole; as is allowed by the theory. Several of the subsequent subdomains (not displayed) are also disconnected or contain holes. The first iterations are needed to capture the boundary layer and reach the convergence regime, hence we will plot the results for $k\geq 7$. The most expensive part of the code is the solution of linear systems by means of the conjugate gradient (CG) method preconditioned with the incomplete Cholesky factorization, followed by the computation of the potential and fluxes reconstruction and then by the evaluation of the error estimators. In the local scheme, the time spent doing these tasks is proportional to the number of elements inside each subdomain $\Ok$. For the classical scheme, the cost of these tasks depends on the total number of elements in the mesh. Since the CG routine is the most expensive part, we take the time spent in it as an indicator for the computational cost.

In \cref{fig:bndlayer_sym_etacost}, we plot the simulation cost against the error estimator $\eta$, for both the local and classical algorithms. Each circle or star in the figure represents an iteration $k$. We observe that the local scheme provides similar error bounds but at a smaller cost. The effectivity index of $\eta$ at each iteration $k$ is shown in \cref{fig:bndlayer_sym_etaeffind}, we can observe that the local scheme has an effectivity index similar to the classical scheme.
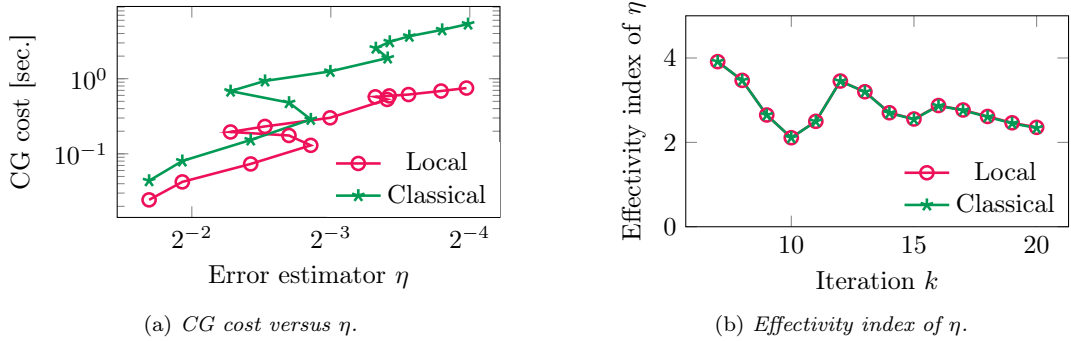
\begin{figure}
	\begin{center}
		\begin{subfigure}[t]{\subfigsize\textwidth}
			\centering
			\begin{tikzpicture}[scale=\plotimscale]
			\begin{loglogaxis}[height=\aspectratio*\plotimsized\textwidth,width=\plotimsized\textwidth, x dir=reverse,legend style={at={(1,0)},anchor=south east},
			xlabel={Error estimator $\eta$}, ylabel={CG cost [sec.]},log basis x={2},label style={font=\normalsize},tick label style={font=\normalsize},legend image post style={scale=\legendmarkscale},legend style={nodes={scale=\legendfontscale, transform shape},draw=none}]
			\addplot[color=OrangeRed,mark=o,line width=\plotlinewidth pt,mark size=\plotmarksizeu pt] table [x=etafull,y=linsolvertot,col sep=comma,select coords between index={6}{19}] 
			{data/bndlayer/SPA2FFM_sing_4_diff_6_b_0_nref_3_lay_21_a_posteriori_data.csv};\addlegendentry{Local}
			\addplot[color=ForestGreen,mark=star,line width=\plotlinewidth pt,mark size=\plotmarksizeu pt] table[x=etafull,y=linsolvertot,col sep=comma,select coords between index={6}{19}] 
			{data/bndlayer/SPA1_sing_4_diff_6_b_0_nref_3_lay_21_a_posteriori_data.csv};\addlegendentry{Classical}
			\end{loglogaxis}
			\end{tikzpicture}
			\caption{CG cost versus $\eta$.}
			\label{fig:bndlayer_sym_etacost}
		\end{subfigure}\hfill
		\begin{subfigure}[t]{\subfigsize\textwidth}
			\centering
			\begin{tikzpicture}[scale=\plotimscale]
			\begin{axis}[height=\aspectratio*\plotimsized\textwidth,width=\plotimsized\textwidth,legend style={at={(1,0)},anchor=south east},
			xlabel={Iteration $k$}, ylabel={Effectivity index of $\eta$},ymin=0,ymax=5,label style={font=\normalsize},tick label style={font=\normalsize},legend image post style={scale=\legendmarkscale},legend style={nodes={scale=\legendfontscale, transform shape},draw=none}]
			\addplot[color=OrangeRed,mark=o,line width=\plotlinewidth pt,mark size=\plotmarksizeu pt] table [x=level,y=eff,col sep=comma,select coords between index={6}{19}] 
			{data/bndlayer/SPA2FFM_sing_4_diff_6_b_0_nref_3_lay_21_a_posteriori_data.csv};\addlegendentry{Local}
			\addplot[color=ForestGreen,mark=star,line width=\plotlinewidth pt,mark size=\plotmarksizeu pt] table[x=level,y=eff,col sep=comma,select coords between index={6}{19}] 
			{data/bndlayer/SPA1_sing_4_diff_6_b_0_nref_3_lay_21_a_posteriori_data.csv};\addlegendentry{Classical}
			\end{axis}
			\end{tikzpicture}
			\caption{Effectivity index of $\eta$.}
			\label{fig:bndlayer_sym_etaeffind}
		\end{subfigure}
	\end{center}
	\caption{\Cref{exp:bndlayer_sym}, reaction dominated problem. Computational cost vs. $\eta$ and effectivity index in function of the iteration number.}
	\label{fig:bndlayer_sym_etacost_etaeffind}
\end{figure}

In \cref{fig:bndlayer_sym_effenerr} we exhibit the cost against the exact energy error and we notice that for some values of $k$ the mesh is refined but the error stays almost constant. This phenomenon significantly increases the simulation cost of the classical scheme without improving the solution. In contrast, the cost of the local scheme increases only marginally. Dividing the two curves in \cref{fig:bndlayer_sym_effenerr} we obtain the relative speed-up, which is plotted in \cref{fig:bndlayer_sym_speedup}. We note that as the error decreases the local scheme becomes faster than the classical scheme.
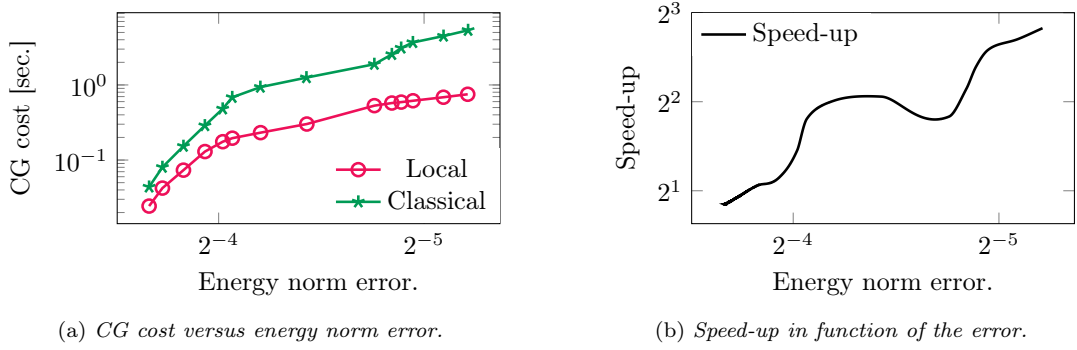
\begin{figure}
	\begin{center}
		\begin{subfigure}[t]{\subfigsize\textwidth}
			\centering
			\begin{tikzpicture}[scale=\plotimscale]
			\begin{loglogaxis}[height=\aspectratio*\plotimsized\textwidth,width=\plotimsized\textwidth, x dir=reverse,legend style={at={(1,0)},anchor=south east},
			xlabel={Energy norm error.}, ylabel={CG cost [sec.]},log basis x={2},label style={font=\normalsize},tick label style={font=\normalsize},legend image post style={scale=\legendmarkscale},legend style={nodes={scale=\legendfontscale, transform shape},draw=none}]
			\addplot[color=OrangeRed,mark=o,line width=\plotlinewidth pt,mark size=\plotmarksizeu pt] table [x=erren,y=linsolvertot,col sep=comma,select coords between index={6}{19}] 
			{data/bndlayer/SPA2FFM_sing_4_diff_6_b_0_nref_3_lay_21_a_posteriori_data.csv};\addlegendentry{Local}
			\addplot[color=ForestGreen,mark=star,line width=\plotlinewidth pt,mark size=\plotmarksizeu pt] table[x=erren,y=linsolvertot,col sep=comma,select coords between index={6}{19}] 
			{data/bndlayer/SPA1_sing_4_diff_6_b_0_nref_3_lay_21_a_posteriori_data.csv};\addlegendentry{Classical}
			\end{loglogaxis}
			\end{tikzpicture}
			\caption{CG cost versus energy norm error.}
			\label{fig:bndlayer_sym_effenerr}
		\end{subfigure}\hfill
		\begin{subfigure}[t]{\subfigsize\textwidth}
			\centering
			\begin{tikzpicture}[scale=\plotimscale]
			\begin{loglogaxis}[height=\aspectratio*\plotimsized\textwidth,width=\plotimsized\textwidth,legend style={at={(0,1)},anchor=north west},
			xlabel={Energy norm error.}, ylabel={Speed-up}, x dir=reverse,log basis x={2},log basis y={2},ymax=8, ytick={2,4,8,16},label style={font=\normalsize},tick label style={font=\normalsize},legend image post style={scale=\legendmarkscale},legend style={nodes={scale=\legendfontscale, transform shape},draw=none}]
			\addplot[color=black,line width=\plotlinewidth pt,mark=none] table[x=erren,y=speeden,col sep=comma] 
			{data/bndlayer/speedup_sing_4_diff_6_b_0_nref_3_lay_21.csv};\addlegendentry{Speed-up}
			\end{loglogaxis}
			\end{tikzpicture}
			\caption{Speed-up in function of the error.}
			\label{fig:bndlayer_sym_speedup}
		\end{subfigure}
	\end{center}
	\caption{\Cref{exp:bndlayer_sym}, reaction dominated problem. Computational cost vs. energy norm error and speed-up in function of the error.}
	\label{fig:bndlayer_sym_effenerr_speedup}
\end{figure}
In \cref{fig:bndlayer_sym_etateffind} we plot the effectivity index of $\tilde\eta$. As expected, for this symmetric problem, it is worse than the effectivity of $\eta$. Finally, we run the same experiment but for different diffusion coefficients $\varepsilon=10^{-4},10^{-6},10^{-8}$ and display in \cref{fig:bndlayer_sym_eta_diff_eps} the effectivity index of $\eta$. We note that it always remains below 4.
\begin{figure}
	\begin{center}
		\begin{subfigure}[t]{\subfigsize\textwidth}
			\centering
			\begin{tikzpicture}[scale=\plotimscale]
			\begin{axis}[height=\aspectratio*\plotimsized\textwidth,width=\plotimsized\textwidth,legend style={at={(1,0)},anchor=south east},
			xlabel={Iteration $k$}, ylabel={Effectivity index of $\tilde\eta$},ymin=0,ymax=15,,label style={font=\normalsize},tick label style={font=\normalsize},legend image post style={scale=\legendmarkscale},legend style={nodes={scale=\legendfontscale, transform shape},draw=none}]
			\addplot[color=OrangeRed,mark=o,line width=\plotlinewidth pt,mark size=\plotmarksizeu pt] table [x=level,y=efft,col sep=comma,select coords between index={6}{19}] 
			{data/bndlayer/SPA2FFM_sing_4_diff_6_b_0_nref_3_lay_21_a_posteriori_data.csv};\addlegendentry{Local}
			\addplot[color=ForestGreen,mark=star,line width=\plotlinewidth pt,mark size=\plotmarksizeu pt] table[x=level,y=efft,col sep=comma,select coords between index={6}{19}] 
			{data/bndlayer/SPA1_sing_4_diff_6_b_0_nref_3_lay_21_a_posteriori_data.csv};\addlegendentry{Classical}
			\end{axis}
			\end{tikzpicture}
			\caption{Effectivity index of $\tilde\eta$.}
			\label{fig:bndlayer_sym_etateffind}
		\end{subfigure}\hfill
		\begin{subfigure}[t]{\subfigsize\textwidth}
			\centering
			\begin{tikzpicture}[scale=\plotimscale]
			\begin{axis}[height=\aspectratio*\plotimsized\textwidth,width=\plotimsized\textwidth,legend columns=2, ,legend style={at={(0,0)},anchor=south west,draw=none,fill=none},
			xlabel={Iteration $k$}, ylabel={Effectivity index of $\eta$},ymin=0,ymax=4.0,label style={font=\normalsize},tick label style={font=\normalsize},legend image post style={scale=\legendmarkscale},legend style={nodes={scale=\legendfontscale, transform shape},draw=none}]
			\addplot[color=ForestGreen,mark=star,line width=\plotlinewidth pt,mark size=\plotmarksizeu pt] table[x=level,y=eff,col sep=comma,select coords between index={6}{19}] 
			{data/bndlayer/SPA2FFM_sing_4_diff_4_b_0_nref_3_lay_21_a_posteriori_data.csv};\addlegendentry{$\varepsilon=10^{-4}$}
			\addlegendimage{empty legend}\addlegendentry{}
			\addplot[color=OrangeRed,mark=o,line width=\plotlinewidth pt,mark size=\plotmarksizeu pt] table [x=level,y=eff,col sep=comma,select coords between index={6}{19}] 
			{data/bndlayer/SPA2FFM_sing_4_diff_6_b_0_nref_3_lay_21_a_posteriori_data.csv};\addlegendentry{$\varepsilon=10^{-6}$}
			\addplot[color=NavyBlue,mark=triangle,line width=\plotlinewidth pt,mark size=\plotmarksizeu pt] table[x=level,y=eff,col sep=comma,select coords between index={6}{19}] 
			{data/bndlayer/SPA2FFM_sing_4_diff_8_b_0_nref_3_lay_21_a_posteriori_data.csv};\addlegendentry{$\varepsilon=10^{-8}$}
			\end{axis}
			\end{tikzpicture}
		\caption{Effectivity index of $\eta$ for different diffusion coefficients $\varepsilon$.}
		\label{fig:bndlayer_sym_eta_diff_eps}
		\end{subfigure}
	\end{center}
	\caption{\Cref{exp:bndlayer_sym}, reaction dominated problem. Effectivity index of $\tilde\eta$ and of $\eta$ but for different diffusion coefficients $\varepsilon$.}
	\label{fig:bndlayer_sym_effetat_effeta}
\end{figure}
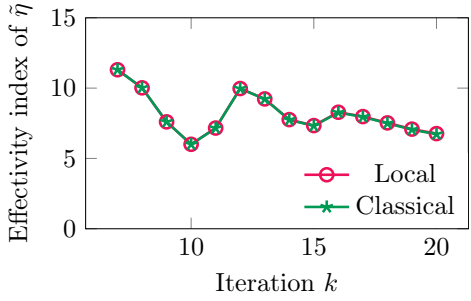
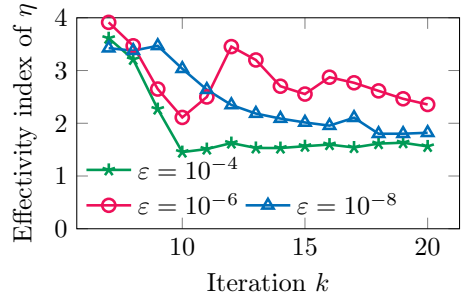

\subsection{Convection dominated problem}\label{exp:bndlayer_notsym}
In this section we perform the same experiment as in \cref{exp:bndlayer_sym} but instead of choosing $\bbeta=(0,0)^\top$ we set $\bbeta=-(1,1)^\top$, hence we solve a nonsymmetric singularly perturbed problem. The linear systems are solved with the GMRES method preconditioned with the incomplete LU factorization. As in \cref{exp:bndlayer_sym}, we investigate the effectivity indexes and efficiency of the local and classical schemes.

For convection dominated problems, the norm $\nBp{{\cdot}}$ is more appropriate than $\nB{{\cdot}}$ since it measures also the error in the advective direction. In \cref{fig:bndlayer_notsym_etatcost}, we plot the simulation cost versus the error estimator $\tilde\eta$, we remark that again the local scheme provides similar error bounds at smaller cost. The effectivity index of $\tilde\eta$ is displayed in \cref{fig:bndlayer_notsym_etateffind}, we note that the local and classical schemes have again similar effectivity indexes.
\begin{figure}
	\begin{center}
		\begin{subfigure}[t]{\subfigsize\textwidth}
			\centering
			\begin{tikzpicture}[scale=\plotimscale]
			\begin{loglogaxis}[height=\aspectratio*\plotimsized\textwidth,width=\plotimsized\textwidth, x dir=reverse,legend style={at={(1,0)},anchor=south east},
			xlabel={Error estimator $\tilde\eta$}, ylabel={GMRES cost [sec.]},log basis x={2},label style={font=\normalsize},tick label style={font=\normalsize},legend image post style={scale=\legendmarkscale},legend style={nodes={scale=\legendfontscale, transform shape},draw=none}]
			\addplot+[color=OrangeRed,mark=o,line width=\plotlinewidth pt,mark size=\plotmarksizeu pt] table [x=etatfull,y=linsolvertot,col sep=comma,select coords between index={6}{19}] 
			{data/bndlayer/SPA2FFM_sing_4_diff_6_b_1_nref_3_lay_21_a_posteriori_data.csv};\addlegendentry{Local}
			\addplot+[color=ForestGreen,mark=star,line width=\plotlinewidth pt,mark size=\plotmarksizeu pt] table[x=etatfull,y=linsolvertot,col sep=comma,select coords between index={6}{19}] 
			{data/bndlayer/SPA1_sing_4_diff_6_b_1_nref_3_lay_21_a_posteriori_data.csv};\addlegendentry{Classical}
			\end{loglogaxis}
			\end{tikzpicture}
			\caption{GMRES cost versus $\tilde\eta$.}
			\label{fig:bndlayer_notsym_etatcost}
		\end{subfigure}
		\begin{subfigure}[t]{\subfigsize\textwidth}
			\centering
			\begin{tikzpicture}[scale=\plotimscale]
			\begin{axis}[height=\aspectratio*\plotimsized\textwidth,width=\plotimsized\textwidth,legend style={at={(1,1)},anchor=north east},
			xlabel={Iteration $k$}, ylabel={Effectivity index of $\tilde\eta$},ymin=0,ymax=15,label style={font=\normalsize},tick label style={font=\normalsize},legend image post style={scale=\legendmarkscale},legend style={nodes={scale=\legendfontscale, transform shape},draw=none}]
			\addplot+[color=OrangeRed,mark=o,line width=\plotlinewidth pt,mark size=\plotmarksizeu pt] table [x=level,y=efft,col sep=comma,select coords between index={6}{19}] 
			{data/bndlayer/SPA2FFM_sing_4_diff_6_b_1_nref_3_lay_21_a_posteriori_data.csv};\addlegendentry{Local}
			\addplot+[color=ForestGreen,mark=star,line width=\plotlinewidth pt,mark size=\plotmarksizeu pt] table[x=level,y=efft,col sep=comma,select coords between index={6}{19}] 
			{data/bndlayer/SPA1_sing_4_diff_6_b_1_nref_3_lay_21_a_posteriori_data.csv};\addlegendentry{Classical}
			\end{axis}
			\end{tikzpicture}
			\caption{Effectivity index of $\tilde\eta$.}
			\label{fig:bndlayer_notsym_etateffind}
		\end{subfigure}
	\end{center}
	\caption{\Cref{exp:bndlayer_notsym}, convection dominated problem. Computational cost vs. $\tilde\eta$ and effectivity index in function of the iteration number.}
	\label{fig:bndlayer_notsym_etatcost_etateffind}
\end{figure}

In \cref{fig:bndlayer_notsym_effenerr_speedup} we plot the simulation cost versus the error in the augmented norm $\nBp{{\cdot}}$ and the relative speed-up. We again observe that the local scheme is faster. 
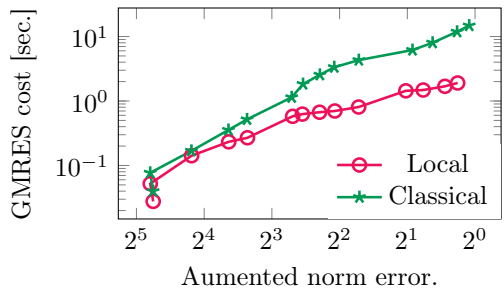
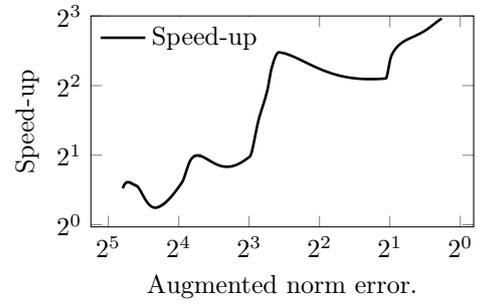
\begin{figure}
	\begin{center}
		\begin{subfigure}[t]{\subfigsize\textwidth}
			\centering
			\begin{tikzpicture}[scale=\plotimscale]
			\begin{loglogaxis}[height=\aspectratio*\plotimsized\textwidth,width=\plotimsized\textwidth, x dir=reverse,legend style={at={(1,0)},anchor=south east},
			xlabel={Aumented norm error.}, ylabel={GMRES cost [sec.]},log basis x={2},label style={font=\normalsize},tick label style={font=\normalsize},legend image post style={scale=\legendmarkscale},legend style={nodes={scale=\legendfontscale, transform shape},draw=none}]
			\addplot+[color=OrangeRed,mark=o,line width=\plotlinewidth pt,mark size=\plotmarksizeu pt] table [x=erraug,y=linsolvertot,col sep=comma,select coords between index={6}{19}] 
			{data/bndlayer/SPA2FFM_sing_4_diff_6_b_1_nref_3_lay_21_a_posteriori_data.csv};\addlegendentry{Local}
			\addplot+[color=ForestGreen,mark=star,line width=\plotlinewidth pt,mark size=\plotmarksizeu pt] table[x=erraug,y=linsolvertot,col sep=comma,select coords between index={6}{19}] 
			{data/bndlayer/SPA1_sing_4_diff_6_b_1_nref_3_lay_21_a_posteriori_data.csv};\addlegendentry{Classical}
			\end{loglogaxis}
			\end{tikzpicture}
			\caption{GMRES cost versus augmented norm error.}
			\label{fig:bndlayer_notsym_effenerr}
		\end{subfigure}
		\begin{subfigure}[t]{\subfigsize\textwidth}
			\centering
			\begin{tikzpicture}[scale=\plotimscale]
			\begin{loglogaxis}[height=\aspectratio*\plotimsized\textwidth,width=\plotimsized\textwidth,legend style={at={(0,1)},anchor=north west},
			xlabel={Augmented norm error.}, ylabel={Speed-up}, x dir=reverse,log basis x={2},log basis y={2},ymax=8,label style={font=\normalsize},tick label style={font=\normalsize},legend image post style={scale=\legendmarkscale},legend style={nodes={scale=\legendfontscale, transform shape},draw=none}]
			\addplot+[color=black,line width=\plotlinewidth pt,mark=none] table[x=erraug,y=speeden,col sep=comma] 
			{data/bndlayer/speedup_sing_4_diff_6_b_1_nref_3_lay_21.csv};\addlegendentry{Speed-up}
			\end{loglogaxis}
			\end{tikzpicture}
			\caption{Speed-up in function of the error.}
			\label{fig:bndlayer_notsym_speedup}
		\end{subfigure}
	\end{center}
	\caption{\Cref{exp:bndlayer_notsym}, convection dominated problem. Computational cost vs. augmented norm error and speed-up in function of the error.}
	\label{fig:bndlayer_notsym_effenerr_speedup}
\end{figure}

For completeness, we plot in \cref{fig:bndlayer_notsym_etaeffind} the effectivity index of $\eta$. We see that it is completely off. This illustrates that this estimator does not capture the convective error and is hence not appropriate for convection dominated problems. Then, we run again the same experiment but considering different diffusion coefficients $\varepsilon=10^{-4}, 10^{-6}, 10^{-8}$ and display the effectivity indexes of $\tilde\eta$ in \cref{fig:bndlayer_notsym_diff_eps}.
\begin{figure}
	\begin{center}
		\begin{subfigure}[t]{\subfigsize\textwidth}
			\centering
			\begin{tikzpicture}[scale=\plotimscale]
			\begin{axis}[height=\aspectratio*\plotimsized\textwidth,width=\plotimsized\textwidth,legend style={at={(1,1)},anchor=north east},
			xlabel={Iteration $k$}, ylabel={Effectivity index of $\eta$},ymin=0,ymax=800,,label style={font=\normalsize},tick label style={font=\normalsize},legend image post style={scale=\legendmarkscale},legend style={nodes={scale=\legendfontscale, transform shape},draw=none}]
			\addplot[color=OrangeRed,mark=o,line width=\plotlinewidth pt,mark size=\plotmarksizeu pt] table [x=level,y=eff,col sep=comma,select coords between index={6}{19}] 
			{data/bndlayer/SPA2FFM_sing_4_diff_6_b_1_nref_3_lay_21_a_posteriori_data.csv};\addlegendentry{Local}
			\addplot[color=ForestGreen,mark=star,line width=\plotlinewidth pt,mark size=\plotmarksizeu pt] table[x=level,y=eff,col sep=comma,select coords between index={6}{19}] 
			{data/bndlayer/SPA1_sing_4_diff_6_b_1_nref_3_lay_21_a_posteriori_data.csv};\addlegendentry{Classical}
			\end{axis}
			\end{tikzpicture}
			\caption{Effectivity index of $\eta$.}
			\label{fig:bndlayer_notsym_etaeffind}
		\end{subfigure}\hfill
		\begin{subfigure}[t]{\subfigsize\textwidth}
			\centering
			\begin{tikzpicture}[scale=\plotimscale]
			\begin{axis}[height=\aspectratio*\plotimsized\textwidth,width=\plotimsized\textwidth,legend columns=2, ,legend style={at={(1,1)},anchor=north east,draw=none,fill=none},
			xlabel={Iteration $k$}, ylabel={Effectivity index of $\tilde\eta$},ymin=0,ymax=20,label style={font=\normalsize},tick label style={font=\normalsize},legend image post style={scale=\legendmarkscale},legend style={nodes={scale=\legendfontscale, transform shape},draw=none}]
			\addplot[color=ForestGreen,mark=star,line width=\plotlinewidth pt,mark size=\plotmarksizeu pt] table[x=level,y=efft,col sep=comma,select coords between index={6}{19}] 
			{data/bndlayer/SPA2FFM_sing_4_diff_4_b_1_nref_3_lay_21_a_posteriori_data.csv};\addlegendentry{$\varepsilon=10^{-4}$}
			\addplot[color=OrangeRed,mark=o,line width=\plotlinewidth pt,mark size=\plotmarksizeu pt] table [x=level,y=efft,col sep=comma,select coords between index={6}{19}] 
			{data/bndlayer/SPA2FFM_sing_4_diff_6_b_1_nref_3_lay_21_a_posteriori_data.csv};\addlegendentry{$\varepsilon=10^{-6}$}
			\addlegendimage{empty legend}\addlegendentry{}
			\addplot[color=NavyBlue,mark=triangle,line width=\plotlinewidth pt,mark size=\plotmarksizeu pt] table[x=level,y=efft,col sep=comma,select coords between index={6}{19}]  	
			{data/bndlayer/SPA2FFM_sing_4_diff_8_b_1_nref_3_lay_21_a_posteriori_data.csv};\addlegendentry{$\varepsilon=10^{-8}$}
			\end{axis}
			\end{tikzpicture}
			\caption{Effectivity index of $\tilde\eta$ for different diffusion coefficients $\varepsilon$.}
			\label{fig:bndlayer_notsym_diff_eps}
		\end{subfigure}
	\end{center}
	\caption{\Cref{exp:bndlayer_notsym}, convection dominated problem. Effectivity index of $\eta$ and of $\tilde\eta$ but for different diffusion coefficients $\varepsilon$.}
	\label{fig:bndlayer_notsym_etaeffind_diff_eps}
\end{figure}
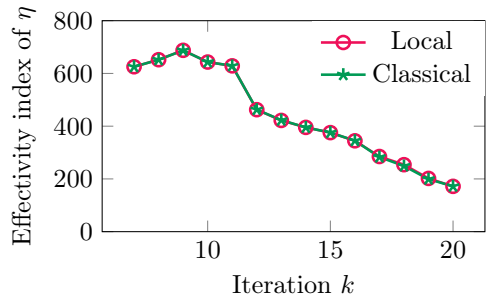
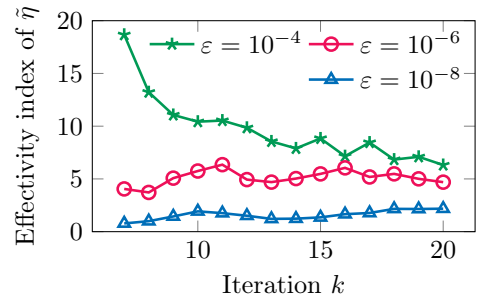

\subsection{A nonlinear nonsmooth problem with multiple local structures}\label{exp:nonlin}
We conclude with an experiment on a nonlinear nonsmooth problem, where the diffusion tensor is solution dependent and has multiple discontinuities, hence the solution presents several local structures. More precisely, we solve \cref{eq:elliptic} with $\Omega=[-3/2,3/2]\times [-3/2,3/2]$, $\bbeta=-(1,1)^\top$, $\mu=1$ and $f(\bx)=\nld{\bx}^2$. The diffusion tensor is $A(u,\bx)=A_1(u)A_2(\bx)$, with $A_1(u)=1/\sqrt{1+u^2}$. We divide $\Omega$ in nine squares of size $1/2\times 1/2$ and $A_2(\bx)$ alternates between $1$ and $0.01$, in a checkerboard-like manner. A reference solution is displayed in \cref{fig:nonlinsol}.

\cref{thm:energynormbound,thm:augmentednormbound} do not apply straightforwardly as the problem is nonlinear. Nevertheless, \cref{alg:local} can be used in combination with a Newton scheme as it is shown in \cite{AbR19}. In this experiment we investigate the efficiency of the error estimators in identifying the local subdomains for a nonlinear nonsmooth problem with multiple local structures. Starting with a $32\times 32$ elements mesh, we run the code and let it automatically select the subdomains for twenty iterations. We do the same with the classical \cref{alg:classical} and compare the results in \cref{fig:nonlin_eff}, where we display the cost of the Newton method versus the error, computed in energy norm, against a reference solution. We remark as the local method is faster.
 
\begin{figure}
	\begin{center}
		\begin{subfigure}[t]{\subfigsize\textwidth}
			\centering
			\includegraphics[trim=0cm 0cm 0cm 0cm, clip, width=0.7\textwidth]{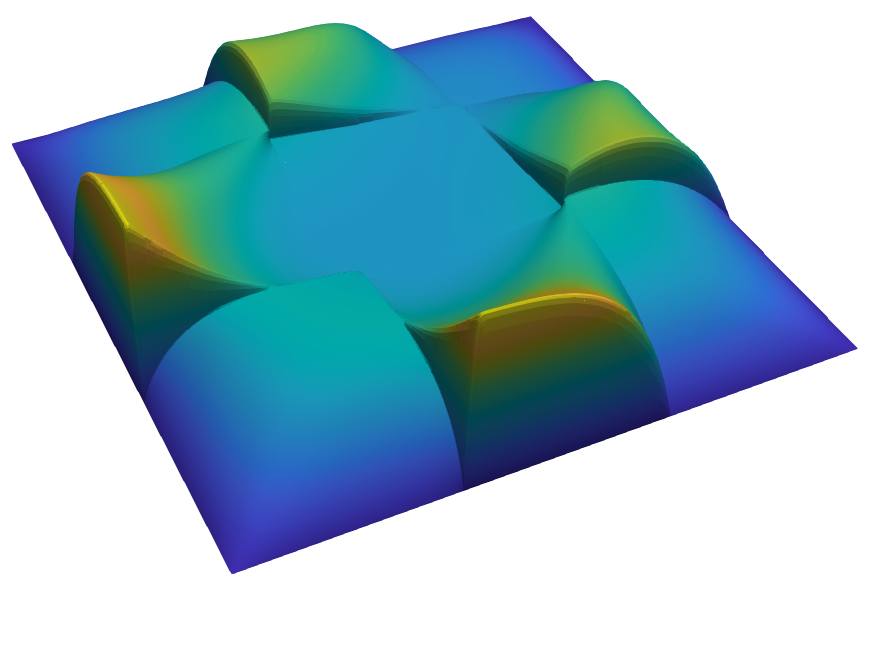}
			\caption{Solution $u(\bx)$ of the nonlinear nonsmooth problem.}
			\label{fig:nonlinsol}
		\end{subfigure}
		\begin{subfigure}[t]{\subfigsize\textwidth}
			\centering
			\begin{tikzpicture}[scale=\plotimscale]
				\begin{loglogaxis}[height=\aspectratio*\plotimsized\textwidth,width=\plotimsized\textwidth, x dir=reverse,legend style={at={(1,0)},anchor=south east,fill=none,draw=none},
					xlabel={Energy norm error.}, ylabel={Newton cost [sec.]},log basis x={2},label style={font=\normalsize},tick label style={font=\normalsize},legend image post style={scale=\legendmarkscale},legend style={nodes={scale=\legendfontscale, transform shape},draw=none}]
					\addplot[color=OrangeRed,mark=o,line width=\plotlinewidth pt,mark size=\plotmarksizeu pt] table [x=erren,y=linsystemtot,col sep=comma] 
					{data/nonlin/spa_2_nref_5_nlev_20_nlay1_2_nlay2_2_a_posteriori_data.csv};\addlegendentry{Local}
					\addplot[color=ForestGreen,mark=star,line width=\plotlinewidth pt,mark size=\plotmarksizeu pt] table[x=erren,y=linsystemtot,col sep=comma] 
					{data/nonlin/spa_1_nref_5_nlev_20_a_posteriori_data.csv};\addlegendentry{Classical}
				\end{loglogaxis}
			\end{tikzpicture}
			\caption{Newton cost versus energy norm error.}
			\label{fig:nonlin_eff}
		\end{subfigure}
	\end{center}
	\caption{Solution $u(\bx)$ and efficiency experiment on the nonlinear nonsmooth problem of \cref{exp:nonlin}.}
\end{figure}

\section{Conclusion}
In this paper we have derived a local adaptive discontinuous Galerkin method for the scheme introduced in \cite{AbR19}. The scheme, defined in \cref{sec:localg}, relies on a coarse solution which is successively improved by solving a sequence of localized elliptic problems in confined subdomains, where the mesh is refined. Starting from error estimators for the symmetric weighted interior penalty Galerkin scheme based on conforming potential and fluxes reconstructions, allowing for flux jumps across the subdomains boundaries we have derived new estimators for the local method and proved their reliability in \cref{thm:energynormbound,thm:augmentednormbound}. An important property of the original estimators (for nonlocal schemes) is conserved: the absence of unknown constants.
Numerical experiments confirm the error estimators' effectivity for singularly perturbed convection-reaction dominated problems and illustrate the efficiency of the local scheme when compared to a classical adaptive algorithm, where at each iteration the solution on the whole computational domain must be recomputed. We also showed that the growth of boundary error indicators (the reason why efficiency cannot be proved in general) can be monitored in order to switch from local to a nonlocal method. Switching automatically from local to classical scheme, based on the indicators $\eta_{\Gamma,1}$, $\eta_{\Gamma,2}$, could be easily integrated in a finite element code. Testing such an integrated code could be of interest to investigate in the future.

\section*{Acknowledgments} The authors are partially supported by the Swiss National Science Foundation, under grant No. $200020\_172710$.


\end{document}